\titlespacing*{\subsection}{0pt}{5pt}{5pt}
\titlespacing*{\subsubsection}{0pt}{5pt}{5pt}
\setlist[itemize]{topsep=1pt,itemsep=-1ex}
\setlist[enumerate]{topsep=1pt,itemsep=-1ex}
\numberwithin{equation}{section}
\newcommand{\widebar}[1]{\mkern 1mu\overline{\mkern-1mu#1\mkern-1mu}\mkern 1mu}
\newcommand{\<}{\mspace{1mu}}
\DeclareMathOperator*{\argmin}{arg\,min}
\DeclareMathOperator*{\diag}{diag}
\newcommand{\indicate}[1]{\mathbf{1}\Bigl\{#1\Bigr\}}
\newtheorem{theorem}{Theorem}
\newtheorem{proposition}{Proposition}
\newtheorem{lemma}{Lemma}
\newtheorem{assumption}{Assumption}
\newtheorem{definition}{Definition}
\newlength\mylen
\newcommand\myinput[1]{%
  \settowidth\mylen{\KwIn{}}%
  \setlength\hangindent{\mylen}%
  \hspace*{\mylen}#1\\}
\begin{document}
\setlength{\abovedisplayskip}{6pt}
\setlength{\belowdisplayskip}{6pt}

\title{\textbf{\Large Risk-Averse Approximate Dynamic Programming with Quantile-Based Risk Measures}}
\author{Daniel R. Jiang and Warren B. Powell}

\maketitle
\begin{abstract}
In this paper, we consider a finite-horizon Markov decision process (MDP) for which the objective at each stage is to minimize a quantile-based risk measure (QBRM) of the sequence of future costs; we call the overall objective a \emph{dynamic quantile-based risk measure} (DQBRM). In particular, we consider optimizing dynamic risk measures where the one-step risk measures are QBRMs, a class of risk measures that includes the popular \emph{value at risk} (VaR) and the \emph{conditional value at risk} (CVaR). Although there is considerable theoretical development of risk-averse MDPs in the literature, the computational challenges have not been explored as thoroughly. We propose data-driven and simulation-based approximate dynamic programming (ADP) algorithms to solve the risk-averse sequential decision problem. We address the issue of inefficient sampling for risk applications in simulated settings and present a procedure, based on importance sampling, to direct samples toward the ``risky region'' as the ADP algorithm progresses. Finally, we show numerical results of our algorithms in the context of an application involving risk-averse bidding for energy storage.
\end{abstract}


\section{Introduction}
Sequential decision problems, in the form of Markov decision processes (MDPs), are most often formulated with the objective of minimizing an expected sum of costs or maximizing an expected sum of rewards \citep{Puterman,Bertsekas1996,Powell2011}. However, it is becoming more and more evident that solely considering the expectation is insufficient as risk-preferences can vary greatly from application to application. Broadly speaking, the expected value can fail to be useful in settings containing either heavy-tailed distributions or rare, but high-impact events. For example, heavy-tailed distributions arise frequently in finance (electricity prices are well-known to possess this feature; see \cite{Bystrom2005}, \cite{Kim2011c}). In this case, the mean of the distribution itself may not necessarily be a good representation of the randomness of the problem; instead, it is likely useful to introduce a measure of risk on the tail of the distribution as well. The rare event situation is, in a sense, the inverse case of the heavy-tail phenomenon, but it can also benefit from a risk measure other than the expectation. To illustrate, certain problems in operations research can be complicated by critical events that happen with small probability, such as guarding against stock-outs and large back-orders in inventory problems (see \cite{Glasserman1996}) or managing the risk of the failure of a high-value asset (see \cite{Enders2010a}). In these circumstances, the merit of a policy might be measured by the number of times that a \emph{bad event} happens over some time horizon. 

One way to introduce risk-aversion into sequential problems is to formulate the objective using \emph{dynamic risk measures} \citep{Ruszczynski2010}. A rough preview, without formal definitions, of our optimization problem is as follows: we wish to find a policy that minimizes risk, as assessed by a certain type of dynamic risk measure. The objective can be written as
\begin{equation*}
\min_{\pi \in \Pi} \; \rho_0^\alpha \Bigl ( C_1^\pi + \rho_1^\alpha \bigl( C_2^\pi + \cdots + \rho_{T-1}^\alpha( C_T^\pi ) \cdots \bigr) \Bigr),
\end{equation*}
where $\Pi$ is a set of policies, $\{C_t^\pi\}$ are costs under policy $\pi$, and $\{\rho_t^\alpha\}$ are one-step risk measures (i.e., components of the overall dynamic risk measure). Precise definitions are given in the subsequent sections. We focus on the case where the objective at each stage is to optimize a \emph{quantile-based risk measure} (QBRM) of future costs; we call the overall objective a \emph{dynamic quantile-based risk measure} (DQBRM).

This paper makes the following contributions. First, we describe a new data-driven or simulation-based ADP algorithm, called \emph{Dynamic-QBRM ADP}, that is similar in spirit to established asynchronous algorithms like $Q$-learning (see \cite{Watkins1992}) and lookup table approximate value iteration (see, e.g., \cite{Bertsekas1996}, \cite{Powell2011}), where one state is updated per iteration. The second contribution of the paper is a companion sampling procedure to Dynamic-QBRM ADP, which we call \emph{risk-directed sampling} (RDS). As we describe above, when dealing with risk, there is a large class of problems in which we are inherently dealing with rare, but very costly events. Broadly speaking, the evaluation of a QBRM that is focused on the tail of the distribution (e.g., CVaR at, say, the 99\% level) depends crucially on efficiently directing the algorithm toward sampling these ``risky'' regions. In this part of the paper, we consider the question: is there a way to learn, as the ADP algorithm progresses, the interesting values of the information process to sample?

The paper is organized as follows. We first provide a literature review in Section \ref{sec:litreview}. In Section \ref{sec:problem}, we give our problem formulation, a brief introduction to dynamic risk measures, and the definition of a class of quantile-based risk measures. Next, we introduce the algorithm for solving risk-averse MDPs in Section \ref{sec:algorithm} and give a theoretical analysis in Section \ref{sec:algconv}. In Section \ref{sec:sampling}, we discuss sampling issues and describe the companion sampling procedure. We show numerical results on an example energy trading application in Section \ref{sec:numerical} and conclude in Section \ref{sec:conclusion}.

\section{Literature Review}
\label{sec:litreview}
The theory of \emph{dynamic risk measures} and the notion of \emph{time-consistency} (see e.g. \cite{Riedel2004}, \cite{Artzner2006}, \cite{Cheridito2006}) is extended to the setting of sequential optimization problems in \cite{Ruszczynski2006} and \cite{Ruszczynski2010}, in which it is proved that any time-consistent dynamic risk measure can be written as compositions of \emph{one-step conditional risk measures} (these are simply risk measures defined in a conditional setting, analogous to the conditional expectation for the traditional case). From this, a Bellman recursion is obtained, becoming a familiar way of characterizing optimal policies. Building on the theory of dynamic programming, versions of exact value iteration and policy iteration are also developed in \cite{Ruszczynski2010}. Later, in \cite{Cavus2014a}, these exact methods are analyzed in the more specific case of undiscounted transient models.

Under the assumption that we use one-step \emph{coherent risk measures}, as axiomatized in \cite{Artzner1999}, the value functions of a risk-averse Markov decision process with a convex terminal value function can be easily shown to satisfy convexity using the fact that coherent risk measures are convex and monotone. Therefore, the traditional method of stochastic dual dynamic programming (SDDP) of \cite{Pereira1991} for multistage, risk-neutral problems, which relies on the \emph{convexity} of value functions, can be adapted to the risk-averse case. This idea is successfully explored in \cite{Philpott2012}, \cite{Shapiro2013}, and \cite{Philpott2013}, with applications to the large-scale problem of hydro-thermal scheduling using one-step mean-CVaR (convex combination of mean and CVaR) and one-step mean-upper semideviation risk measures. The main drawbacks of risk-averse SDDP are (1) the cost function must be linear in the state, (2) some popular risk measures, such as \emph{value at risk} (VaR), are excluded because they are not coherent and do not imply convex value functions, and (3) the full risk measure (can be recast as an expectation in certain instances) has to be computed at every iteration.
Since no convexity or linearity assumptions are made in this paper, we take an alternative approach from the SDDP methods and instead assume the setting of finite state and action spaces, as in $Q$-learning. At the same time, because the default implementation of our approach does not take advantage of structure, it is limited to smaller problems. Extensions to the methods proposed in this paper for exploiting structure can be made by following techniques such as those discussed in \cite{Powell2004}, \cite{Nascimento2009a}, and \cite{Jiang2013}.


Recursive stochastic approximation methods have been applied to estimating quantiles in static settings (see \cite{Tierney1983}, \cite{Bardou2009}, and \cite{Kan2011}). Related to our work, a policy gradient method for optimizing MDPs (with a risk-neutral objective) under a CVaR constraint is given in \cite{Chow2014}. All of these methods are related to ours in the sense that the minimization formula \citep[Theorem 10]{Rockafellar2002} for CVaR is optimized with gradient techniques. In our multistage setting with dynamic risk measures, which is also coupled with optimal control, there are some new interesting complexities, including the fact that every new observation (or data point) is generated from an \emph{imperfect distribution} of future costs that is ``bootstrapped'' from the previous estimate of the value function. This means that not only are the observations inherently biased, but the errors compound over time -- this was not the case for the static setting considered in earlier work. Under reasonable assumptions, we analyze both the almost sure convergence and convergence rates for our proposed algorithms.

Our risk-directed sampling procedure is inspired by adaptive importance sampling strategies from the literature, such as the celebrated \emph{cross-entropy method} of \cite{Rubinstein1999}. See, e.g., \cite{Al-Qaq1995}, \cite{Bardou2009}, \cite{Egloff2010}, and \cite{Ryu2015} for other similar approaches. The critical difference in our approach is that in an ADP setting, we have the added difficulty of not being able to assume perfect knowledge of the objective function; rather, our observations are noisy and biased. To our knowledge, this is the first time an adaptive sampling procedure has been combined with a value function approximation algorithm in the risk-averse MDP literature. The closest paper is by \cite{Kozmik2014}, which considers an importance sampling approach for policy evaluation.

\section{Problem Formulation}
\label{sec:problem}
In this section, we establish the setting of the paper. In particular, we describe the risk-averse model, introduce the concept of dynamic risk measures, and define a class of quantile-based risk measures. 
\subsection{Model}
We consider an MDP with a finite time-horizon,  $t=0, 1, 2, \ldots, T$, where the last decision is made at time $t=T-1$, so that the set of decision epochs is given by $\mathcal T = \{0,1,2,\ldots, T-1\}.$ Given a probability space $(\Omega, \mathcal F, \mathbf{P})$, we define a discrete-time stochastic process $\{W_t\}_{t=0}^T$, with $W_t \in \mathcal W$ for all $t$, as the exogenous information process in the sequential decision problem, where $W_t$ is adapted to a filtration $\{\mathcal F_t\}_{t=0}^T$, with $\{\varnothing,\Omega\} = \mathcal F_0 \subseteq \mathcal F_1 \subseteq \cdots \subseteq \mathcal F_T \subseteq \mathcal F$. We assume that all sources of randomness in the problem are encapsulated by the process $\{W_t\}$ and that it is independent across time. For computational tractability, we work in the setting of finite state and action spaces. Let the state space be denoted $\mathcal{S}$, and let the action space be $\mathcal A$, where $|\mathcal S| < \infty$ and $|\mathcal A| < \infty$. The set of feasible actions for each state $s \in \mathcal S$, written $\mathcal A_s$, is a subset of $\mathcal A$. The set $\mathcal U = \{(s,a) \in  \mathcal S \times \mathcal A: a \in \mathcal A_s\}$ is the set of all feasible state-action pairs.
The stochastic process describing the states of the system is $\{S_t\}_{t=0}^T$, where $S_t$ is an $\mathcal F_t$-measurable random variable taking values in $\mathcal S$, and $a_t$ is a feasible action determined by the decision maker using $S_t$. Furthermore, let $\mathcal Z_t$ denote the space of $\mathcal F_t$-measurable random variables and $\mathcal Z_{t,T} = \mathcal Z_t \times \cdots \times \mathcal Z_T$.

We model the system using a \emph{transition function} or \emph{system model} $S^M:\mathcal S \times \mathcal A \times \mathcal W \rightarrow \mathcal S$, which produces the next state $S_{t+1}$ given a current state $S_t$, action $a_t$, and an outcome of the exogenous process $W_{t+1}$: $S_{t+1} = S^M(S_t, a_t, W_{t+1})$. The \emph{cost} for time $t$ is given by $c_t(S_t,a_t,W_{t+1})$, where $c_t:\mathcal S \times \mathcal A \times \mathcal W \rightarrow \mathbb R$ is the \emph{cost function}. A policy is a sequence of decision functions $\{A_0^\pi, A_1^\pi, \ldots, A_{T-1}^\pi\}$ indexed by $\pi \in \Pi$, where $\Pi$ is the index set of all policies. Each decision function $A_t^\pi : \mathcal S \rightarrow \mathcal A$ is a mapping from a state to a feasible action, such that $A_t^\pi(s) \in \mathcal A_s$ for any state $s$. Let the sequence of costs under a policy $\pi$ be represented by the process $C_t^\pi$ for $t=1,2,\ldots,T$, where
\[
C_{t}^\pi = c_{t-1}(S_{t-1}^\pi, A_{t-1}^\pi(S_{t-1}^\pi), W_{t}) \in \mathcal Z_t,
\]
where $\{S_t^\pi\}$ are the states visited while following policy $\pi$.
Note that $C_t^\pi$ refers to the cost from time $t-1$, but the index of $t$ refers to its measurability: $C_t^\pi$ depends on information only known at time $t$.

\subsection{Review of Dynamic Risk Measures} In this subsection, we briefly introduce the notion of a dynamic risk measure; for a more detailed treatment, see, e.g., \cite{Frittelli2004}, \cite{Riedel2004}, \cite{Pflug2005}, \cite{Boda2006}, \cite{Cheridito2006}, and \cite{Acciaio2011}. Our presentation closely follows that of \cite{Ruszczynski2010}, which develops the theory of dynamic risk measures in the context of MDPs. First, a \emph{conditional risk measure} is a mapping $\rho_{t,T}: \mathcal Z_{t,T} \rightarrow \mathcal Z_t$ that satisfies the following monotonicity requirement: for $X, Y \in \mathcal Z_{t,T}$ and $X \le Y$ (componentwise and almost surely), $\rho_{t,T}(X) \le \rho_{t,T}(Y)$.

Given a sequence of future costs $C_t, \ldots, C_T$, the intuitive meaning of $\rho_{t,T}(C_t,\ldots,C_T)$ is a \emph{certainty equivalent} cost (i.e., at time $t$, one is indifferent between incurring $\rho_{t,T}(C_t,\ldots,C_T)$ and the alternative of being subjected to the stream of stochastic future costs). See \cite{Rudloff2014} for an in-depth discussion regarding the certainty equivalent interpretation in the context of multistage stochastic models. A \emph{dynamic risk measure} $\{\rho_{t,T}\}^T_{t=0}$ is a sequence of conditional risk measures $\rho_{t,T}:\mathcal Z_{t,T} \rightarrow \mathcal Z_t$, which allows us to evaluate the future risk at any time $t$ using $\rho_{t,T}$. Of paramount importance to the theory of dynamic risk measures is the notion of \emph{time-consistency}, which says that if from the perspective of some future time $\tau$, one sequence of costs is riskier than another and the two sequences of costs are identical from the present until $\tau$, then the first sequence is also riskier from the present perspective (see \cite{Ruszczynski2010} for the full technical definition).

Other definitions of time-consistency can be found in the literature, e.g., \cite{Boda2006}, \cite{Cheridito2009a}, and \cite{Shapiro2009}. Though they may differ technically, these definitions share the same intuitive spirit. Under the conditions:
\begin{equation*}
\rho_{t,T}(0,\ldots,0) = 0 \quad \textnormal{and} \quad \rho_{t,T}(C_t,C_{t+1},\ldots,C_T) = C_t + \rho_{t,T}(0,C_{t+1},\ldots,C_T),
\end{equation*}
it is proven in \cite{Ruszczynski2010} that for some \emph{one-step conditional risk measures} $\rho_t:\mathcal Z_{t+1} \rightarrow \mathcal Z_t$, a time-consistent, dynamic risk measure $\{\rho_{t,T}\}^T_{t=0}$ can be expressed using the following nested representation:
\[
\rho_{t,T}(C_t,\ldots,C_T) = C_t + \rho_t \bigl( C_{t+1} + \rho_{t+1}( C_{t+2} + \cdots+ \rho_{T-1}(C_T) \cdots )\bigr),
\]
It is thus clear that we can take the reverse approach and define a time-consistent dynamic risk measure by simply specifying a set of one-step conditional risk measures $\{\rho_t\}_{t=0}^T$. This is a common method that has been used in the literature when applying the theory of dynamic risk measures in practice (see, e.g., \cite{Philpott2012}, \cite{Philpott2013}, \cite{Shapiro2013}, \cite{Kozmik2014}, and \cite{Rudloff2014}).

\subsection{Quantile-Based Risk Measures} In this paper, we focus on simulation techniques where the one-step conditional risk measure belongs to a specific class of risk measures called \emph{quantile-based risk measures} (QBRM). Although the term \emph{quantile-based risk measure} has been used in the literature to refer to risk measures that are similar in spirit to VaR and CVaR (see, e.g., \cite{Dowd2006}, \cite{Neise2008}, {\cite{Sereda2010}), it has not been formally defined. First, let us describe these two popular risk measures, which serve to motivate a more general definition for a QBRM.

Also known as the \emph{quantile risk measure}, VaR is a staple of the financial industry (see, e.g., \cite{Duffie1997}). Given a real-valued random variable $X$ (representing a loss) and a risk level $\alpha \in (0,1)$, the \emph{VaR} or \emph{quantile} of $X$ is defined to be
\[
\textnormal{VaR}^\alpha(X) = q^\alpha(X) = \inf_{u} \bigl\{ \mathbf{P} (X \le u) \ge \alpha  \bigr\}.
\]
To simplify our notation, we use $q^\alpha(X)$ in the remainder of this paper. It is well known that VaR does not satisfy \emph{coherency} \cite{Artzner1999}, specifically the axiom of \emph{subadditivity}, an appealing property that encourages diversification. Despite this, several authors have given arguments in favor of VaR. For example, \cite{Danielsson2005} concludes that in practical situations, VaR typically exhibits subadditivity. \cite{Dhaene2006} and \cite{Ibragimov2007} give other points of view on why VaR should not be immediately dismissed as an effective measure of risk. A nested version of VaR for use in a multistage setting is proposed in \cite{Cheridito2009a}, though practical implications have not been explored in the literature. 

CVaR is a coherent alternative to VaR and has been both studied and applied extensively in the literature. Although the precise definitions may slightly differ, CVaR is also known by names such as \emph{expected shortfall}, \emph{average value at risk}, or \emph{tail conditional expectation}. 
Given a general random variable $X$, the following characterization is given in \cite{Rockafellar2002}:
\[
\quad \quad \quad \textnormal{CVaR}^\alpha(X) = \inf_{u} \Bigl\{ u + \frac{1}{1-\alpha} \, \mathbf{E}\bigl[ (X-u)^+ \bigr] \Bigr\} = q^\alpha(X) + \frac{1}{1-\alpha} \, \mathbf{E}\Bigl[ \bigl(X-q^\alpha(X)\bigr)^+ \Bigr].
\]
Applications of risk-averse MDPs using dynamic risk measures have largely focused on combining CVaR with expectation; once again, see \cite{Philpott2012}, \cite{Philpott2013}, \cite{Shapiro2013}, \cite{Kozmik2014}, and \cite{Rudloff2014}.

For the purposes of this paper, we offer the following general definition of a QBRM that allows dependence on more than one quantile; the definition includes the above two examples as special cases.
\begin{definition}[Quantile-Based Risk Measure (QBRM)]
Let $X$ be a real-valued random variable. A \emph{quantile-based risk measure} $\rho^\alpha$ can be written as the expectation of a function of $X$ and finitely many of its quantiles. More precisely, $\rho^\alpha$ takes the form
\begin{equation}
\rho^\alpha(X) = \mathbf{E} \Bigl[ \Phi\bigl(X,q^{\alpha_1}(X),q^{\alpha_2}(X),\ldots,q^{\alpha_m}(X) \bigr)   \Bigr],
\label{eq:rhoq}
\end{equation}
where $\alpha \in \mathbb R^m$ is a vector of $m$ risk levels, $\alpha_1$, $\alpha_2$, \ldots, $\alpha_m$, and a function $\Phi: \mathbb R^{m+1} \rightarrow \mathbb R$, chosen so that $\rho^\alpha$ satisfies monotonicity, translation invariance, and positive homogeneity (see \cite{Artzner1999} for the precise definitions and note that we interpret $X$ as a random loss or a cost).
\label{defn:qbrm}
\end{definition}
Our definition of QBRMs is largely motivated by practical considerations. First, the definition covers the two most widely used risk measures, VaR and CVaR, as special cases under a single framework; in addition, the flexibility allows for the specification of more sophisticated risk measures that may or may not be coherent. As previously mentioned, there are situations where nonconvex (and thus, not coherent) risk measures are appropriate \citep{Dhaene2006}. Another motivation for this definition of a QBRM is that it allows us to easily construct a risk measure such that $ \textnormal{VaR}^\alpha(X) \le \rho^\alpha(X) \le \textnormal{CVaR}^\alpha(X)$, because, as \cite{Belles-Sampera2014} points out, one issue with VaR is that it can underestimate large losses, but at the same time, some practitioners of the financial and insurance industries find $\textnormal{CVaR}$ to be too conservative.

We see that VaR is trivially a QBRM with $\Phi(X,q)=q$. CVaR can also be easily written as a QBRM, using the function $\Phi(X,q) = q+\frac{1}{1-\alpha} \, (X-q)^+$.
Although our approach can be applied to any risk measure of the form (\ref{eq:rhoq}), we use the CVaR risk measure in the empirical work of Section \ref{sec:numerical}, due to its popularity in a variety of application areas.

\subsection{Dynamic Quantile-Based Risk Measures}
Notice that, so far, we have developed QBRMs in a ``static'' setting (the value of the risk measure is in $\mathbb{R}$) for simplicity. Given a random variable $X \in \mathcal Z_{t+1}$ and a risk level $\alpha \in (0,1)$, the conditional counterpart for the quantile is
\[
q_t^\alpha(X) = \inf_{U \in \mathcal Z_t} \bigl\{ \mathbf{P} \bigl(X \le U \,|\, \mathcal F_t \bigr) \ge \alpha  \bigr\} \in \mathcal Z_t.
\] 
Using this new definition, we can similarly extend the definition of a QBRM to the conditional setting by replacing (\ref{eq:rhoq}) with
\begin{equation*}
\rho_t^\alpha(X) = \mathbf{E} \Bigl[ \Phi\bigl(X,q_t^{\alpha_1}(X),q_t^{\alpha_2}(X),\ldots,q_t^{\alpha_m}(X) \bigr) \, \bigr | \, \mathcal F_t  \Bigr],
\end{equation*}
and replacing the required properties of monotonicity, translation invariance, and positive homogeneity in Definition \ref{defn:qbrm} with their conditional forms given in \cite{Ruszczynski2010} (denoted therein by A2, A3, and A4).
For the sake of notational simplicity, let us assume that all parameters, i.e., $m$, $\alpha_1, \ldots, \alpha_m$, $\Phi$ are static over time, but we remark that an extension to time-dependent (and even state-dependent) versions of the one-step conditional risk measure is possible.
Let $\tilde{\rho}_t^{\alpha}$ be a (conditional) QBRM that measures tail risk. In applications, a weighted combination of a tail risk measure with the traditional expectation ensures that the resulting policies are not driven completely by the tail behavior of the cost distribution; we may use QBRMs of the form $\rho_t^{\alpha} (X) = (1-\lambda) \, \mathbf{E} \bigl[ X \,|\, \mathcal F_t  \bigr] + \lambda \, \tilde{\rho}_t^{\alpha}(X)$, where $\lambda \in [0,1]$.

Using one-step conditional risk measures as building blocks, we can define a dynamic risk measure to be $\{\rho_t^\alpha\}_{t=0}^T$, which we refer to as a \emph{dynamic quantile-based risk measure} (DQBRM). The dynamic risk measures obtained when $\rho_t^\alpha = \textnormal{VaR}_t^\alpha$ and $\rho_t^\alpha = \textnormal{CVaR}_t^\alpha$ (the conditional forms of VaR and CVaR) are precisely the time-consistent risk measures suggested in \cite{Cheridito2009a} under the names \emph{composed value at risk} and \emph{composed conditional value at risk}. 


\subsection{Objective Function} We are interested in finding optimal risk-averse policies under objective functions specified using a DQBRM. The problem is 
\begin{equation}
\min_{\pi \in \Pi} \; \rho_0^\alpha \Bigl ( C_1^\pi + \rho_1^\alpha \bigl( C_2^\pi + \cdots + \rho_{T-1}^\alpha( C_T^\pi ) \cdots \bigr) \Bigr).
\label{eq:rmdp}
\end{equation}
The upcoming theorem, proven in \cite{Ruszczynski2010}, gives the Bellman-like optimality equations for a risk-averse model. We state it under the assumption that the current period contribution is random, differing slightly from the original statement. A point of clarification: the original theorem is proved within the setting where the one-step risk measures satisfy conditional forms of the axioms of \cite{Artzner1999} for coherent risk measures. In our setting, however, the QBRM $\rho_t^{\alpha}$ is only assumed to satisfy (conditional forms of) monotonicity, positive homogeneity, and translation invariance, but not necessarily convexity. The crucial step of the proof given in \cite{Ruszczynski2010} relies only on monotonicity and an associated \emph{interchangeability} property (see \cite[Theorem 7.1]{Ruszczynski2006b}, \cite[Theorem 2]{Ruszczynski2010}). The assumption of convexity is therefore not necessary for the following theorem.
\begin{theorem}[Bellman Recursion for Dynamic Risk Measures, \cite{Ruszczynski2010}]
The sequential decision problem (\ref{eq:rmdp}) has optimal value functions given by
\begin{align*}
V_t^*(s) &= \min_{a_t \in \mathcal A_s} \rho_t^\alpha \left(c_t(s, a_t, W_{t+1}) + V_{t+1}^*(S_{t+1}) \right) \textnormal{ for all } s \in \mathcal S, \; t \in \mathcal T,\label{eq:bellman}\\
V_T^*(s) &= 0 \textnormal{ for all } s \in \mathcal S.
\end{align*}
The decision functions of an optimal policy $\pi^*$ are given by
\begin{align*}
A_t^{\pi^*}\!(s) \in \argmin_{a_t \in \mathcal A_s} \rho_t^\alpha \left(c_t(s, a_t, W_{t+1}) + V_{t+1}^*(S_{t+1}) \right) \textnormal{ for all } s \in \mathcal S, \; t \in \mathcal T,
\end{align*}
which map to a minimizing action of the optimality equation.
\label{thm:bellman}
\end{theorem}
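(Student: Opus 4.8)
The plan is to show, by backward induction on $t$, that $V_t^*(s) = \inf_{\pi \in \Pi} J_t^\pi(s)$ for every $s \in \mathcal S$, where $J_t^\pi(s)$ denotes the ``tail'' objective $\rho_t^\alpha\bigl(C_{t+1}^\pi + \rho_{t+1}^\alpha(C_{t+2}^\pi + \cdots + \rho_{T-1}^\alpha(C_T^\pi)\cdots)\bigr)$ evaluated when $S_t^\pi = s$, and $V_t^*$ is the function produced by the stated recursion with $V_T^* \equiv 0$. The base case $t = T$ is immediate. Before the inductive step I would record a \emph{Markov-reduction lemma}: for any (Markov) policy $\pi$, the $\mathcal F_{t+1}$-measurable random variable $\rho_{t+1}^\alpha(C_{t+2}^\pi + \cdots)$ equals, almost surely, a deterministic function of $S_{t+1}^\pi$ alone. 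This is the one place the standing assumption that $\{W_t\}$ is independent across time is essential, together with the form of the conditional QBRM: given $\mathcal F_{t+1}$, the future costs are functions of $S_{t+1}^\pi$ and of fresh, independent noise $W_{t+2}, W_{t+3}, \ldots$, so the conditional law of the future depends on the past only through $S_{t+1}^\pi$; hence so do the conditional quantiles $q_{t+1}^{\alpha_i}(\cdot)$ and the conditional expectation defining $\rho_{t+1}^\alpha$. (This lemma is itself proved by a short backward induction from $t = T$.) It is what makes the inductive hypothesis ``$J_{t+1}^\pi(S_{t+1}^\pi) \ge V_{t+1}^*(S_{t+1}^\pi)$ almost surely'' meaningful.

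For the lower bound $\inf_\pi J_t^\pi(s) \ge V_t^*(s)$: fix $\pi$, set $a = A_t^\pi(s) \in \mathcal A_s$, and note that by the nested structure $J_t^\pi(s) = \rho_t^\alpha\bigl(c_t(s,a,W_{t+1}) + Y^\pi\bigr)$, where $Y^\pi$ is the tail-$(t{+}1)$ value under $\pi$. By the Markov-reduction lemma and the inductive hypothesis, $Y^\pi \ge V_{t+1}^*\bigl(S^M(s,a,W_{t+1})\bigr)$ almost surely, so monotonicity of $\rho_t^\alpha$ gives $J_t^\pi(s) \ge \rho_t^\alpha\bigl(c_t(s,a,W_{t+1}) + V_{t+1}^*(S^M(s,a,W_{t+1}))\bigr) \ge \min_{a' \in \mathcal A_s}\rho_t^\alpha\bigl(c_t(s,a',W_{t+1}) + V_{t+1}^*(S_{t+1})\bigr) = V_t^*(s)$; taking the infimum over $\pi$ closes this direction. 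Only monotonicity of the one-step risk measure enters here, which is precisely the point of the remark preceding the theorem.

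For the matching upper bound I would exhibit the greedy policy $\pi^*$ defined by choosing $A_\tau^{\pi^*}(s')$ to be any minimizer of $a' \mapsto \rho_\tau^\alpha\bigl(c_\tau(s',a',W_{\tau+1}) + V_{\tau+1}^*(S_{\tau+1})\bigr)$ for every $\tau \in \mathcal T$ and $s' \in \mathcal S$; such minimizers exist because $\mathcal A_{s'}$ is finite, which is also where the measurable-selection ingredient of the general interchangeability property becomes trivial in our setting. A second backward induction on $\tau$ shows $J_\tau^{\pi^*}(s') = V_\tau^*(s')$ for all $s'$: substituting $J_{\tau+1}^{\pi^*}(S_{\tau+1}) = V_{\tau+1}^*(S_{\tau+1})$ into the nested expression for $J_\tau^{\pi^*}(s')$ and using the defining property of $A_\tau^{\pi^*}(s')$ yields exactly $V_\tau^*(s')$. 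Hence $\inf_\pi J_t^\pi(s) \le J_t^{\pi^*}(s) = V_t^*(s)$, so equality holds, $\pi^*$ attains the optimum, and its decision functions are the stated $\argmin$ selections.

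The main obstacle — the only step that is not bookkeeping — is the Markov-reduction lemma: making precise that composing conditional QBRMs over an MDP driven by temporally independent noise yields value functions that genuinely depend only on the current state, so that the recursion even type-checks. Once that is established, and once one observes that the finiteness of $\mathcal S$ and $\mathcal A$ trivializes the interchangeability/measurable-selection subtleties that complicate the general treatment in \cite{Ruszczynski2010}, the recursion and the optimality of the greedy policy follow from monotonicity alone (translation invariance being already built into the nested representation) via the two short inductions above; convexity is never used.
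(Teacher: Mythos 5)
Your proposal is correct, but it is worth noting that the paper does not actually prove Theorem \ref{thm:bellman}: it imports the result from \cite{Ruszczynski2010}, adding only the remark that the cited proof uses monotonicity together with the interchangeability property (\cite[Theorem 7.1]{Ruszczynski2006b}, \cite[Theorem 2]{Ruszczynski2010}), so that convexity of the one-step QBRM is not needed. Your argument is therefore a genuinely different, self-contained route specialized to the paper's setting: finite $\mathcal S$ and $\mathcal A$, deterministic Markov policies, and temporally independent $\{W_t\}$. The two backward inductions (lower bound via monotonicity, upper bound via the greedy selector) are sound, and your Markov-reduction lemma is exactly the right pivot --- it is what replaces the general ``Markov risk measure'' machinery of \cite{Ruszczynski2010}, since conditioning on $\mathcal F_{t+1}$ with independent noise makes the conditional cdf, hence the conditional quantiles and the conditional expectation defining $\rho_{t+1}^\alpha$, deterministic functions of $S_{t+1}^\pi$ alone. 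Likewise, finiteness of $\mathcal A_s$ trivializes the measurable-selection/interchangeability step, and since the objective (\ref{eq:rmdp}) is posed directly in nested form, translation invariance is never invoked --- consistent with the paper's remark that only monotonicity matters. What your route buys is transparency: it shows exactly where independence across time, finiteness, and monotonicity enter, and confirms convexity plays no role. What the citation route buys is generality and brevity: Ruszczyński's theorem covers Borel models and handles the interchange of minimization and the risk measure (and the sufficiency of Markov policies within a larger policy class) in one stroke, issues your argument sidesteps only because the paper's $\Pi$ is already the class of deterministic Markov policies over finite spaces. One small point to make explicit if you write this up: the substitution $J_{\tau+1}^{\pi^*}(S_{\tau+1}) = V_{\tau+1}^*(S_{\tau+1})$ inside $\rho_\tau^\alpha$ uses that conditional risk measures respect almost-sure equality, which follows from applying monotonicity in both directions.
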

For computational purposes, we are interested in interchanging the minimization operator and the risk measure $\rho_t^\alpha$ and thus appeal to the \emph{state-action value function} or \emph{$Q$-factor} formulation  of the Bellman equation. Define the state-action value function over the state-action pairs $(s,a) \in \mathcal U$ to be $Q_t^*(s,a) = \rho_t^\alpha \left(c_t(s, a, W_{t+1}) + V_{t+1}^*(S_{t+1}) \right)$, for $t\in \mathcal T$ and let $Q_T^*(s,a)=0$. Thus, the counterpart to the recursion in Theorem \ref{thm:bellman} is
\begin{equation}
Q_t^*(s, a) = \rho_t^\alpha \bigl(c_t(s, a, W_{t+1}) + \!\!\! \min_{a' \in \mathcal A_{S_{t+1}}} \!\!\! Q_{t+1}^*(S_{t+1},a') \bigr),
\label{eq:bellmanq}
\end{equation}
with the minimization occurring inside of the risk measure.
\subsection{Some Remarks on Notation} For simplicity, we henceforth refer to $Q^*$ simply as the \emph{optimal value function}. Let $d=|\mathcal U|$ and $D = |\mathcal U| \, (T+1)$. We consider $Q^*$ to be a vector in $\mathbb R^{D}$ with components $Q_t^*(s,a)$. We also frequently use the notation $Q_t^* \in \mathbb R^d$ for some $t\le T$, by which we mean $Q^*$ restricted to the components $Q_t^*(s,a)$ for all $(s,a) \in \mathcal U$. We adopt this system for any vector in $\mathbb R^{D}$ (e.g., $\bar{Q}^n$, $u^{i,*}$, and $\bar{u}^{i,n}$ to be defined later). The norms used in this paper are $\| \cdot \|_1$, $\| \cdot \|_2$, and $\|\cdot \|_\infty$, the $l_1$-norm, the Euclidean norm, and the maximum norm, respectively. Example usages of the latter two are
\[
\| Q_t^* \|_2 = \biggl(\sum_{(s,a) \in \, \mathcal U} Q_t^*(s,a)^2 \biggr)^\frac{1}{2} \quad \mbox{and} \quad \| Q_t^* \|_\infty = \max_{(s,a) \in \, \mathcal U} |Q^*_t(s,a)|.
\]
The following naming convention is used throughout the paper and appendix: stochastic processes denoted using $\epsilon$, i.e., $\epsilon_{t+1}^{q,n}$, $\epsilon_{t+1}^{i,n}$, and $\epsilon_{t+1}^{h,n}$, are \emph{conditionally unbiased} noise sequences and represent Monte Carlo sampling error. On the other hand, the processes denoted using $\xi$, i.e., $\xi_{t+1}^{q,n}$, $\xi_{t+1}^{i,n}$, and $\xi_{t+1}^{h,n}$, are \emph{biased} noise and represent approximation error from using a value function approximation. For a vector $v$, $\textnormal{diag}(v)$ is the diagonal matrix whose entries are the components of $v$. Lastly, for a nonnegative function $f:\mathbb R \rightarrow \mathbb R$, its \emph{support} is represented by the notation $\textnormal{supp}(f) = \{x \in \mathbb R: f(x) >  0 \}$. 


\section{Algorithm}
\label{sec:algorithm}
In this section, we introduce the risk-averse ADP algorithm for dynamic quantile-based risk measures, which aims to approximate the value function $Q^*$ in order to produce near-optimal policies. 
\subsection{Overview of the Main Idea} Like most ADP and reinforcement learning algorithms, the algorithm that we develop in this paper to solve (\ref{eq:rmdp}) is based on the recursive relationship of (\ref{eq:bellmanq}). The basic structure for the algorithm is a time-dependent version of $Q$-learning or approximate value iteration (see \cite[Chapter 10]{Powell2011} for a discussion). Recall the form of the QBRM:
\begin{equation*}
\rho_t^\alpha(X) = \mathbf{E} \Bigl[ \Phi\bigl(X,q_t^{\alpha_1}(X),q_t^{\alpha_2}(X),\ldots,q_t^{\alpha_m}(X) \bigr) \, \bigr | \, \mathcal F_t  \Bigr].
\end{equation*}
The main idea of our approach is to approximate the quantiles $q_t^{\alpha_i}(X)$ and then combine the approximations to form an estimate of the risk measure. In essence, every observation of the exogenous information process (real or simulated data) can be utilized to give an updated approximation of each of the $m$ quantiles. A second step then takes the observation and the quantile approximations to generate an refined approximation of the optimal value function $Q^*$. This type of logic is implemented using many concurrent \emph{stochastic gradient} \citep{Robbins1951,Kushner2003} steps within a framework that walks through a single forward trajectory of states and actions on each iteration. 

It turns out that there is a convenient characterization of the quantile through the so-called CVaR \emph{minimization formula}. Given a real-valued, integrable random variable $X$, a risk level $\alpha_i \in (0,1)$, and $u \in \mathbb R$, \cite{Rockafellar2000} proves that
\begin{equation}
q^{\alpha_i}(X) \in \argmin_{u \in \mathbb R} \, \mathbf{E} \left[u + \frac{1}{1-\alpha_i} \, (X-u)^+  \right].
\label{eq:cvarmin}
\end{equation}
Although the main result of \cite{Rockafellar2000} is that the optimal value of the optimization problem gives the $\textnormal{CVaR}^{\alpha_i}(X)$, the characterization of the quantile as the minimizer is particularly useful for our purposes. It suggests the use of \emph{stochastic approximation} or \emph{stochastic gradient descent} algorithms \citep{Robbins1951,Kushner2003} to iteratively optimize (\ref{eq:cvarmin}).

With this intuition in mind, let us move back to the context of the MDP and define the auxiliary variables $u^{i,*} \in \mathbb R^D$, for each $i \in \{1,2,\ldots,m\}$, to be the $\alpha_i$-quantiles of the future costs (recall that the quantiles $\alpha_i$ are defined as an argument to our QBRM in Definition \ref{defn:qbrm}). The component at time $t$ and state $(s,a)$ is
\begin{equation}
u^{i,*}_t(s,a) = q^{\alpha_i}\bigl(c_t(s, a, W_{t+1}) + \min_{a' \in \mathcal A_{S_{t+1}}} \! \! \! Q_{t+1}^*(S_{t+1},a') \bigr),
\label{eq:aux-u}
\end{equation}
for each $i \in \{1,2,\ldots,m\}$. Using (\ref{eq:rhoq}), this allows us to take advantage of the equation
\begin{equation}
\begin{aligned}
Q_{t}^* &(s,a) = \mathbf{E} \Bigl[ \Phi \bigl( c_t(s, a, W_{t+1}) + \min_{a' \in \mathcal A_{S_{t+1}}} \! \! \! Q_{t+1}^*(S_{t+1},a'), u^{1,*}_t(s,a), \ldots, u^{m,*}_t(s,a) \bigr)  \Bigr].
\end{aligned}
\label{eq:Qustar}
\end{equation}
The relationship between $Q^*$ and $u^{i,*}$ is fundamental to our algorithmic approach, which keeps track of mutually dependent approximations $\{ \bar{u}^{i,n} \}$ and $\{\bar{Q}^n\}$ to the optimal values $u^{i,*}$ and $Q^{*}$, respectively.

\subsection{The Dynamic-QBRM ADP Algorithm} 
Before discussing the details, we need some additional notation.
Clearly, at each time $t$, the random quantity with which we are primarily concerned (and attempt to approximate) is the future cost given the optimal value function $Q^*$. Thus, we explicitly define its distribution function for every $(s,a)$:
\[
F_t(x\<|\<s,a) = \mathbf{P} \Bigl  [ c_t\bigl(s,a,W_{t+1}\bigr)  + \min_{a' \in \mathcal A_{S_{t+1}}} \! \! \! Q^*_{t+1}\bigl(S_{t+1},a'\bigr)  \le x  \Bigr  ].
\]
Recall that $d$ is the cardinality of the state-action space. Next, suppose $\bar{u}^i_t \in \mathbb R^d$ is an approximation of $u_t^{i,*}$ and for each $t$ and $i \in \{1,2,\ldots,m\}$, define the \emph{stochastic gradient mapping} $\psi^i_t : \mathbb R^d \times \mathbb R^d \times \mathcal W \rightarrow \mathbb R^d$ to perform the stochastic gradient computation:
\begin{equation}
\begin{aligned}
\psi_t^i\bigl(\bar{u}^i_t, \bar{Q}_{t+1}, &W_{t+1}   \bigr)(s,a) \\
&= 1 - \frac{1}{1-\alpha_i} \, \mathbf{1} \Bigl \{ c_t\bigl(s,a,W_{t+1}\bigr)  + \min_{a' \in \mathcal A_{S_{t+1}}} \! \! \! \bar{Q}_{t+1}\bigl(S_{t+1},a'\bigr) \ge \bar{u}^i_t(s,a) \Bigr\}.
\end{aligned}
\label{eq:psi_def}
\end{equation}
To avoid confusion, we note that this is the stochastic gradient associated with the minimization formula (\ref{eq:cvarmin}) for CVaR, but this step is necessary for any QBRM, even if we are not utilizing CVaR. 

The second piece of notation we need is a specialized, stochastic version of the \emph{Bellman operator} to the risk-averse case: for each $t$, we define the mapping $H_t : \mathbb R^d \times \cdots \times \mathbb R^d \times \mathcal W \rightarrow \mathbb R^d$, with $m+1$ arguments in $\mathbb R^d$, to represent an approximation of the term within the expectation of (\ref{eq:Qustar}):
\begin{align*}
 H_t\bigl(\bar{u}^{1}_t,\ldots,&\bar{u}^{m}_t, \bar{Q}_{t+1}, W_{t+1}   \bigr)(s,a) \\
 &= \Phi \bigl( c_t\bigl(s,a,W_{t+1}\bigr)  + \!  \! \min_{a' \in \mathcal A_{S_{t+1}}} \! \! \! \bar{Q}_{t+1}\bigl(S_{t+1},a'\bigr), \bar{u}^{1}_t(s,a), \ldots, \bar{u}^{m}_t(s,a) \bigr).
 \end{align*}
Therefore, (\ref{eq:Qustar}) can be rewritten using the stochastic Bellman operator $H_t$ by replacing all approximate quantities with their true values:
\begin{equation}
Q_{t}^* (s,a) = \mathbf{E} \Bigl[ H_t\bigl(u^{1,*}_t,\ldots,u^{m,*}_t, Q^*_{t+1}, W_{t+1}   \bigr)(s,a)  \Bigr].
\label{eq:optimalrecursion}
\end{equation}
The Dynamic-QBRM ADP algorithm that we describe in the next section consists of both outer and inner iterations: for each outer iteration $n$, we step through the entire time horizon of the problem $t \in \mathcal T$. At time $t$, iteration $n$, the relevant quantities for our algorithms are a state-action pair $(S_t^n, a_t^n) \in \mathcal U$ and two samples $W_{t+1}^{u,n}, \, W_{t+1}^{q,n} \in \mathcal W$ from the distribution of $W_{t+1}$ corresponding to the ``two steps'' of our algorithm, one for approximating the auxiliary variables $u^{i,*}$ and the second for approximating the value function $Q^{*}$. Figure \ref{fig:algidea} illustrates the main idea behind the algorithm: we merge the results of $m$ adaptive minimizations of (\ref{eq:cvarmin}), corresponding to estimates of the $m$ quantiles, into an estimate of the optimal value function, $\bar{Q}_t^n$. The estimate $\bar{Q}_t^n$ is then used to produce estimates of the relevant quantities for the previous time period. Note that the $m$ objective functions shown in the figure differ only in their risk levels $\alpha_i$. The arrows on the curves indicate that the minimizations are achieved via gradient descent steps.


\begin{figure}[h]
	\centering
	\includegraphics[width=\textwidth]{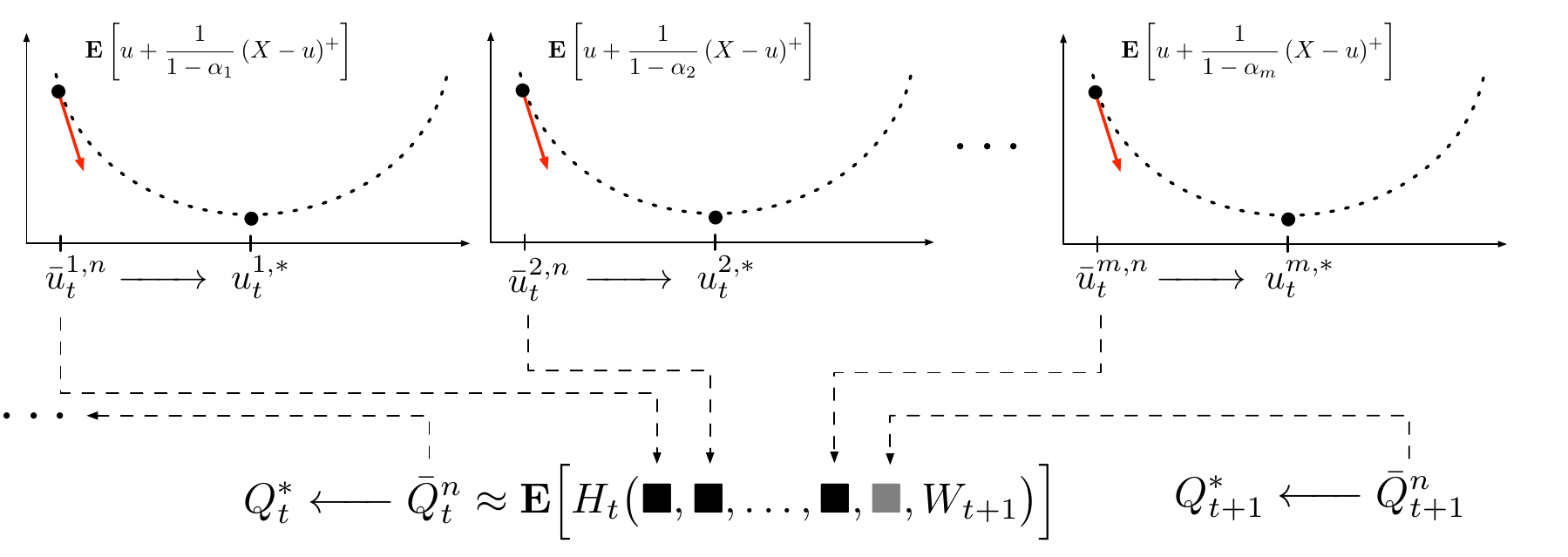}
	\caption{Main Algorithm Idea}
	\label{fig:algidea}
\end{figure}


Now that we are in an algorithmic setting, we consider a new probability space $(\Omega, \mathcal G, \mathbf P)$, where $\mathcal G = \sigma \bigl\{ (S_{t}^n, a_{t}^n, W_{t}^{u,n}, W_{t}^{q,n}),\,n \ge 0,\, t \le T \bigr\}$.
In order to describe the history of the algorithms, we define:
\[
\mathcal G_t^n = \sigma \bigl\{ \{ (S_{\tau}^k, a_{\tau}^k, W_{\tau}^{u,k}, W_{\tau}^{q,k}),\,k < n,\,\tau \le T \} \cup \{ (S_{\tau}^n, a_{\tau}^n, W_{\tau}^{u,n}, W_{\tau}^{q,n}),\, \tau \le t \}\bigr\},
\]
for $t \in \mathcal T$ and $n \ge 1$, with $\mathcal G_t^0 = \{\varnothing,\Omega\}$ for all $t \le T$. We therefore have a filtration that obeys $\mathcal G_t^n \subseteq \mathcal G_{t+1}^n$ for $t \in \mathcal T$ and $\mathcal G_{T}^n \subseteq \mathcal G_0^{n+1}$, coinciding precisely with the progression of the algorithm. The random variables $(S_t^n,a_t^n)$ are generated according to some sampling policy (to be discussed later)
while $W_{t+1}^{u,n}$ and $W_{t+1}^{q,n}$ are generated from the distribution of the exogenous process $W_{t+1}$.

Crucial to many ADP algorithms is the \emph{stepsize} (or \emph{learning rate}). In our case, we use $\{\gamma_t^n\}$ and $\{\eta_t^n\}$ for smoothing new observations with previous estimates, where $\gamma_t^n,\,\eta_t^n \in \mathbb R^d$ for each $t$ and $n$ and are $\mathcal G_t^n$-measurable. The stepsize $\gamma_t^n$ is used to update our approximation of $u_t^{i,*}$ while the stepsize $\eta_t^n$ is used to update $Q_t^*$; see Algorithm \ref{alg:dq}. We articulate the asynchronous nature of our algorithm by imposing the following condition on the stepsizes (included in Assumption \ref{ass:algorithm} of Section \ref{sec:algconv}):
\begin{equation*}
(s,a) \ne (S_t^n, a_t^n) \; \Longrightarrow \; \gamma_t^n(s,a) = \eta_t^n(s,a) = 0,
\label{eq:asynchstep}
\end{equation*}
which causes updates to only happen for states that we actually visit.

Stochastic approximation theory often requires a projection step onto a compact set (giving bounded iterates) to ensure convergence \citep{Kushner2003}. Hence, for each $t$ and $(s,a)$, let $\mathcal X^u_t(s,a) \subseteq \mathbb R$ and $\mathcal X^q_t(s,a) \subseteq \mathbb R$ be compact intervals and let
\begin{equation*}
\mathcal X^u_t = \bigl\{ x \in \mathbb R^d : x(s,a) \in \mathcal X_t^u(s,a)\bigr\} \mbox{ and } \mathcal X^q_t = \bigl\{ x \in \mathbb R^d : x(s,a) \in \mathcal X_t^q(s,a)\bigr\},
\end{equation*}
be our projection sets at time $t$. The Euclidean projection operator to a set $\mathcal X$ is given by the usual definition:
\[
\Pi_{\mathcal X}(y) = \argmin_{x \in \mathcal X} \| y-x   \|_2^2.
\]
These sets may be chosen arbitrarily large in practice and our first theoretical result (almost sure convergence) will continue to hold. However, there is a tradeoff: if, in addition, we want our convergence \emph{rate} results to hold, then these sets also cannot be too large (see Assumption \ref{ass:strongconv}).

The precise steps of Dynamic-QBRM ADP are given in Algorithm \ref{alg:dq}. A main characteristic of the algorithm is that sequences $\{\bar{u}^{i,n}\}$ and $\{\bar{Q}^n\}$ are intertwined (i.e., depend on each other). Consequently, there are multiple levels of approximation being used throughout the steps of the algorithm. The theoretical results of the subsequent sections shed light onto these issues.

\IncMargin{1em}
\begin{algorithm}
\small
  \SetKwInput{Input}{Input}
  \SetKwInput{Output}{Output}
  \DontPrintSemicolon
\Indm  
  \Input{Initial estimates of the value function $\bar{Q}^0 \in \mathbb R^D$ and quantiles $\bar{u}^{i,0} \in \mathbb R^D$ for $i \in \{1,2,\ldots,m\}$.}
  \myinput{Stepsize rules $\gamma_t^n$ and $\eta_t^n$ for all $n$, $t$.}
  \BlankLine
  \Output{Approximations $\{\bar{Q}^n\}$ and $\{\bar{u}^{i,n}\}$.}
\Indp
  \BlankLine
  \nl Set $\bar{Q}_T^n = 0$ for all $n$.\;
  \For{$n = 1, 2, \ldots$}{
  \nl  Choose an initial state $(S_0^n, a_0^n)$.\;
  \For{$t = 0, 1, \ldots, T-1$}{
  \nl  Draw samples of the information process $W_{t+1}^{u,n}, W_{t+1}^{q,n} \in \mathcal W$.
    \BlankLine

\nl  Update auxiliary variable approximations for $i=1,\ldots,m$: \\
    $\quad \quad \quad \quad \bar{u}_t^{i,n} = \Pi_{\mathcal X_t^u} \Bigl \{ \bar{u}_t^{i,n-1} - \textnormal{diag}(\gamma_t^n) \, \psi^i_t \bigl(\bar{u}_t^{i,n-1},\bar{Q}_{t+1}^{n-1}, W_{t+1}^{u,n}\bigr)\Bigr\}.$\;
  \BlankLine

  \nl  Compute an estimate of the future cost based on the current approximations: \\
  $\quad \quad \quad \quad \hat{q}_t^n = H_t\bigl(\bar{u}^{1,n-1}_t,\ldots,\bar{u}^{m,n-1}_t, \bar{Q}_{t+1}^{n-1}, W_{t+1}^{q,n}   \bigr)$.\;
  \BlankLine

 \nl  Update approximation of value function: \\
    $\quad \quad \quad \quad \bar{Q}_t^n = \Pi_{\mathcal X_t^q} \Bigl \{\bar{Q}_t^{n-1} - \textnormal{diag}(\eta_t^n) \, \bigl(\bar{Q}_t^{n-1}-\hat{q}_t^n\bigr) \Bigr \}.$\;
  \BlankLine

 \nl  Choose next state $(S_{t+1}^n, a_{t+1}^n)$.
  }
  }
    \caption{Dynamic-QBRM ADP}
    \label{alg:dq}
\end{algorithm}
\DecMargin{1em}

\section{Analysis of Convergence}
\label{sec:algconv}
In this section, we state and prove convergence theorems for Algorithm \ref{alg:dq}. First, we give an overview of our analysis and the relationship to existing work. 

\subsection{A Preview of Results}
The two main results of this section are: (1) the almost sure convergence of Dynamic-QBRM ADP and (2) a convergence rate result under a particular sampling policy called $\varepsilon$-greedy.
The proof of almost sure convergence uses techniques from the stochastic approximation literature \citep{Kushner2003}, which were applied to the field of reinforcement learning and $Q$-learning in \cite{Tsitsiklis1994a}, \cite{Jaakkola1994} and \cite{Bertsekas1996}. However, our algorithm differs from risk-neutral $Q$-learning in that it tracks multiple quantities, $\bar{u}_t^{1,n}, \bar{u}_t^{2,n}, \ldots, \bar{u}_t^{m,n}, \bar{Q}_t^n$, over a horizon $1, 2, \ldots, T$. The intuition behind the proof is that multiple ``stochastic approximation instances'' are pasted together in order to obtain overall convergence of all relevant quantities. Accordingly, the interdependence of various approximations means that in several parts of the proof, we require careful analysis of \emph{biased noise terms} (or \emph{approximation error}) in addition to \emph{unbiased statistical error}. See, e.g., \cite{Kearns1999}, \cite{EvenDar2004} and \cite{Azar2011}, for convergence rate results for standard $Q$-learning. The proof technique used to analyze the high probability convergence rate of risk-neutral $Q$-learning in \cite{EvenDar2004} is based on the same types of stochastic approximation results that we utilize in this paper.


Let us now make a few remarks regarding some simplifying assumptions made in this paper. As proven in \cite[Theorem 10]{Rockafellar2002}, the set of minimizers $\argmin_{u \in \mathbb R} \mathbf{E} \bigl[u + (1-\alpha_i)^{-1} \, (X-u)^+  \bigr]$ is a nonempty, closed, and bounded interval for a general $X$. We shall for ease of presentation, however, make assumptions (strictly increasing and continuous cdf, Assumption \ref{ass:problem}(iii)) to guarantee that $u_t^{i,*}(s,a)$ is the unique minimizer when $X$ is the optimal future cost $c_t(s, a, W_{t+1}) + \min_{a' \in \mathcal A_{S_{t+1}}} Q_{t+1}^*(S_{t+1},a')$ and that gradient computations to remain valid. This assumption is sufficient for almost sure convergence (Theorem \ref{thm:asconv}). To further examine the convergence rate of the algorithm (Theorem \ref{thm:convrate1}), we must additionally have Assumption \ref{ass:strongconv}, which states that the density of the future cost exists and is positive within the constraint sets $\mathcal X_t^u$ --- this provides us the technical condition of strong convexity (discussed more in Section \ref{sec:algconvrate} below). 

Since $\min_{a' \in \mathcal A_{S_{t+1}}} \! Q_{t+1}^*(S_{t+1},a')$ has a discrete distribution, the assumptions hold only in certain situations: an obvious case is when the current stage cost has a density and is independent of $S_{t+1}$. For example, such a property holds when $W_{t+1}$ can be written as two independent components $(W_{t+1}^1,W_{t+1}^2)$ where the current stage cost depends on $W_{t+1}^1$ and the downstream state depends on $W_{t+1}^2$. This model is relevant in a number of applications; notable examples include multi-armed bandits \citep{Whittle1980}, shortest path problems with random edge costs \citep{Ryzhov2011}, trade execution with temporary (and temporally independent) price impact \citep{Bertimas1998}, and energy trading in two-settlement markets \citep{Lohndorf2014}. Small algorithmic extensions (requiring more complex notation) to handle the general case are possible, but the fundamental concepts would remain unchanged. Hence, we will assume the cleaner setting for the purposes of this paper.



\subsection{Almost Sure Convergence} First, we discuss the necessary algorithmic assumptions, many of which are standard to the field of stochastic approximation.
\begin{assumption} 
For all $(s,a) \in \mathcal U$ and $t \in \mathcal T$, suppose the following are satisfied:
\begin{enumerate}[label=(\roman*),labelindent=1in]
\vspace{0.5em}
\item $\gamma_t^n(s,a) = \tilde{\gamma}_t^{n-1} \, \mathbf{1}_{\{(s,a) = (S_t^n,a_t^n)\}}$, for some $\tilde{\gamma}_t^{n-1}\in \mathbb R$ that is $\mathcal G_t^{n-1}$-measurable,
\vspace{0.5em}
\item $\eta_t^n(s,a) = \tilde{\eta}_t^{n-1} \, \mathbf{1}_{\{(s,a) = (S_t^n,a_t^n)\}}$, for some $\tilde{\eta}_t^{n-1}\in \mathbb R$ that is $\mathcal G_t^{n-1}$-measurable,
\vspace{0.5em}
\item $\displaystyle \sum_{n=1}^\infty \gamma_t^n(s,a) = \infty, \quad \sum_{n=1}^\infty \eta_t^n(s,a) = \infty  \quad a.s.$,
\item $\displaystyle \sum_{n=1}^\infty \gamma_t^n(s,a)^2 < \infty, \quad \sum_{n=1}^\infty \eta_t^n(s,a)^2 < \infty \quad a.s.$,
\vspace{0.5em}
\item $\exists \; \varepsilon > 0$, such that state sampling policy satisfies
\[
\mathbf{P}\bigl((S_t^n,a_t^n) =(s,a) \, \bigl| \, \mathcal G^{n}_{t-1} \bigr) \ge \varepsilon \quad \mbox{and} \quad \mathbf{P}\bigl((S_0^n,a_0^n) =(s,a) \, \bigl| \, \mathcal G^{n-1}_{T} \bigr) \ge \varepsilon,
\]
\item the projection sets are chosen large enough so that $u_t^{i,*} \in \mathcal X_t^u$ for each $i \in \{1,2,\ldots,m\}$ and $Q_t^* \in \mathcal X_t^q$.
\end{enumerate}
\label{ass:algorithm}
\end{assumption}

Assumption \ref{ass:algorithm}(i) and (ii) represent the asynchronous nature of the algorithm, sending the stepsize to zero whenever a state is not visited, while (iii) and (iv) are standard conditions on the stepsize. Assumption \ref{ass:algorithm}(v) is an \emph{exploration} requirement; by the Extended Borel-Cantelli Lemma (see \citet{Breiman1992}), sampling with this exploration requirement guarantees that we will visit every state infinitely often with probability one. In particular, for the case with an $\varepsilon$-greedy sampling policy (i.e., explore with probability $\varepsilon$, follow current policy otherwise), this assumption holds. We discuss this policy in greater detail in Section \ref{sec:algconv}. Part (vi) is a technical assumption. The second group of assumptions that we present are related to the problem parameters.


\begin{assumption}
The following hold:
\begin{enumerate}[label=(\roman*),labelindent=1in]
\item the risk-aversion function $\Phi : \mathbb R^{m+1} \rightarrow \mathbb R$ (from the QBRM within the one-step conditional risk measure $\rho_t^\alpha$) is Lipschitz continuous with constant $L_\Phi > 0$, i.e., for all $v,\,w \in \mathbb R^{m+1}$, $|\Phi(v) - \Phi(w) | \le L_\Phi \, \|v-w\|_1$,\vspace{0.5em}
\item $\exists \; C_\textnormal{max} > 0$ such that $\mathbf{E} \bigl[ c_t(s,a,W_{t+1}) ^2  \bigr] \le C_\textnormal{max}$ for all $(s,a) \in \mathcal U$ and $t \in \mathcal T$,\vspace{0.5em}
\item the distribution function $F_t(x\<|\<s,a)$ is strictly increasing and Lipschitz continuous with constant $L_F > 0$, i.e.,
\[
| F_t(x\<|\<s,a) - F_t(y\<|\<s,a) | \le L_F \, | x - y |,
\]
for all $x,\,y \in \mathbb R$, $(s,a) \in \mathcal U$, and $t \in \mathcal T$.\end{enumerate}
\label{ass:problem}
\end{assumption}

As an example for Assumption \ref{ass:problem}(i), in the case of the QBRM being CVaR, where $\Phi(X,q) = q + \frac{1}{1-\alpha}\,(X-q)^+$, it is easy to see that we can take $L_\Phi = 1+\frac{1}{1-\alpha}$. Assumption \ref{ass:problem}(ii) states that the second moment of the cost function is bounded. Assumption \ref{ass:problem}(iii) and Assumption \ref{ass:algorithm}(vi) together imply that $u_t^{i,*}(s,a)$ is the unique $u \in \mathcal X_t^u(s,a)$ such that $F_t(u\<|\<s,a) = \alpha_i$. 





\begin{lemma}
Under Assumptions \ref{ass:algorithm} and \ref{ass:problem}, if $\bar{Q}_{t+1}^n \rightarrow Q_{t+1}^*$ almost surely, then the sequences of iterates $\bar{u}_t^{i,n}$ generated in Step 4 of Algorithm \ref{alg:dq} satisfy $\bar{u}_t^{i,n} \rightarrow u_t^{i,*}$ almost surely for each $i \in \{1,2,\ldots,m\}$.
\label{lem:uconv}
\end{lemma}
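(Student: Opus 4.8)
The plan is to read Step~4 of Algorithm~\ref{alg:dq} as a projected, asynchronous stochastic gradient recursion for the (strictly) convex function
\[
g_t^i(u) = u + \frac{1}{1-\alpha_i}\,\mathbf{E}\bigl[(X_t(s,a)-u)^+\bigr], \qquad X_t(s,a) = c_t(s,a,W_{t+1}) + \min_{a'\in\mathcal A_{S_{t+1}}} \! Q_{t+1}^*(S_{t+1},a'),
\]
whose gradient is $(g_t^i)'(u) = (F_t(u\,|\,s,a)-\alpha_i)/(1-\alpha_i)$ and whose \emph{unique} minimizer, by Assumption~\ref{ass:problem}(iii) (strictly increasing, Lipschitz cdf) together with Assumption~\ref{ass:algorithm}(vi), is the point $u_t^{i,*}(s,a)$ where $F_t(\cdot\,|\,s,a)=\alpha_i$, which lies in the projection interval $\mathcal X_t^u(s,a)$. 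Since $|\mathcal U|<\infty$ and there are finitely many indices $i$, it suffices to prove $\bar u_t^{i,n}(s,a)\to u_t^{i,*}(s,a)$ a.s.\ for a fixed $(s,a)$ and $i$; everything is carried out on the probability-one event $\{\bar Q_{t+1}^n\to Q_{t+1}^*\}$, on which $\delta_n := \|\bar Q_{t+1}^{n-1}-Q_{t+1}^*\|_\infty \to 0$.

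The first real step is to split the stochastic gradient actually used,
\[
\psi_t^i\bigl(\bar u_t^{i,n-1},\bar Q_{t+1}^{n-1},W_{t+1}^{u,n}\bigr)(s,a) = (g_t^i)'\!\bigl(\bar u_t^{i,n-1}(s,a)\bigr) + \epsilon_{t+1}^{i,n}(s,a) + \xi_{t+1}^{i,n}(s,a),
\]
where $\epsilon_{t+1}^{i,n}$ is obtained by replacing $\bar Q_{t+1}^{n-1}$ with $Q_{t+1}^*$ and subtracting the conditional mean, and $\xi_{t+1}^{i,n}$ is the approximation error from using $\bar Q_{t+1}^{n-1}$ rather than $Q_{t+1}^*$. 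Because $W_{t+1}^{u,n}$ is independent of $\mathcal G_t^n$ while $\bar u_t^{i,n-1}$ and $\bar Q_{t+1}^{n-1}$ are $\mathcal G_t^n$-measurable, we get $\mathbf{E}[\epsilon_{t+1}^{i,n}(s,a)\mid\mathcal G_t^n]=0$ and $|\epsilon_{t+1}^{i,n}(s,a)|\le 1+(1-\alpha_i)^{-1}$, so the unbiased noise is square-summable against $(\gamma_t^n)^2$ by Assumption~\ref{ass:algorithm}(iv). For $\xi$, using $|\min_{a'}\bar Q_{t+1}^{n-1}(\cdot,a') - \min_{a'}Q_{t+1}^*(\cdot,a')|\le\delta_n$ one sees that the two indicators inside $\psi_t^i$ can disagree only when $|X_t(s,a)-\bar u_t^{i,n-1}(s,a)|\le\delta_n$, so the Lipschitz bound in Assumption~\ref{ass:problem}(iii) gives
\[
\bigl|\mathbf{E}[\xi_{t+1}^{i,n}(s,a)\mid\mathcal G_t^n]\bigr| \le \frac{1}{1-\alpha_i}\,\mathbf{P}\bigl(|X_t(s,a)-\bar u_t^{i,n-1}(s,a)|\le\delta_n \bigm| \mathcal G_t^n\bigr) \le \frac{2L_F}{1-\alpha_i}\,\delta_n \to 0 \quad a.s.
\]
Writing $\xi_{t+1}^{i,n} = (\xi_{t+1}^{i,n}-\mathbf{E}[\xi_{t+1}^{i,n}\mid\mathcal G_t^n]) + \mathbf{E}[\xi_{t+1}^{i,n}\mid\mathcal G_t^n]$ then splits it into another bounded martingale-difference noise and a vanishing bias.

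It then remains to apply a standard projected stochastic-approximation argument. Restricting to the (a.s.\ infinite, since $\sum_n\gamma_t^n(s,a)=\infty$ a.s.\ by Assumptions~\ref{ass:algorithm}(i),(iii), consistently with the exploration condition~\ref{ass:algorithm}(v) and the Extended Borel--Cantelli lemma) subsequence of outer iterations on which $(s,a)$ is visited, Step~4 becomes a one-dimensional recursion $x_n = \Pi_{\mathcal X_t^u(s,a)}\{x_{n-1} - \gamma_n[(g_t^i)'(x_{n-1}) + M_n + b_n]\}$ with Robbins--Monro stepsizes, square-summable martingale-difference noise $M_n$, and $b_n\to 0$ a.s. A Robbins--Siegmund almost-supermartingale argument on $V_n=(x_n - u_t^{i,*}(s,a))^2$ --- using nonexpansiveness of $\Pi_{\mathcal X_t^u(s,a)}$ (valid since $u_t^{i,*}(s,a)\in\mathcal X_t^u(s,a)$) and monotonicity of $(g_t^i)'$, so the drift $(x_{n-1}-u_t^{i,*}(s,a))\,(g_t^i)'(x_{n-1})\ge 0$ and is bounded below by a positive constant whenever $|x_{n-1}-u_t^{i,*}(s,a)|$ is bounded away from $0$ (because $F_t(\cdot\,|\,s,a)$ is continuous and strictly increasing on the compact set $\mathcal X_t^u(s,a)$) --- yields $V_n\to 0$ a.s., hence $\bar u_t^{i,n}(s,a)\to u_t^{i,*}(s,a)$ a.s. Taking the finite intersection over $(s,a)\in\mathcal U$ and $i\in\{1,\dots,m\}$ gives the claim.

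The step I expect to be the main obstacle is the control of the biased term $\xi_{t+1}^{i,n}$. Because $\psi_t^i$ contains a discontinuous indicator, replacing $Q_{t+1}^*$ by $\bar Q_{t+1}^{n-1}$ perturbs $\psi_t^i$ by an $O(1)$ amount that does \emph{not} shrink pathwise even as $\bar Q_{t+1}^{n-1}\to Q_{t+1}^*$; only after conditioning and invoking the Lipschitz continuity (no atoms, bounded ``density'') of $F_t$ from Assumption~\ref{ass:problem}(iii) does one see that the \emph{conditional mean} of the perturbation is $O(\|\bar Q_{t+1}^{n-1}-Q_{t+1}^*\|_\infty)$ and hence vanishes. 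A secondary, purely technical, difficulty is the measurability and asynchronicity bookkeeping: $\bar Q_{t+1}^{n-1}$ is produced at the end of outer iteration $n-1$ while $\bar u_t^{i,n}$ is updated partway through iteration $n$, so one must track carefully which $\sigma$-algebra $\mathcal G_t^n$ contains what in order to justify the noise decomposition above and the conditional-expectation computations.
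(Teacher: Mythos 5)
Your proposal is correct and takes essentially the same route as the paper's proof: the same decomposition of the sampled gradient into an unbiased statistical error and a biased approximation error, the same min--max inequality plus the Lipschitz cdf of Assumption \ref{ass:problem}(iii) to show the \emph{conditional} bias is $O\bigl(\|\bar{Q}_{t+1}^{n-1}-Q_{t+1}^*\|_\infty\bigr)$ and hence vanishes almost surely, and then a standard projected stochastic-approximation argument using uniqueness of $u_t^{i,*}(s,a)$. The only differences are cosmetic: the paper concludes by citing \cite[Theorem 2.4]{Kushner2003} while you sketch a self-contained Robbins--Siegmund argument along the visit subsequence, and your conditioning on $\mathcal G_t^n$ instead of $\mathcal G_{t+1}^{n-1}$ and the factor $2$ in your two-sided bias bound are immaterial.
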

\begin{proof}
We distinguish between two types of noise sequences (in $n$) for each $t \in \mathcal T$, the \emph{statistical error} and the \emph{approximation error}, denoted by $\epsilon_{t+1}^{i,n} \in \mathbb R^d$ and $\xi_{t+1}^{i,n} \in \mathbb R^d$, respectively. The definitions are
\begin{align*}
\epsilon_{t+1}^{i,n} &= \psi_t^i\bigl(\bar{u}_t^{i,n-1},Q_{t+1}^{*},W_{t+1}^{u,n}\bigr)-\mathbf{E} \bigl[ \psi^i_t\bigl(\bar{u}^{i,n-1}_t,Q^*_{t+1},W_{t+1}\bigr)  \bigr],\\
\xi_{t+1}^{i,n} &= \psi^i_t\bigl(\bar{u}_t^{i,n-1},\bar{Q}_{t+1}^{n-1},W_{t+1}^{u,n}\bigr)- \psi_t^i\bigl(\bar{u}_t^{i,n-1},Q_{t+1}^{*},W_{t+1}^{u,n}\bigr),
\end{align*}
 and we see that the random variable $\epsilon_{t+1}^{i,n}$ represents the error that the sample gradient deviates from its mean, computed using the \emph{true future cost} distribution (i.e., assuming we have $Q_{t+1}^*$). On the other hand, $\xi_{t+1}^{i,n}$ is the error between the two evaluations of $\psi^i_t$ given the same sample $W_{t+1}^{u,n}$, due only to the difference between $\bar{Q}_{t+1}^{n-1}$ and $Q_{t+1}^*$. Rearranging, we have
\[
\psi^i_t \bigl(\bar{u}_t^{i,n-1},\bar{Q}_{t+1}^{n-1}, W_{t+1}^{u,n}\bigr) = \mathbf{E} \bigl[ \psi^i_t\bigl(\bar{u}^{i,n-1}_t,Q^*_{t+1},W_{t+1}\bigr)  \bigr] + \epsilon_{t+1}^{i,n} + \xi_{t+1}^{i,n},
\]
which implies that the update given in Step 4 of Algorithm \ref{alg:dq} can be rewritten as
\[
\bar{u}_t^{i,n} = \Pi_{\mathcal X_t^u} \Bigl \{ \bar{u}_t^{i,n-1} - \textnormal{diag}(\gamma_t^n) \, \Bigl[\mathbf{E} \bigl[ \psi^i_t\bigl(\bar{u}^{i,n-1}_t,Q^*_{t+1},W_{t+1}\bigr)  \bigr] + \epsilon_{t+1}^{i,n} + \xi_{t+1}^{i,n} \Bigr]\Bigr\}.
\]
Note that the term in the square brackets is a \emph{biased stochastic gradient} and observe that it is bounded (since $\psi^i_t$ only takes two finite values). For the present inductive step at time $t$, let us fix a state $(s,a)$. It now becomes convenient for us to view $\bar{u}_t^{i,n}(s,a)$ as a stochastic process in $n$, adapted to the filtration $\{\mathcal G_{t+1}^n\}_{n \ge 0}$ (since $\mathcal G_{t+1}^n \subseteq \mathcal G_{t+1}^{n+1} \subseteq \mathcal G_{t+1}^{n+2} \cdots$). It is clear that by the definition of $\epsilon_{t+1}^{i,n}(s,a)$:
\begin{equation}
\mathbf{E} \bigl[ \epsilon_{t+1}^{i,n}(s,a) \, | \bigr. \, \mathcal G_{t+1}^{n-1} \bigr ] =0 \quad a.s.
\label{eq:unbiased}
\end{equation}
Therefore, $\epsilon_{t+1}^n(s,a)$ are unbiased increments that can be referred to as \emph{martingale difference noise}.
Before continuing, notice the following useful fact:
\begin{equation}
\min_{a \in \mathcal A_s} Q_{t+1}^*(s,a) - \min_{a \in \mathcal A_s} \bar{Q}_{t+1}^{n-1}(s,a) \le \bigl\| \bar{Q}^{n-1}_{t+1} - Q_{t+1}^* \bigr\|_\infty.
\label{eq:minmax}
\end{equation}
The proof follows from $\min v = -\max \,(-v)$ and $\max v -\max w \le \max \, |v-w|$, where the minimum and maximum are taken over the components of some vectors $v$ and $w$. Now, let $S_{t+1}^{n}= S^M\bigl(s,a,W_{t+1}^{u,n}\bigr)$.
Expanding the definition of $\xi_{t+1}^{i,n}(s,a)$ and using (\ref{eq:minmax}), we obtain
\begin{align*}
 \xi_{t+1}^{i,n}&(s,a) \\
 &= \begin{aligned}[t] &\frac{1}{1-\alpha_i} \biggl[\,\indicate{c_t \bigl(s,a,W_{t+1}^{u,n}\bigr) + \! \! \min_{a' \in \mathcal A_{S^{n}_{t+1}}} \!\!\! Q^{*}_{t+1}\bigl(S_{t+1}^{n},a'\bigr) \ge \bar{u}^{i,n-1}_t(s,a)} \\
&  - \indicate{c_t\bigl(s,a,W_{t+1}^{u,n}\bigr) + \!\! \min_{a' \in \mathcal A_{S^{n}_{t+1}}} \!\!\! \bar{Q}^{n-1}_{t+1}\bigl(S^{n}_{t+1},a'\bigr) \ge \bar{u}^{i,n-1}_t(s,a)} \biggr] \end{aligned}\\
& \le \begin{aligned}[t]&\frac{1}{1-\alpha_i} \biggl[\, \indicate{c_t \bigl(s,a,W_{t+1}^{u,n}\bigr) + \!\! \min_{a' \in \mathcal A_{S^{n}_{t+1}}}  \!\!\! Q^{*}_{t+1}\bigl(S_{t+1}^{n},a'\bigr) \ge \bar{u}^{i,n-1}_t(s,a)} \\
&  - \indicate{c_t\bigl(s,a,W_{t+1}^{u,n}\bigr) + \!\! \min_{a' \in \mathcal A_{S^{n}_{t+1}}} \!\!\! Q^{*}_{t+1}\bigl(S^{n}_{t+1},a'\bigr) \ge \bar{u}^{i,n-1}_t(s,a) + \bigl\|  \bar{Q}^{n-1}_{t+1} - Q_{t+1}^* \bigr\|_\infty} \biggr]. \end{aligned}
\end{align*}
Using the shorthand $F_t(\,\cdot\,) = F_t(\,\cdot \<|\<s,a)$ and taking the conditional expectation of both sides, we get (almost surely)
\begin{align}
\bigl|\mathbf{E} \bigl[ \xi_{t+1}^{i,n}(s,a)  \, | \bigr. \, \mathcal G_{t+1}^{n-1} \bigr ] \bigr| &\le \frac{1}{1-\alpha_i} \Bigl |  F_t \Bigl( \bar{u}_t^{i,n-1}(s,a) + \bigl\|  \bar{Q}^{n-1}_{t+1} - Q_{t+1}^*  \bigr \|_\infty\Bigr) - F_t \bigl( \bar{u}_t^{i,n-1}(s,a)\bigr)   \Bigr|\nonumber\\
&\le \frac{L_F}{1-\alpha_i} \, \bigl\| \bar{Q}^{n-1}_{t+1} - Q_{t+1}^* \bigr \|_\infty,
\label{eq:xibound}
\end{align}
where the second inequality follows by Assumption \ref{ass:problem}(iii).
Since we assumed in the statement of the lemma that $\bar{Q}_{t+1}^n \rightarrow Q_{t+1}^*$ almost surely, it must be the case that
\begin{equation}
\mathbf{E} \bigl[ \xi_{t+1}^{i,n}(s,a) \, \bigl| \bigr. \, \mathcal G_{t+1}^{n-1} \bigr ] \rightarrow 0 \quad a.s.,
 \label{eq:xi_negligible}
\end{equation}
and hence the noise ``vanishes asymptotically'' in expectation.
Now, given the boundedness of $\psi_t^i$ (and hence, finite second moment of $\psi_t^i$), the unbiasedness property (\ref{eq:unbiased}), the vanishing noise property (\ref{eq:xi_negligible}), the stepsize and sampling properties of Assumption \ref{ass:algorithm}, and the uniqueness of $u_t^*(s,a)$ from Assumption \ref{ass:problem}(iii), we can apply a classical theorem of stochastic approximation, \cite[Theorem 2.4]{Kushner2003}, to conclude that $\bar{u}_t^{i,n}(s,a) \rightarrow u_t^{i,*}(s,a)$ almost surely for each $i$. Because we chose an arbitrary $(s,a)$, this convergence holds for all $(s,a) \in \mathcal U$.
\end{proof}

\begin{restatable}[Almost Sure Convergence]{theorem}{thmasconv}
Choose initial approximations $\bar{Q}^0 \in \mathbb R^D$ and $\bar{u}^{i,0} \in \mathbb R^D$ for each $i \in \{1,2,\ldots,m\}$ so that $\bar{Q}_t^0 \in \mathcal X_t^q$ and $\bar{u}_t^{i,0} \in \mathcal X_t^u$ for all $t \in \mathcal T$. Under Assumptions \ref{ass:algorithm}--\ref{ass:problem},  Algorithm \ref{alg:dq} generates a sequence of iterates $\bar{Q}^n$ that converge almost surely to the optimal value function $Q^*$.\label{thm:convcvar}
\label{thm:asconv}
\end{restatable}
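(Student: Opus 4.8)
The plan is to prove, by \emph{backward induction on} $t$, the stronger joint claim that $\bar{Q}_t^n \to Q_t^*$ and $\bar{u}_t^{i,n}\to u_t^{i,*}$ almost surely for every $t\in\{0,1,\ldots,T\}$ and every $i\in\{1,\ldots,m\}$; the theorem is then the conjunction over $t=0,\ldots,T$, which is a probability-one event because $T<\infty$ and only finitely many almost-sure events are intersected. The base case $t=T$ is trivial, since Step 1 of Algorithm \ref{alg:dq} sets $\bar{Q}_T^n = 0 = Q_T^*$ for all $n$. For the inductive step, assume $\bar{Q}_{t+1}^n\to Q_{t+1}^*$ almost surely; this is exactly the hypothesis of Lemma \ref{lem:uconv}, so that lemma immediately gives $\bar{u}_t^{i,n}\to u_t^{i,*}$ almost surely for each $i$ (hence $\bar{u}_t^{i,n-1}\to u_t^{i,*}$ as well). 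It then remains to upgrade this to $\bar{Q}_t^n\to Q_t^*$ almost surely.

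For the $\bar{Q}_t$ recursion I would reuse the noise-decomposition device from the proof of Lemma \ref{lem:uconv}. Fix $(s,a)\in\mathcal U$ and split the Step 5 target $\hat{q}_t^n$ into its mean under the \emph{true} value function and quantiles, an unbiased statistical error $\epsilon_{t+1}^{q,n}$, and a biased approximation error $\xi_{t+1}^{q,n}$:
\begin{align*}
\epsilon_{t+1}^{q,n} &= H_t\bigl(u^{1,*}_t,\ldots,u^{m,*}_t,Q^*_{t+1},W_{t+1}^{q,n}\bigr) - \mathbf{E}\bigl[H_t\bigl(u^{1,*}_t,\ldots,u^{m,*}_t,Q^*_{t+1},W_{t+1}\bigr)\bigr],\\
\xi_{t+1}^{q,n} &= H_t\bigl(\bar{u}^{1,n-1}_t,\ldots,\bar{u}^{m,n-1}_t,\bar{Q}^{n-1}_{t+1},W_{t+1}^{q,n}\bigr) - H_t\bigl(u^{1,*}_t,\ldots,u^{m,*}_t,Q^*_{t+1},W_{t+1}^{q,n}\bigr).
\end{align*}
By the optimal recursion (\ref{eq:optimalrecursion}), $\hat{q}_t^n = Q_t^* + \epsilon_{t+1}^{q,n} + \xi_{t+1}^{q,n}$, so the Step 6 update is the projected stochastic-approximation recursion
\[
\bar{Q}_t^n = \Pi_{\mathcal X_t^q}\Bigl\{\bar{Q}_t^{n-1} - \textnormal{diag}(\eta_t^n)\bigl[(\bar{Q}_t^{n-1}-Q_t^*) - \epsilon_{t+1}^{q,n} - \xi_{t+1}^{q,n}\bigr]\Bigr\},
\]
whose componentwise mean ODE is $\dot{x} = -\bigl(x - Q_t^*(s,a)\bigr)$, with unique, globally asymptotically stable equilibrium $Q_t^*(s,a)\in\mathcal X_t^q(s,a)$ (Assumption \ref{ass:algorithm}(vi)). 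Unlike risk-neutral $Q$-learning, this target involves only quantities whose convergence the induction (via Lemma \ref{lem:uconv}) has already delivered, so the recursion at time $t$ is a stochastic average toward a constant rather than a coupled fixed-point iteration.

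The remaining work is to check, componentwise, the hypotheses of the classical stochastic-approximation theorem \cite[Theorem 2.4]{Kushner2003}, exactly as in Lemma \ref{lem:uconv}: (i) \emph{bounded iterates}, from the projection onto the compact $\mathcal X_t^q(s,a)$; (ii) \emph{martingale-difference noise}---conditioning on the natural $\sigma$-algebra of all information available just before the draws $W_{t+1}^{u,n},W_{t+1}^{q,n}$ at iteration $n$, time $t$ (containing $(S_t^n,a_t^n)$, the current approximations, and the stepsizes, but not the fresh Monte Carlo samples), the quantities $u^{i,*}_t$ and $Q^*_{t+1}$ are deterministic and $W_{t+1}^{q,n}$ is independent of that $\sigma$-algebra, so the conditional mean of $\epsilon_{t+1}^{q,n}(s,a)$ is zero, the analogue of (\ref{eq:unbiased}); (iii) \emph{bounded second moment}, since Lipschitzness of $\Phi$ (Assumption \ref{ass:problem}(i)) gives $|H_t(u^{1,*}_t,\ldots,u^{m,*}_t,Q^*_{t+1},W_{t+1})(s,a)| \le |\Phi(0)| + L_\Phi\bigl(|c_t(s,a,W_{t+1})| + \|Q^*_{t+1}\|_\infty + \sum_i|u^{i,*}_t(s,a)|\bigr)$, and Assumption \ref{ass:problem}(ii) bounds $\mathbf{E}[c_t(s,a,W_{t+1})^2]$, whence $\sup_n\mathbf{E}[\epsilon_{t+1}^{q,n}(s,a)^2]<\infty$ uniformly in $n$ (note $u^{i,*}_t,Q^*_{t+1}$ do not depend on $n$); (iv) \emph{vanishing bias}---combining the $\ell_1$-Lipschitzness of $\Phi$ with the min-max bound (\ref{eq:minmax}) gives the \emph{pathwise} estimate
\[
\bigl|\xi_{t+1}^{q,n}(s,a)\bigr| \le L_\Phi\Bigl(\bigl\|\bar{Q}^{n-1}_{t+1}-Q^*_{t+1}\bigr\|_\infty + \sum_{i=1}^m\bigl|\bar{u}^{i,n-1}_t(s,a)-u^{i,*}_t(s,a)\bigr|\Bigr),
\]
whose right-hand side tends to $0$ almost surely by the inductive hypothesis and Lemma \ref{lem:uconv}---a strengthening of the in-expectation statement (\ref{eq:xi_negligible}) used for the $\bar{u}$-iterates; (v) the \emph{stepsize conditions} Assumption \ref{ass:algorithm}(iii)--(iv) and the \emph{exploration condition} Assumption \ref{ass:algorithm}(v), which by the Extended Borel--Cantelli Lemma \citep{Breiman1992} guarantees that $(s,a)$---hence $\bar{Q}_t^n(s,a)$---is updated infinitely often; and (vi) \emph{uniqueness of the equilibrium}, noted above. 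Theorem 2.4 of \cite{Kushner2003} then yields $\bar{Q}_t^n(s,a)\to Q_t^*(s,a)$ almost surely; since $(s,a)\in\mathcal U$ was arbitrary, $\bar{Q}_t^n\to Q_t^*$ almost surely, closing the induction.

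I expect the main obstacle to be not any individual estimate---the argument closely mirrors Lemma \ref{lem:uconv}---but the sequencing and the measurability bookkeeping. First, Lemma \ref{lem:uconv} must be invoked \emph{before} the $\bar{Q}_t$ analysis, because the approximation error $\xi_{t+1}^{q,n}$ simultaneously couples the error $\bar{Q}_{t+1}^{n-1}-Q^*_{t+1}$ \emph{and} the errors $\bar{u}_t^{i,n-1}-u^{i,*}_t$; one needs the min-max bound (\ref{eq:minmax}) to show this combined bias vanishes pathwise, so it can be absorbed as an asymptotically negligible perturbation of the mean ODE. Second, as in Lemma \ref{lem:uconv}, one must choose carefully a filtration with respect to which the stepsize $\eta_t^n$ is measurable while the draw $W_{t+1}^{q,n}$ is still a fresh independent sample---so that $\epsilon_{t+1}^{q,n}$ is a genuine martingale difference and, simultaneously, $\hat{q}_t^n$ is expressed in terms of the correctly-lagged iterates $\bar{u}_t^{i,n-1}$ and $\bar{Q}_{t+1}^{n-1}$ rather than $\bar{u}_t^{i,n}$.
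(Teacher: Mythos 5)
Your proposal is correct and follows essentially the same route as the paper's own proof: backward induction on $t$, invoking Lemma \ref{lem:uconv} under the induction hypothesis, decomposing the Step 5 target into an unbiased statistical error and a biased approximation error bounded via the Lipschitzness of $\Phi$ and (\ref{eq:minmax}), and then applying \cite[Theorem 2.4]{Kushner2003} componentwise. The only differences are cosmetic (your $\epsilon_{t+1}^{q,n},\xi_{t+1}^{q,n}$ carry the opposite sign convention, and you note the pathwise rather than conditional-expectation vanishing of the bias), so no changes are needed.
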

\begin{proof}[Sketch of Proof:] The idea of the proof is to induct backwards on $t$ and repeatedly apply Lemma \ref{lem:uconv}. At each step, the induction hypothesis is that $\bar{Q}_{t+1}^n \rightarrow Q_{t+1}^*$ from which we obtain convergence of $\bar{u}_t^{i,n} \rightarrow u_t^{i,*}$ from Lemma \ref{lem:uconv}. After making sure certain technical details are satisfied, stochastic approximation theory allows us to show that $\bar{Q}_{t}^n \rightarrow Q_{t}^*$. The full details are given in Appendix \ref{sec:appendix}.
\end{proof}

\subsection{Convergence Rate}
\label{sec:algconvrate}
In this section, we discuss the convergence rate (in terms of the expected deviation to $Q^*$) of the procedure described in Algorithm \ref{alg:dq}. Because we are working with an asynchronous algorithm where only one state per time period is visited every iteration, it is necessary for us to specify the assumed \emph{state sampling policy} for visiting states before deriving convergence rate results. Due to Assumption \ref{ass:algorithm}(i)--(ii), we naturally must consider the sampling policy and the stepsize sequences jointly, as is done in the upcoming proposition. We employ the $\varepsilon$\emph{-greedy} policy, a popular choice that balances exploitation and exploration using a tunable parameter $\varepsilon$; it is defined as follows. For any iteration $n>0$, choose $(S_0^n,a_0^n)$ independently and uniformly at random. At iteration $n$, time $t > 0$, let 
\[
s_{t+1}^n = S^M(S_{t}^n, a_t^n, W_{t+1}^n) \quad \mbox{and} \quad a^n_{t+1} = \argmin_{a \in \mathcal A} \bar{Q}^{n-1}_{t+1}(s_{t+1}^n, a)
\]
and define a Bernoulli random variable $X_{t+1}^n$ with parameter $1-\varepsilon d$ that is independent from $\mathcal G_t^n$ (i.e., the rest of the system).
The next state to visit is selected by the rule
\begin{equation}
\bigl(S_{t+1}^n, a_{t+1}^n\bigr) = \begin{cases} \bigl(s^n_{t+1}, a^n_{t+1}\bigr) &\mbox{if } X_{t+1}^n = 1,\\ 
\mbox{Choose uniformly over } \, \mathcal U & \mbox{otherwise.}
\end{cases}
\label{eq:epsilongreedy}
\end{equation}
Note that Assumption \ref{ass:algorithm}(v) is clearly satisfied as each state is visited with probability at least $\varepsilon$.
We also choose our stepsize sequences with $\tilde{\gamma}_t^{n-1}$ and $\tilde{\eta}_t^{n-1}$ as \emph{deterministic harmonic sequences} (for ease of analysis) so that
\begin{equation}
\gamma_t^n(s,a) = \frac{\gamma_t}{n} \, \mathbf{1}_{\{(s,a) = (S_t^n,a_t^n) \}} \quad \mbox{and} \quad \eta_t^n(s,a) = \frac{\eta_t}{n} \, \mathbf{1}_{\{(s,a) = (S_t^n,a_t^n) \}},
\label{eq:harmonic_step}
\end{equation}
where $\gamma_t > 0$ and $\eta_t > 0$ are deterministic, time-dependent constants. Since neither $\tilde{\gamma}_t^{n-1} = \gamma_t^n/n$ nor $\tilde{\eta}_t^n = \eta_t^n/n$ depend on the history of visited states, they are known as \emph{centralized learning rates} (see \cite{Szepesvari1996} for a discussion). 
The main difficulty for the centralized stepsizes of (\ref{eq:harmonic_step}) is that when the frequency of visits to a state decays quickly enough, then Assumption \ref{ass:algorithm}(iii) and (iv) may not hold (for example, consider when a state is only visited on iteration numbers that are powers of two). In fact, it is often not immediately obvious when Assumption \ref{ass:algorithm}(iii) and (iv) are satisfied. The next proposition shows that under the $\varepsilon$-greedy sampling policy, the states are visited often enough that the assumption remains satisfied.
\begin{restatable}{proposition}{propepsgreedy}
Under the $\varepsilon$-greedy sampling policy given in (\ref{eq:epsilongreedy}) and the deterministic harmonic stepsizes given in (\ref{eq:harmonic_step}), Assumption \ref{ass:algorithm}(iii)--(iv) is satisfied.
\label{prop:sampling_step}
\end{restatable}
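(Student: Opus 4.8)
The plan is to first strip away everything routine: Assumption~\ref{ass:algorithm}(iv) is immediate, since $\gamma_t^n(s,a)^2 = (\gamma_t^2/n^2)\,\mathbf{1}_{\{(s,a)=(S_t^n,a_t^n)\}} \le \gamma_t^2/n^2$ and likewise $\eta_t^n(s,a)^2 \le \eta_t^2/n^2$, so both series are dominated termwise by $\gamma_t^2\sum_n n^{-2}$ (resp.\ $\eta_t^2\sum_n n^{-2}$), which is a finite deterministic bound. Hence the entire content of the proposition is the divergence claim in Assumption~\ref{ass:algorithm}(iii); and since $\gamma_t>0$ and $\eta_t>0$, it is enough to show that for each fixed $(s,a)\in\mathcal U$ and $t\in\mathcal T$,
\[
\sum_{n=1}^\infty \frac{1}{n}\,\mathbf{1}_{\{(S_t^n,a_t^n) = (s,a)\}} = \infty \quad \text{a.s.}
\]
There are finitely many pairs $(s,a)$ and finitely many stages $t$, so proving this for a fixed one yields the claim simultaneously for all.

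Fix $(s,a)$ and $t$, write $Z_k = \mathbf{1}_{\{(S_t^k,a_t^k)=(s,a)\}}$, and let $\mathcal F_k$ be the history $\sigma$-field just before the pair $(S_t^k,a_t^k)$ is selected --- that is, $\mathcal F_k = \mathcal G_{t-1}^k$ if $t\ge1$ and $\mathcal F_k = \mathcal G_T^{k-1}$ if $t=0$. Using the nesting $\mathcal G_t^n\subseteq\mathcal G_{t+1}^n$ and $\mathcal G_T^n\subseteq\mathcal G_0^{n+1}$, one checks that $\mathcal F_k\subseteq\mathcal F_{k+1}$ and that $Z_k$ is $\mathcal F_{k+1}$-measurable, so $(Z_k)_{k\ge1}$ is adapted to $(\mathcal F_{k+1})_{k\ge1}$ with $\mathcal F_k$ acting as the ``predictable'' $\sigma$-field. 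The key input is that under the $\varepsilon$-greedy rule~(\ref{eq:epsilongreedy}) the conditional visit probabilities stay bounded away from zero, i.e.\ $\mathbf E[Z_k\mid\mathcal F_k] = \mathbf P\bigl((S_t^k,a_t^k)=(s,a)\mid\mathcal F_k\bigr) \ge \varepsilon$ a.s.; this is exactly the remark immediately preceding the proposition (the exploration coin in~(\ref{eq:epsilongreedy}) is independent of the rest of the system and puts probability at least $\varepsilon$ on each pair regardless of the greedy choice), so Assumption~\ref{ass:algorithm}(v) holds for this policy.

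With that, the rest is a standard martingale argument tailored to harmonic weights. Put $D_k = \tfrac1k\bigl(Z_k - \mathbf E[Z_k\mid\mathcal F_k]\bigr)$, so that $\mathbf E[D_k\mid\mathcal F_k]=0$ and $|D_k|\le 1/k$; the partial sums $M_n=\sum_{k=1}^n D_k$ form a martingale with $\mathbf E[M_n^2]=\sum_{k=1}^n\mathbf E[D_k^2]\le\sum_{k=1}^n k^{-2}\le\pi^2/6$. Hence $(M_n)$ is $L^2$-bounded and converges a.s.\ to a finite limit. Consequently, a.s.,
\[
\sum_{k=1}^n \frac{Z_k}{k} \;=\; M_n + \sum_{k=1}^n \frac{\mathbf E[Z_k\mid\mathcal F_k]}{k} \;\ge\; M_n + \varepsilon\sum_{k=1}^n \frac1k \;\longrightarrow\; \infty ,
\]
since $M_n$ converges while the harmonic series diverges. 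This gives the displayed divergence; multiplying through by $\gamma_t$ (resp.\ $\eta_t$) recovers Assumption~\ref{ass:algorithm}(iii) for $\gamma_t^n$ (resp.\ $\eta_t^n$), and together with the deterministic bound for~(iv) the proposition follows.

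The only genuinely delicate point --- the one worth spelling out --- is the dependency structure: the indicators $Z_k$ are highly correlated, being generated by the coupled recursions of Algorithm~\ref{alg:dq} together with the state-dependent greedy choice, so a plain Borel--Cantelli or strong-law argument is unavailable. The fix is to replace ``independence'' with ``conditional probabilities bounded below by $\varepsilon$'' (Assumption~\ref{ass:algorithm}(v)) and to push the dependence into an $L^2$-bounded martingale whose a.s.\ convergence is essentially free precisely because the weights are $1/k$ and $\sum_k k^{-2}<\infty$. This is also why the $1/n$ (centralized harmonic) step sizes of~(\ref{eq:harmonic_step}) are the natural choice here: a slower decay such as $1/\sqrt{k}$ would not make the martingale $L^2$-bounded, and this line of argument would not go through.
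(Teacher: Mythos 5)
Your proof is correct, but it takes a genuinely different route from the paper's. The paper argues through visit times: it defines the interarrival times $\Delta_t^k$ between successive visits to $(s,a)$, uses the independence of the exploration coin in (\ref{eq:epsilongreedy}) to bound $\mathbf{P}(\Delta_t^k \ge \delta_t^k) \le (1-\varepsilon)^{\lceil \delta_t^k\rceil} \le k^{-2}$ with $\delta_t^k = \mathcal O(\log k)$, invokes Borel--Cantelli to conclude that eventually $\Delta_t^k \le \delta_t^k$, hence that the $k$-th visit occurs by iteration $N_t^k \le \mathcal O(k\log k)$, and finally notes that $\sum_k \gamma_t / N_t^k = \infty$. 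You instead decompose the weighted visit sum $\sum_k Z_k/k$ into an $L^2$-bounded martingale (increments bounded by $1/k$, second moments summable) plus a compensator bounded below by $\varepsilon \sum_k 1/k$, and let martingale convergence do the work. Your argument is shorter, avoids the interarrival-time bookkeeping, and -- notably -- uses only the conditional lower bound $\mathbf{E}[Z_k \mid \mathcal F_k] \ge \varepsilon$, i.e.\ Assumption \ref{ass:algorithm}(v) itself, so it proves the conclusion for \emph{any} sampling policy satisfying (v), not just $\varepsilon$-greedy; the paper's proof, by contrast, leans on the specific independence structure of the $\varepsilon$-greedy coin but yields extra quantitative information (the a.s.\ envelope $N_t^k \le \mathcal O(k \log k)$ on visit times) that a martingale argument does not directly provide. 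One minor caveat: your closing remark that the argument "would not go through" for $1/\sqrt{k}$ weights is a bit loose -- the martingale is then no longer $L^2$-bounded, but a strong law for martingales (e.g.\ normalizing by the compensator and using Kronecker's lemma) would still give divergence; this is side commentary, though, and does not affect the proof of the stated proposition.
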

\begin{proof}
See Appendix \ref{sec:appendix}.
\end{proof}

For a technical reason needed to prove the convergence rate results, we need to constrain the iterates of the algorithm to a region \emph{within} the support of the distribution of future costs (stated formally in Assumption \ref{ass:strongconv} below). We shall see that when this assumption is satisfied, we get a notion of \emph{strong convexity}. To be more precise, observe that $\partial^2 \mathbf{E} [u +  (X-u)^+/(1-\alpha_i)  ]/\partial u^2 = f_X(u)/(1-\alpha)$, where $f_X$ is the density of $X$. If $u$ is constrained to be within the support of the distribution of $X$ (i.e., where $f_X > 0$), then we are able to lower bound the second derivative by some constant (that depends on the constraint set), thereby attaining strong convexity within the region. This is useful for deriving the convergence rate results (see Lemma \ref{lem:bound1}); unfortunately, this condition is in general difficult to check in practice.

\begin{assumption} The density $f_t(x\<|\<s,a) = dF_t(x\<|\<s,a)/dx $ exists and the stochastic approximation projection set is within the support of the density: $\mathcal X_t^{u}(s,a) \subseteq \textnormal{supp}\bigl(f_t(\,\cdot \<|\<s,a)\bigr)$ for all $(s,a) \in \mathcal U$ and $t \in \mathcal T$. Let
\[
l_f = \min_{(s,a,t)} \, \min \bigl \{ x \in \mathcal X_t^u(s,a) : f_t(x\<|\<s,a) \bigr\}
\]
be a positive real number that lower bounds the density function over all $t$ and $(s,a) \in \mathcal U$.
\label{ass:strongconv}
\end{assumption}

We now provide a few lemmas that will be useful in establishing the final result. The first lemma relates the error of $\bar{u}_t^{i,n}$ to the error in the last iteration (i.e., of $\bar{u}_t^{i,n-1}$) and the error of the value function in the next time period (i.e., of $\bar{Q}_{t+1}^{n-1}$).

\begin{lemma}
Under Assumptions \ref{ass:algorithm}(vi)--\ref{ass:strongconv}, the $\varepsilon$-greedy sampling policy of (\ref{eq:epsilongreedy}), and the deterministic harmonic stepsizes given in (\ref{eq:harmonic_step}), the sequence of approximations $\bar{u}_t^{i,n}$ generated by Algorithm \ref{alg:dq} satisfies, for any $\kappa > 0$,
\begin{align*}
\mathbf{E} \Bigl[ &\bigl\|\bar{u}_t^{i,n} - u_t^{i,*} \bigr  \|_2^2   \Bigr] \\
&\le \left[1- \frac{\gamma_t}{n} \left(  2\,\varepsilon \, C_{l_f} - \kappa \, C_{L_F}\right)\right] \, \mathbf{E} \Bigl[ \bigl\|\bar{u}_t^{i,n-1} - u_t^{i,*} \bigr \|_2^2   \Bigr] + \frac{\gamma_t}{n \, \kappa} \, \mathbf{E} \Bigl[ \bigl \|\bar{Q}_{t+1}^{n-1} - Q_{t+1}^* \bigr  \|_2^2   \Bigr] +C_{\alpha_i} \, \frac{\gamma_t^2}{n^2},
\end{align*}
where $C_{l_f} = \frac{l_f}{1-\alpha_i}$,  $C_{L_F} = \frac{L_F^2}{(1-\alpha_t)^2}$, and $C_{\alpha_i} =  \Bigl[\max \bigl( 1, \frac{\alpha_i}{1-\alpha_i}\bigr) \Bigr]^2$.
\label{lem:bound1}
\end{lemma}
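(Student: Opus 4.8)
The plan is to analyze the single stochastic-approximation update in Step 4 of Algorithm \ref{alg:dq} for a fixed state-action pair $(s,a)$, square the error, take conditional expectations, and then exploit strong convexity of the CVaR objective coming from Assumption \ref{ass:strongconv}. Concretely, write the update using the decomposition already introduced in the proof of Lemma \ref{lem:uconv}: on the event $\{(S_t^n,a_t^n)=(s,a)\}$,
\[
\bar{u}_t^{i,n}(s,a) = \Pi_{\mathcal X_t^u(s,a)}\Bigl\{ \bar{u}_t^{i,n-1}(s,a) - \tfrac{\gamma_t}{n}\bigl[ g_t^{i,n-1}(s,a) + \epsilon_{t+1}^{i,n}(s,a) + \xi_{t+1}^{i,n}(s,a)\bigr]\Bigr\},
\]
where $g_t^{i,n-1}(s,a) = \mathbf{E}[\psi_t^i(\bar{u}_t^{i,n-1},Q_{t+1}^*,W_{t+1})(s,a)] = 1 - \tfrac{1}{1-\alpha_i}\bigl(1 - F_t(\bar{u}_t^{i,n-1}(s,a)\<|\<s,a)\bigr)$ is the true (deterministic given $\mathcal G_{t+1}^{n-1}$) gradient of the smooth convex objective $u\mapsto \mathbf{E}[u + (1-\alpha_i)^{-1}(X-u)^+]$ evaluated at $\bar u_t^{i,n-1}(s,a)$. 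Since $u_t^{i,*}(s,a)\in\mathcal X_t^u(s,a)$ is the minimizer and the projection is nonexpansive, drop the projection at the cost of an inequality, expand $\|\cdot\|_2^2$ as a sum over $(s,a)$, and use Assumption \ref{ass:algorithm}(i) together with the $\varepsilon$-greedy sampling probabilities to handle the indicator: conditioning on $\mathcal G_t^{n}$ (or the relevant sub-$\sigma$-field), the probability of visiting $(s,a)$ is at least $\varepsilon$, which is where the factor $2\varepsilon C_{l_f}$ will come from.

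The key steps, in order: (1) Derive the one-coordinate recursion $\mathbf{E}[(\bar u_t^{i,n}(s,a)-u_t^{i,*}(s,a))^2 \mid \cdot] \le (\cdot)^2 - 2\tfrac{\gamma_t}{n}\,\mathbf{1}\{\text{visit}\}\,(\bar u_t^{i,n-1}(s,a)-u_t^{i,*}(s,a))\bigl(g_t^{i,n-1}(s,a)+\mathbf{E}[\xi_{t+1}^{i,n}(s,a)\mid\cdot]\bigr) + \tfrac{\gamma_t^2}{n^2}\mathbf{1}\{\text{visit}\}\,\mathbf{E}[(\text{biased gradient})^2\mid\cdot]$, using unbiasedness (\ref{eq:unbiased}) of $\epsilon_{t+1}^{i,n}$ to kill the cross term with the statistical noise. (2) Strong convexity: because $g_t^{i,n-1}(s,a)$ is the gradient at $\bar u_t^{i,n-1}(s,a)$ of a function whose second derivative is $f_t(\cdot\<|\<s,a)/(1-\alpha_i)\ge l_f/(1-\alpha_i)=C_{l_f}$ on $\mathcal X_t^u(s,a)$ by Assumption \ref{ass:strongconv}, and $u_t^{i,*}(s,a)$ is the minimizer, monotonicity of the gradient gives $(\bar u_t^{i,n-1}(s,a)-u_t^{i,*}(s,a))\,g_t^{i,n-1}(s,a) \ge C_{l_f}\,(\bar u_t^{i,n-1}(s,a)-u_t^{i,*}(s,a))^2$; combined with $\mathbf{P}(\text{visit}\mid\cdot)\ge\varepsilon$ this produces the $2\varepsilon C_{l_f}\,\tfrac{\gamma_t}{n}$ contraction term. (3) Control the bias cross term: by the bound (\ref{eq:xibound}), $|\mathbf{E}[\xi_{t+1}^{i,n}(s,a)\mid\mathcal G_{t+1}^{n-1}]| \le \tfrac{L_F}{1-\alpha_i}\|\bar Q_{t+1}^{n-1}-Q_{t+1}^*\|_\infty \le \tfrac{L_F}{1-\alpha_i}\|\bar Q_{t+1}^{n-1}-Q_{t+1}^*\|_2$, and then apply Young's inequality $2|ab|\le \kappa a^2 + \kappa^{-1}b^2$ with $a=\bar u_t^{i,n-1}(s,a)-u_t^{i,*}(s,a)$ and $b=\tfrac{L_F}{1-\alpha_i}\|\bar Q_{t+1}^{n-1}-Q_{t+1}^*\|_2$: this yields the $-\kappa C_{L_F}$ adjustment to the contraction coefficient and the $\tfrac{\gamma_t}{n\kappa}\mathbf{E}\|\bar Q_{t+1}^{n-1}-Q_{t+1}^*\|_2^2$ term (noting $C_{L_F}=L_F^2/(1-\alpha_i)^2$; the statement's $(1-\alpha_t)$ is presumably a typo for $(1-\alpha_i)$). (4) Bound the noise/variance term: the biased stochastic gradient $g+\epsilon+\xi$ equals $\psi_t^i$ which takes only the two values $1$ and $1-\tfrac{1}{1-\alpha_i} = -\tfrac{\alpha_i}{1-\alpha_i}$, so its square is at most $\bigl[\max(1,\tfrac{\alpha_i}{1-\alpha_i})\bigr]^2 = C_{\alpha_i}$, giving the $C_{\alpha_i}\gamma_t^2/n^2$ remainder. (5) Sum the coordinate recursions over $(s,a)\in\mathcal U$ and take total expectations; since the contraction coefficient $1-\tfrac{\gamma_t}{n}(2\varepsilon C_{l_f}-\kappa C_{L_F})$ and the other constants do not depend on $(s,a)$, and $\sum_{(s,a)}\mathbf{E}[(\cdot)^2]=\mathbf{E}\|\cdot\|_2^2$, the claimed inequality follows.

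The main obstacle I anticipate is the bookkeeping around conditioning and measurability needed to turn the indicator $\mathbf{1}\{(S_t^n,a_t^n)=(s,a)\}$ into the factor $\varepsilon$ cleanly: one must condition on a $\sigma$-field with respect to which $\bar u_t^{i,n-1}(s,a)$, $\bar Q_{t+1}^{n-1}$, and hence $g_t^{i,n-1}(s,a)$ and the bias bound are measurable, while the sampling indicator still has conditional probability $\ge\varepsilon$ and the statistical-error cross term still vanishes; under the $\varepsilon$-greedy policy (\ref{eq:epsilongreedy}) and Assumption \ref{ass:algorithm}, conditioning on $\mathcal G_t^n$ (for the cross terms, then further on the extra randomization) makes this work, but the argument must be stated carefully. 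A secondary subtlety is justifying that the gradient-monotonicity inequality in step (2) uses only the value of $f_t$ on the segment between $\bar u_t^{i,n-1}(s,a)$ and $u_t^{i,*}(s,a)$, which lies in the convex set $\mathcal X_t^u(s,a)$, so the lower bound $l_f$ indeed applies; and keeping $\kappa$ free throughout (rather than optimizing it) so the statement holds for all $\kappa>0$ as claimed.
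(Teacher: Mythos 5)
Your proposal is correct and follows essentially the same route as the paper's proof: nonexpansiveness of the projection, the decomposition of $\psi_t^i$ into the true gradient plus the unbiased noise $\epsilon_{t+1}^{i,n}$ and bias $\xi_{t+1}^{i,n}$, the density lower bound $l_f$ for the contraction with the $\varepsilon$ factor from the sampling policy, the bound (\ref{eq:xibound}) with Young's inequality in $\kappa$ for the bias term, and the two-valued gradient for the $C_{\alpha_i}\gamma_t^2/n^2$ remainder. The only cosmetic difference is that you work coordinate-wise and sum at the end, whereas the paper argues directly in vector form (and your reading of $(1-\alpha_t)$ as a typo for $(1-\alpha_i)$ is consistent with the paper's proof).
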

\begin{proof}
Fix an $i$. Recall the update equation given in Step 5 of Algorithm \ref{alg:dq}:
\begin{equation*}
\bar{u}_t^{i,n} = \Pi_{\mathcal X_t^u} \Bigl \{ \bar{u}_t^{i,n-1} - \textnormal{diag}(\gamma_t^n) \, \psi^i_t \bigl(\bar{u}_t^{i,n-1},\bar{Q}_{t+1}^{n-1}, W_{t+1}^{u,n}\bigr)\Bigr\}.
\end{equation*}
Using the non-expansive property of the projection operator, we have
\begin{align}
\bigl \|\bar{u}_t^{i,n}-u_t^{i,*} \bigr\|_2^2 &= \left\| \Pi_{\mathcal X_t^u} \Bigl \{ \bar{u}_t^{i,n-1} - \diag(\gamma_t^{n}) \,  \psi^i_t \bigl(\bar{u}_t^{i,n-1},\bar{Q}_{t+1}^{n-1},W_{t+1}^{u,n}\bigr) \Bigr\} -  \Pi_{\mathcal X_t^u} \bigl \{ u_t^{i,*} \bigr\} \right \|_2^2\nonumber \\
&\le \bigl\| \bar{u}_t^{i,n-1} - u_t^{i,*} - \diag(\gamma_t^{n}) \,  \psi^i_t \bigl(\bar{u}_t^{i,n-1},\bar{Q}_{t+1}^{n-1},W_{t+1}^{u,n}\bigr) \bigr \|_2^2 \nonumber\\
&\le \begin{aligned}[t]\bigl\| \bar{u}_t^{i,n-1} &- u_t^{i,*}  \bigr\|_2^2 + C_{\alpha_i} \, \frac{\gamma_t^2}{n^2} \\
&- 2 \, (\bar{u}_t^{i,n-1} - u_t^{i,*})^\mathsf{T} \diag(\gamma_t^{n})\, \psi^i_t \bigl(\bar{u}_t^{i,n-1},\bar{Q}_{t+1}^{n-1},W_{t+1}^{u,n}\bigr).\end{aligned}\label{eq:crossterm}
\end{align}
Recall from the proof of Lemma \ref{lem:uconv} that we can write 
\[
\psi^i_t \bigl(\bar{u}_t^{i,n-1},\bar{Q}_{t+1}^{n-1},W_{t+1}^{u,n}\bigr) = \mathbf{E} \bigl[ \psi^i_t\bigl(\bar{u}^{i,n-1}_t,Q^*_{t+1},W_{t+1}\bigr)  \bigr]+  \epsilon_{t+1}^{i,n} + \xi_{t+1}^{i,n},
\]
from which we see that the cross-term $(\bar{u}_t^{i,n-1} - u_t^{i,*})^\mathsf{T} \diag(\gamma_t^{n})\, \psi^i_t \bigl(\bar{u}_t^{i,n-1},\bar{Q}_{t+1}^{n-1},W_{t+1}^{u,n}\bigr)$ in the chain of inequalities (\ref{eq:crossterm}) can be expanded into three terms. These terms can now be analyzed separately. By (\ref{eq:psi_def}) and Assumption \ref{ass:problem}(iv), we see that the derivative of $\mathbf{E} \bigl[ \psi^i_t\bigl(u,Q^*_{t+1},W_{t+1}\bigr)  \bigr]$ in $u(s,a)$ can be expressed as
\begin{equation}
 \frac{\partial}{\partial u(s,a)}\mathbf{E} \bigl[ \psi^i_t\bigl(u,Q^*_{t+1},W_{t+1}\bigr)  \bigr](s,a) = \frac{f_t(u(s,a)\<|\<s,a)}{1-\alpha_i} \ge \frac{l_f}{1-\alpha_i}.
 \label{eq:derivativebound}
\end{equation}
Since every state is visited with probability larger than $\varepsilon$, we know that $\mathbf{E}\bigl[  \gamma_t^n(s,a) \, \bigl| \, \mathcal G_{t+1}^{n-1} \bigl] \ge \frac{\varepsilon \gamma_t}{n}$. Combining this with (\ref{eq:derivativebound}) and $\mathbf{E} \bigl[ \psi^i_t\bigl(u^{*}_t,Q^*_{t+1},W_{t+1}\bigr) = 0$, it follows that (almost surely)
\begin{align}
\mathbf{E} \Bigl[ &(\bar{u}_t^{i,n-1} - u_t^{i,*})^\mathsf{T} \diag(\gamma_t^{n})\,\mathbf{E} \bigl[ \psi^i_t\bigl(\bar{u}^{i,n-1}_t,Q^*_{t+1},W_{t+1}\bigr)  \bigr] \, \bigl|  \, \mathcal G_{t+1}^{n-1} \Bigr]\nonumber \\
&\ge \frac{\varepsilon \gamma_t}{n} \, \bigl(\bar{u}_t^{i,n-1} - u_t^{i,*}\bigr)^\mathsf{T} \Bigl[\mathbf{E} \bigl[ \psi^i_t\bigl(\bar{u}^{i,n-1}_t,Q^*_{t+1},W_{t+1}\bigr)  \bigr] - \mathbf{E} \bigl[ \psi^i_t\bigl(u^{*}_t,Q^*_{t+1},W_{t+1}\bigr)  \bigr]\Bigr] \label{eq:le1}\\
&\ge \frac{\varepsilon \gamma_t}{n} \, \frac{l_f}{1-\alpha_i} \bigl \| \bar{u}_t^{i,n-1} - u_t^{i,*}  \bigr \|_2^2. \nonumber
\end{align}
Note that $\gamma_t^n(s,a)$ depends on $(S_t^n,a_t^n)$ and $\epsilon_{t+1}^{i,n}(s,a)$ depends on $W_{t+1}^{u,n}$. By independence of $(S_{t}^{n},a_{t}^{n})$ and $W_{t+1}^{u,n}$ and the unbiased property of $\epsilon_{t+1}^{i,n}(s,a)$ of (\ref{eq:unbiased}),
\begin{align}
\mathbf{E} \Bigl[ (\bar{u}_t^{i,n-1} - u_t^{i,*})^\mathsf{T} \diag(\gamma_t^{n})\, \epsilon_{t+1}^{i,n} \, \bigl|  \, \mathcal G_{t+1}^{n-1} \Bigr] =0.\label{eq:le2}
\end{align}
Moving on to the third term, using (\ref{eq:xibound}), the fact that $\gamma_t^n$ has exactly one nonzero component, and the monotonicity of $l_p$ norms, we can deduce
\begin{align*}
\mathbf{E} \Bigl[ - (\bar{u}_t^{i,n-1} - u_t^{i,*})^\mathsf{T} \diag(\gamma_t^{n})\, \xi_{t+1}^{i,n} \, \bigl|  \, \mathcal G_{t+1}^{n-1} \Bigr] 
&\le \frac{\gamma_t}{n} \, \bigl\|\bar{u}_t^{i,n-1} - u_t^{i,*} \bigr\|_\infty \, \frac{L_F}{1-\alpha_i}\,\bigl\|\bar{Q}_{t+1}^{n-1}-Q_{t+1}^*\bigr\|_\infty\nonumber\\
&\le \frac{\gamma_t}{n} \,  \frac{ L_F }{1-\alpha_i} \, \bigl\|\bar{u}_t^{i,n-1} - u_t^{i,*} \bigr\|_2 \, \bigl\|\bar{Q}_{t+1}^{n-1}-Q_{t+1}^*\bigr\|_2.
\end{align*}
Using the inequality $2ab \le a^2 \, \kappa+b^2/\kappa$ for $\kappa >0$ on the above, we arrive at
\begin{equation}
\begin{aligned}
\mathbf{E} \Bigl[ -2\, (\bar{u}_t^{i,n-1} - u_t^{i,*})^\mathsf{T} &\diag(\gamma_t^{n})\, \xi_{t+1}^{i,n} \, \bigl|  \, \mathcal G_{t+1}^{n-1} \Bigr] \\
&\le \frac{\gamma_t \,  L_F^2 \, \kappa}{n \, (1-\alpha_i)^2} \, \bigl\|\bar{u}_t^{i,n-1} - u_t^{i,*} \bigr\|_2^2 + \frac{\gamma_t}{n \, \kappa} \, \bigl\|\bar{Q}_{t+1}^{n-1}-Q_{t+1}^*\bigr\|_2^2.
\end{aligned}
\label{eq:le3}
\end{equation}
Finally, the statement of the lemma follows by taking expectations of the inequalities (\ref{eq:crossterm}), (\ref{eq:le1}), (\ref{eq:le2}), and (\ref{eq:le3}) and combining.
\end{proof}
Similarly, the next lemma relates the error of the approximate value function $\bar{Q}_t^n$ to the error in the last iteration (i.e., of $\bar{Q}_t^{n-1}$), the error of the value function in the next time period (i.e., of $\bar{Q}_{t+1}^{n-1}$), and the error of all of the approximate quantiles $\bar{u}_t^{i,n}$. Because the analysis is similar to that of Lemma \ref{lem:bound1}, we relegate the proof to Appendix \ref{sec:appendix}.
\begin{restatable}{lemma}{boundtwo}
 Under the same conditions as Lemma \ref{lem:bound1}, the sequence of approximations $\bar{Q}_t^n$ generated by Algorithm \ref{alg:dq} satisfies, for any $\kappa_0,\, \kappa_1, \ldots, \kappa_m>0$,
\begin{align*}
\mathbf{E} \Bigl[ \bigl\|\bar{Q}_t^{n} - Q_t^* \bigr \|_2^2   \Bigr]  \le &\biggl[1 - \frac{\eta_t}{n} \Bigl( 2\,\varepsilon - L_\Phi \, \kappa_0 - L_\Phi \sum_{i=1}^m \kappa_i\Bigr)\biggr] \, \mathbf{E} \Bigl[ \bigl\|\bar{Q}_t^{n-1} - Q_t^* \bigr \|_2^2   \Bigr]\\
&+ \frac{\eta_t \, L_\Phi}{n \, \kappa_0} \, \mathbf{E} \Bigl[ \bigl\|\bar{Q}_{t+1}^{n-1} - Q_{t+1}^*  \bigr\|_2^2 \Bigr] + \frac{\eta_t}{n} \,\sum_{i=1}^m \frac{L_\Phi}{\kappa_i} \, \mathbf{E} \Bigl[ \bigl\|\bar{u}_{t}^{i,n-1} - u_{t}^{i,*} \bigr \|_2^2  \Bigr] + C_{H} \, \frac{\eta_t^2}{n^2},
\end{align*}
where $C_H$ bounds the term
\[
\mathbf{E}\Bigl[\bar{Q}_t^{n-1}(s,a) - H_t\bigl(\bar{u}_t^{1,n-1},\ldots,\bar{u}_t^{m,n-1},\bar{Q}_{t+1}^{n-1},W_{t+1}\bigr)(s,a)\Bigr]^2 \le C_{H},
\]
for all $(s,a) \in \mathcal U$.
\label{lem:bound2}
\end{restatable}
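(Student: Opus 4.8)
The plan is to replay the argument of Lemma \ref{lem:bound1}, this time on the $\bar Q$-recursion in Step 6 of Algorithm \ref{alg:dq}. Since $Q_t^* \in \mathcal X_t^q$ by Assumption \ref{ass:algorithm}(vi), the projection $\Pi_{\mathcal X_t^q}$ fixes $Q_t^*$, and its non-expansiveness gives
\[
\bigl\| \bar Q_t^{n} - Q_t^* \bigr\|_2^2 \le \bigl\| \bar Q_t^{n-1} - Q_t^* - \diag(\eta_t^n)\bigl(\bar Q_t^{n-1} - \hat q_t^n\bigr) \bigr\|_2^2 .
\]
Expanding the square produces the base term $\|\bar Q_t^{n-1} - Q_t^*\|_2^2$, a cross term, and a squared-stepsize term $\| \diag(\eta_t^n)(\bar Q_t^{n-1}-\hat q_t^n) \|_2^2$; the latter has only the single visited component nonzero, so its expectation is at most $C_H\,\eta_t^2/n^2$ by the definition of $C_H$, which is the final term in the claim. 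As in Lemma \ref{lem:bound1}, throughout the argument I condition on $\mathcal G_{t+1}^{n-1}$, under which $\bar Q_t^{n-1}$, $\bar Q_{t+1}^{n-1}$, and the $\bar u_t^{i,n-1}$ are treated as known while $(S_t^n,a_t^n)$ and $W_{t+1}^{q,n}$ are not.

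The key preparatory step is to decompose $\hat q_t^n$. Using (\ref{eq:optimalrecursion}) I write $\hat q_t^n(s,a) = Q_t^*(s,a) + \epsilon_{t+1}^{h,n}(s,a) + \xi_{t+1}^{h,n}(s,a)$, where $\epsilon_{t+1}^{h,n}$ is the Monte Carlo error of evaluating $H_t$ at the true arguments $u_t^{1,*},\ldots,u_t^{m,*},Q_{t+1}^*$ against the fresh sample $W_{t+1}^{q,n}$, and $\xi_{t+1}^{h,n}$ collects the bias from replacing $Q_{t+1}^*$ by $\bar Q_{t+1}^{n-1}$ inside the first argument of $H_t$ and each $u_t^{i,*}$ by $\bar u_t^{i,n-1}$. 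Since $W_{t+1}^{q,n}$ is independent of the history and of $(S_t^n,a_t^n)$, we have $\mathbf E[\epsilon_{t+1}^{h,n}(s,a)\mid \mathcal G_{t+1}^{n-1}] = 0$. Since $\Phi$ is $\|\cdot\|_1$-Lipschitz with constant $L_\Phi$ (Assumption \ref{ass:problem}(i)) and $\min_{a'}\bar Q_{t+1}^{n-1}(s,a') - \min_{a'}Q_{t+1}^*(s,a')$ is controlled by $\|\bar Q_{t+1}^{n-1}-Q_{t+1}^*\|_\infty$ via (\ref{eq:minmax}), the $m+1$ bias sources peel off coordinate by coordinate, giving
\[
\bigl| \xi_{t+1}^{h,n}(s,a) \bigr| \le L_\Phi \bigl\| \bar Q_{t+1}^{n-1} - Q_{t+1}^* \bigr\|_2 + L_\Phi \sum_{i=1}^m \bigl\| \bar u_t^{i,n-1} - u_t^{i,*} \bigr\|_2 ,
\]
after bounding $\|\cdot\|_\infty$ by $\|\cdot\|_2$.

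It remains to treat the cross term $-2(\bar Q_t^{n-1}-Q_t^*)^\mathsf{T}\diag(\eta_t^n)(\bar Q_t^{n-1}-\hat q_t^n)$, which splits into three pieces after substituting the decomposition. The ``clean'' piece $-2(\bar Q_t^{n-1}-Q_t^*)^\mathsf{T}\diag(\eta_t^n)(\bar Q_t^{n-1}-Q_t^*)$ is the source of the descent: since each $(s,a)$ is visited with probability at least $\varepsilon$, $\mathbf E[\eta_t^n(s,a)\mid \mathcal G_{t+1}^{n-1}] \ge \varepsilon\eta_t/n$ (exactly as for $\gamma_t^n$ in Lemma \ref{lem:bound1}), so its conditional expectation is at most $-2\varepsilon(\eta_t/n)\|\bar Q_t^{n-1}-Q_t^*\|_2^2$; because the effective per-step objective $\tfrac12(Q_t(s,a)-Q_t^*(s,a))^2$ has curvature $1$, the coefficient is simply $2\varepsilon$, with no density lower bound needed (unlike the $2\varepsilon C_{l_f}$ of Lemma \ref{lem:bound1}). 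The statistical piece $2(\bar Q_t^{n-1}-Q_t^*)^\mathsf{T}\diag(\eta_t^n)\epsilon_{t+1}^{h,n}$ vanishes in conditional expectation exactly as in (\ref{eq:le2}), using independence of $(S_t^n,a_t^n)$ and $W_{t+1}^{q,n}$ together with $\mathbf E[\epsilon_{t+1}^{h,n}(s,a)\mid \mathcal G_{t+1}^{n-1}]=0$. The bias piece is bounded, using that $\diag(\eta_t^n)$ has one nonzero entry $\eta_t/n$, the bound on $|\xi_{t+1}^{h,n}|$ above, and $\|\cdot\|_\infty \le \|\cdot\|_2$, by $2(\eta_t/n)L_\Phi\|\bar Q_t^{n-1}-Q_t^*\|_2\bigl(\|\bar Q_{t+1}^{n-1}-Q_{t+1}^*\|_2 + \sum_{i=1}^m\|\bar u_t^{i,n-1}-u_t^{i,*}\|_2\bigr)$; applying $2ab \le \kappa a^2 + b^2/\kappa$ once with $\kappa_0$ to the $\bar Q_{t+1}$ product and once with $\kappa_i$ to each $\bar u_t^{i}$ product yields exactly the $-L_\Phi(\kappa_0 + \sum_i \kappa_i)$ correction to the leading coefficient, together with the $\frac{\eta_t L_\Phi}{n\kappa_0}\|\bar Q_{t+1}^{n-1}-Q_{t+1}^*\|_2^2$ and $\frac{\eta_t}{n}\sum_i\frac{L_\Phi}{\kappa_i}\|\bar u_t^{i,n-1}-u_t^{i,*}\|_2^2$ terms. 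Taking expectations of these bounds and collecting terms gives the stated inequality.

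The routine parts are the expansion of the square and the repeated use of $2ab \le \kappa a^2 + b^2/\kappa$; the only step requiring care --- and the genuinely new ingredient relative to Lemma \ref{lem:bound1} --- is the noise decomposition of $\hat q_t^n$ and, within it, the coordinate-wise peeling of the $m$ separate quantile-approximation errors out of $H_t$ via the $\|\cdot\|_1$-Lipschitz bound on $\Phi$, so that each $\bar u_t^{i,n-1}$ enters the final bound through its own free parameter $\kappa_i$.
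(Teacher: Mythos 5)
Your proposal is correct and follows essentially the same route as the paper's proof: the same decomposition of $\hat q_t^n$ into an unbiased Monte Carlo error plus a bias term (the paper's $\epsilon_{t+1}^{q,n}$ and $\xi_{t+1}^{q,n}$, yours with opposite sign and the labels $h$ instead of $q$), non-expansiveness of $\Pi_{\mathcal X_t^q}$, the $\varepsilon$-greedy lower bound on $\mathbf E[\eta_t^n(s,a)\mid\mathcal G_{t+1}^{n-1}]$, the Lipschitz bound via (\ref{eq:minmax}), and $2ab\le\kappa a^2+b^2/\kappa$ with the parameters $\kappa_0,\ldots,\kappa_m$. Only cosmetic difference: the paper reserves the superscript $h$ for the risk-directed-sampling noise terms, so your naming would need adjusting, but the argument itself matches.
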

\begin{proof}
See Appendix \ref{sec:appendix}.
\end{proof}

With these preliminary results in mind, we move on to the theorem that states our $\mathcal O(1/n)$ convergence rate and provide a sketch of the proof.


\begin{restatable}[Convergence Rate]{theorem}{thmconvrate}
Choose initial approximations $\bar{Q}^0 \in \mathbb R^D$ and $\bar{u}^{i,0} \in \mathbb R^D$ for each $i \in \{1,2,\ldots,m\}$ so that $\bar{Q}_t^0 \in \mathcal X_t^q$ and $\bar{u}_t^{i,0} \in \mathcal X_t^u$ for all $t \in \mathcal T$. Under Assumptions \ref{ass:algorithm}(vi)--\ref{ass:strongconv}, the $\varepsilon$-greedy sampling policy of (\ref{eq:epsilongreedy}), and the deterministic harmonic stepsizes given in (\ref{eq:harmonic_step}), the sequences of iterates $\bar{u}^{i,n}$ for $i \in \{1,\ldots,m\}$ and $\bar{Q}^n$ generated by Algorithm \ref{alg:dq} satisfy convergence rates of the form $\mathbf{E} \bigl[ \bigl\|\bar{u}^{i,n} - u^{i,*}  \bigr\|_2^2   \bigr] \le  \mathcal O\left(1/n\right)$ for $i \in \{1,\ldots,m\}$ and $\mathbf{E} \bigl[ \bigl\|\bar{Q}^{n} - Q^*  \bigr\|_2^2   \bigr] \le  \mathcal O\left(1/n\right)$.
\label{thm:convrate1}
\end{restatable}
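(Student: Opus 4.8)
The plan is a backward induction on the time index $t \in \{T, T-1, \ldots, 0\}$, with Lemmas \ref{lem:bound1} and \ref{lem:bound2} supplying the one-step recursions and a standard deterministic-recursion lemma converting each recursion into an $\mathcal{O}(1/n)$ bound. The elementary fact I would invoke is: if a nonnegative sequence $\{a_n\}$ satisfies $a_n \le (1 - c/n)\,a_{n-1} + c'/n^2$ for all $n \ge n_0$ with $c > 1$, and $\{a_n\}_{n < n_0}$ is bounded, then $a_n \le K/n$ for a constant $K = K(c, c', n_0, \max_{n < n_0} a_n)$; this follows by checking that the bound $K/n$ is self-propagating, which reduces (up to the base case at $n_0$) to $K \ge c'/(c-1)$.

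The base case $t = T$ is trivial since $\bar{Q}_T^n \equiv Q_T^* = 0$. For the inductive step, assume $\mathbf{E}\bigl[\|\bar{Q}_{t+1}^n - Q_{t+1}^*\|_2^2\bigr] \le K_{t+1}/n$ for all $n$. Substituting this into Lemma \ref{lem:bound1}, the coefficient multiplying $\mathbf{E}\bigl[\|\bar{u}_t^{i,n-1} - u_t^{i,*}\|_2^2\bigr]$ is $1 - \frac{\gamma_t}{n}\bigl(2\varepsilon C_{l_f} - \kappa C_{L_F}\bigr)$: first choose $\kappa$ small enough that $2\varepsilon C_{l_f} - \kappa C_{L_F} > 0$ (possible because $C_{l_f} = l_f/(1-\alpha_i) > 0$ by Assumption \ref{ass:strongconv} and $\varepsilon > 0$), then choose the stepsize constant $\gamma_t$ large enough that $c := \gamma_t\bigl(2\varepsilon C_{l_f} - \kappa C_{L_F}\bigr) > 1$. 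The remaining terms $\frac{\gamma_t}{n\kappa}\,\mathbf{E}\bigl[\|\bar{Q}_{t+1}^{n-1} - Q_{t+1}^*\|_2^2\bigr] + C_{\alpha_i}\frac{\gamma_t^2}{n^2}$ are $\mathcal{O}(1/n^2)$ by the induction hypothesis, so the recursion lemma yields $\mathbf{E}\bigl[\|\bar{u}_t^{i,n} - u_t^{i,*}\|_2^2\bigr] \le K_t^{u,i}/n$ for each $i$. Feeding this (with index shifted to $n-1$) together with the induction hypothesis into Lemma \ref{lem:bound2}, I again pick $\kappa_0, \ldots, \kappa_m$ small so that $2\varepsilon - L_\Phi\kappa_0 - L_\Phi\sum_{i=1}^m \kappa_i > 0$, then $\eta_t$ large so that $\eta_t\bigl(2\varepsilon - L_\Phi\kappa_0 - L_\Phi\sum_i\kappa_i\bigr) > 1$; the remaining terms are again $\mathcal{O}(1/n^2)$, and the recursion lemma gives $\mathbf{E}\bigl[\|\bar{Q}_t^n - Q_t^*\|_2^2\bigr] \le K_t/n$. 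This closes the induction, and summing the finitely many component bounds over $t$ produces the stated rates for $\bar{u}^{i,n}$ and $\bar{Q}^n$.

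The main obstacle --- and the place where the problem structure is essential --- is forcing the contraction constant $c$ to exceed $1$ at every $t$: this is exactly what the strong-convexity lower bound $l_f > 0$ (Assumption \ref{ass:strongconv}) and the exploration floor $\varepsilon$ buy us, at the cost of requiring the stepsize constants $\gamma_t, \eta_t$ to be chosen sufficiently large (a mild, problem-dependent condition typical of $1/n$ stepsizes). A secondary technicality is the small-$n$ regime where $1 - c/n < 0$: here one uses compactness of the projection sets $\mathcal{X}_t^u, \mathcal{X}_t^q$ to bound all iterates uniformly, starts the recursion lemma from $n_0 = \lceil c \rceil + 1$, and absorbs the finitely many early terms into $K$. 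The propagation of the constants $K_t^{u,i}, K_t$ backward through $t$, and their blow-up as the $\kappa$'s shrink, are harmless since $T$ is finite.
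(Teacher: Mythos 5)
Your proposal is correct and follows essentially the same route as the paper's own proof: backward induction on $t$, a Chung-type recursion lemma to convert the one-step inequalities of Lemmas \ref{lem:bound1} and \ref{lem:bound2} into $\mathcal O(1/n)$ bounds, and choices of $\kappa,\kappa_0,\ldots,\kappa_m$ and of the stepsize constants $\gamma_t,\eta_t$ that make the contraction coefficient exceed $1$ (the paper simply fixes explicit values, e.g.\ $\gamma_t = 2/(\varepsilon C_{l_f})$, $\kappa = \varepsilon C_{l_f}/C_{L_F}$, $\eta_t = 2/\varepsilon$, whereas you argue such choices exist). The only cosmetic difference is your explicit handling of the small-$n$ regime via $n_0$ and compactness, which the paper's version of Chung's lemma absorbs through the requirement $k \ge \max\{c/(b-1), a^0\}$.
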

\begin{proof}[Sketch of Proof.]
The proof is also by backwards induction on $t$, where the induction hypothesis at each step is that $\bar{Q}_{t+1}^n$ converges at a rate of $\mathcal O(1/n)$. Applying Lemma \ref{lem:bound1} along with some additional analysis, it is possible to show that $\bar{u}_t^{i,n}$ also converges at a rate of $\mathcal O(1/n)$. Lemma \ref{lem:bound2} then completes the proof by showing the desired rate for $\bar{Q}_{t}^n$. The details are given in Appendix \ref{sec:appendix}.
\end{proof}

Although both $\bar{u}^{i,n}$ and $\bar{Q}^{n}$ converge at a rate of $\mathcal O\left(1/n\right)$, the slower sequence is $\bar{Q}^{n}$. To see why this step is slower, one can compare Lemmas \ref{lem:bound1} and \ref{lem:bound2}. From Lemma \ref{lem:bound1}, we see that the error of the quantity $\bar{u}^{i,n}_t$ depends only on the ``last iteration error'' (the error of $\bar{u}^{i,n-1}_t$) and the error of the next stage value function $\bar{Q}_{t+1}^{n-1}$. In contrast, Lemma \ref{lem:bound2} shows that the error of $\bar{Q}_t^n$ also depends on the error of the current stage quantile $\bar{u}^{i,n-1}$, in addition to its own ``last iteration error'' and the error of the next stage value function. The interpretation of these bounds is that accuracy of $\bar{u}^{i,n-1}$ is needed before we obtain an accurate approximation of $\bar{Q}_t^n$, exactly as our intuition would suggest, given Figure \ref{fig:algidea}. Indeed, this is observed in empirical experiments (see Section \ref{sec:numerical}), motivating the second contribution of this paper, a procedure aimed toward speeding up the slow step of the ADP algorithm.

\section{Efficient Sampling of the ``Risky'' Region}
\label{sec:sampling}
Arguably, the biggest practical issue with a Monte Carlo-based algorithm in the setting of risk-averse decision making is the question of \emph{sampling}. To illustrate, suppose the one-step conditional risk measure is CVaR at a level of $\alpha = 0.99$. Because $\alpha$ is close to 1, the iterates of the Dynamic-QBRM ADP algorithm are volatile and exhibit poor empirical convergence rates, as shown in Figure \ref{fig:emprate}. In this section, we discuss a method to address this issue.


\begin{figure}[h]
        \centering
        \begin{subfigure}[b]{0.4\textwidth}
                \centering
                \includegraphics[width=\textwidth]{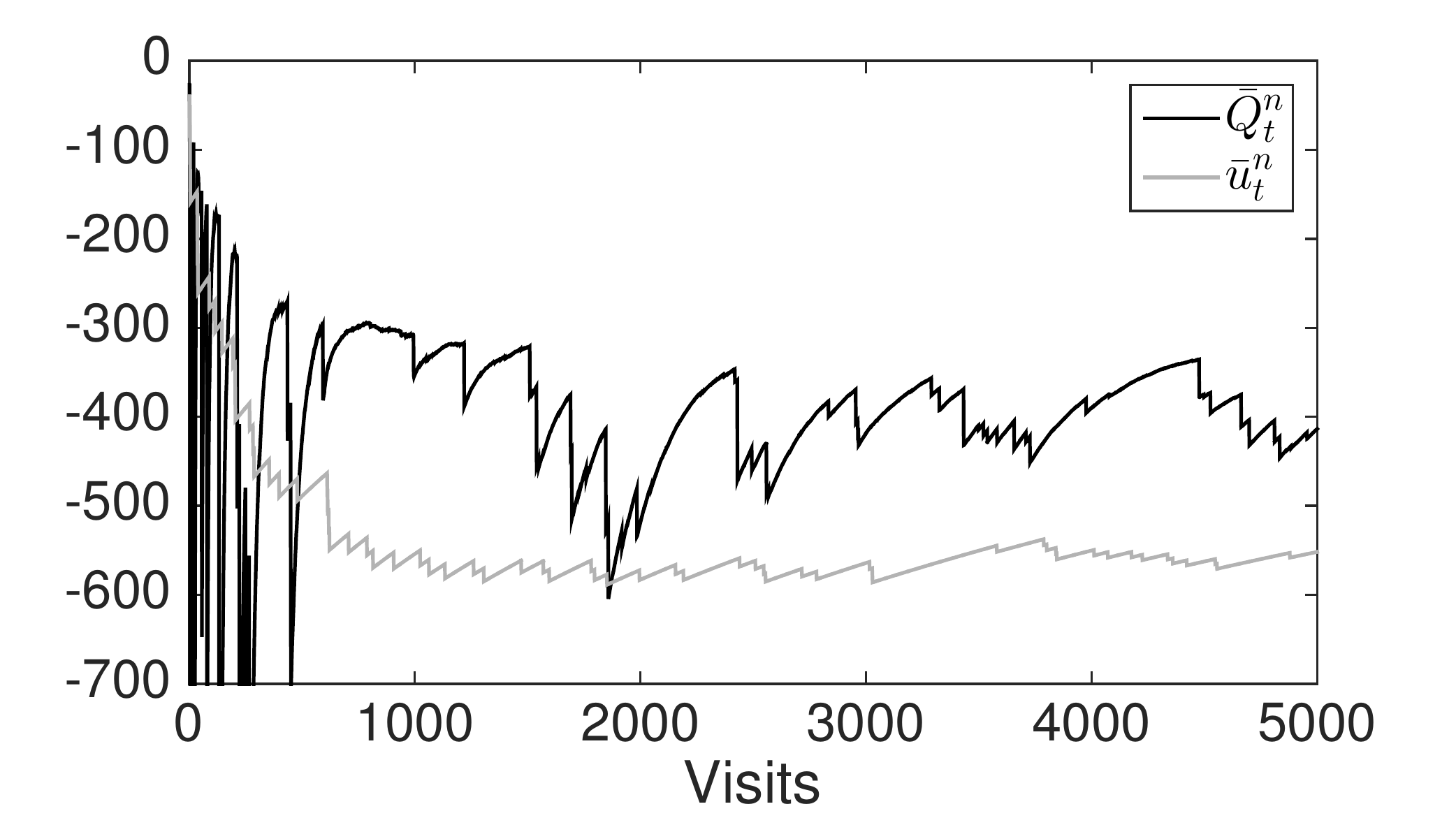}
        \end{subfigure}
        \begin{subfigure}[b]{0.4\textwidth}
                \centering
                \includegraphics[width=\textwidth]{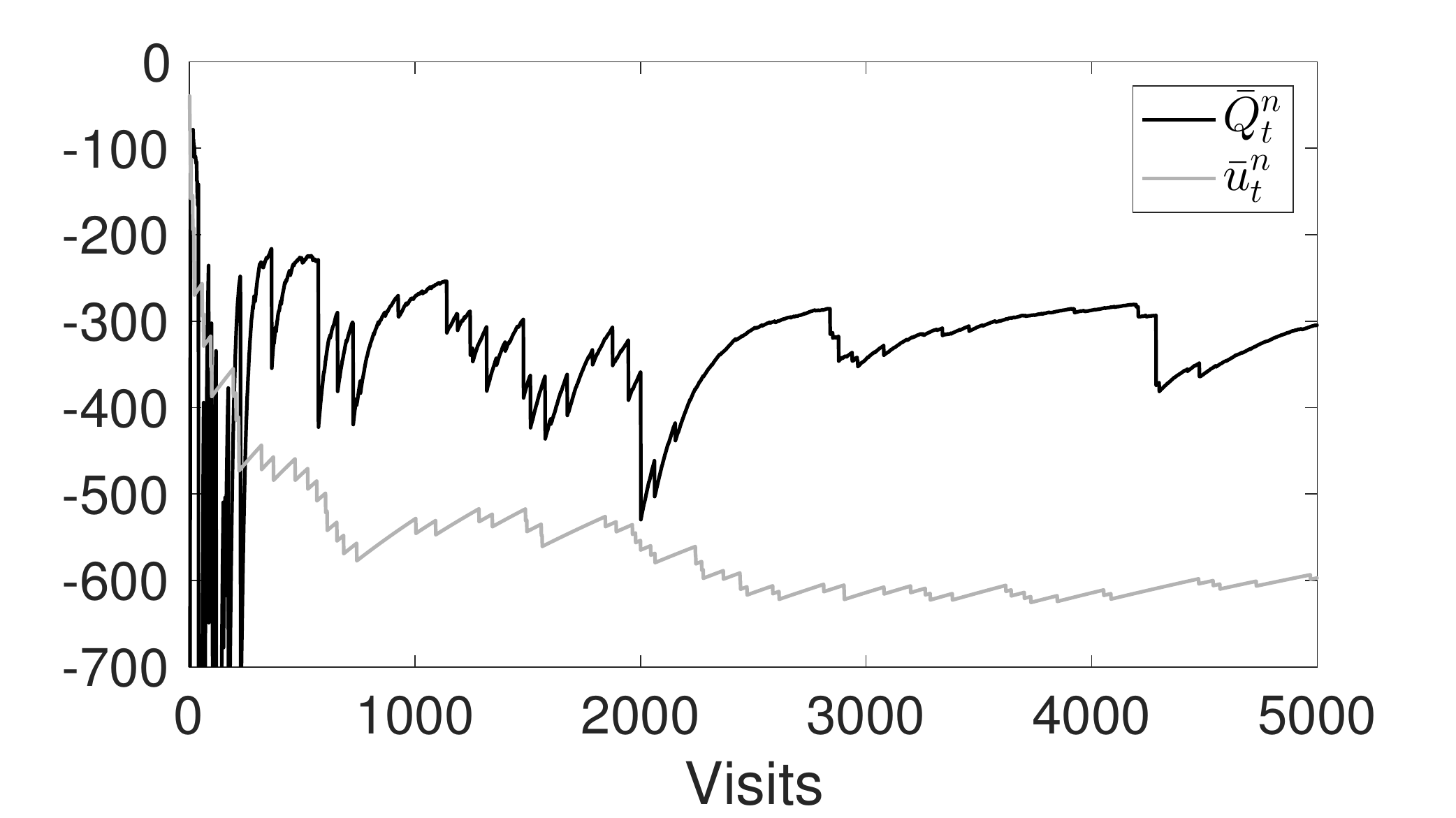}
        \end{subfigure}
        \caption{Sample Paths of Dynamic-QBRM ADP}
        \label{fig:emprate}
\end{figure}


\subsection{Overview of the Main Idea}
As we have mentioned, Dynamic-QBRM ADP can be applied in situations where the distribution of the information process $\{W_{t}\}$ is unknown, a common assumption for ADP algorithms. An example of when such a paradigm can be useful is when one has access to real data, but no stochastic model from which to simulate. Another example is a black-box simulator where the user has little to no control of its parameters. In these scenarios, a good remedy to any sampling issue is to implement an \emph{adaptive stepsize rule}, similar to the likes of \cite{George2006}, \cite{Schaul2013}, and \cite{Ryzhov2015} (the third reference develops a stepsize rule specifically in the context of ADP), that can adjust based on previously observed data points. However, if the distribution of the stochastic process $\{W_{t}\}_{t=0}^T$ is known, then we can propose a new companion procedure to control the sampling process as our ADP algorithm progresses. The procedure takes advantage of the idea of \emph{importance sampling} (see, e.g., \cite{Bucklew2004}) and is inspired by adaptive sampling approaches like the \emph{cross-entropy method} of \cite{Rubinstein1999}. 

\begin{figure}[h]
	\centering
	\includegraphics[width=\textwidth]{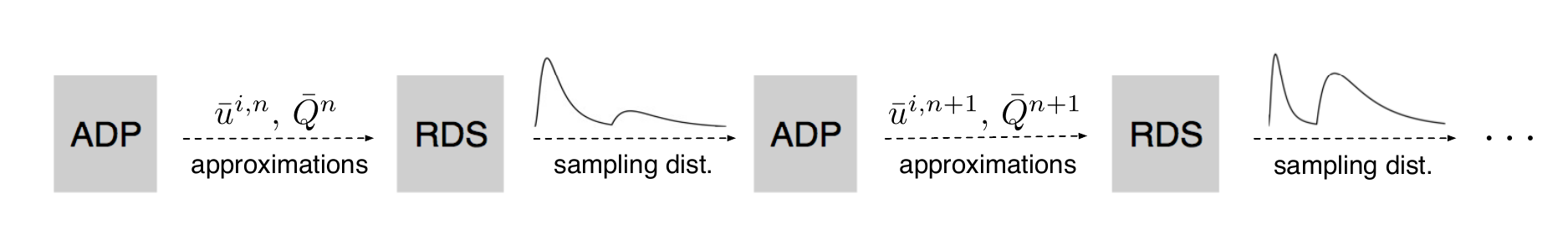}
	\caption{RDS Algorithm Idea}
	\label{fig:rdsalgidea}
\end{figure}

Recall from the results of the previous section that the convergence of $\bar{Q}_{t}^{n}$, the approximation of the value function, is expected to be slower than that of $\bar{u}_t^{i,n}$, the auxiliary variable. 
In this section, we propose a procedure called \emph{risk-directed sampling} (RDS) to improve the sampling efficiency for the step of Dynamic-QBRM ADP where $\bar{Q}_{t}^{n}$ is updated, i.e., Step 6 of Algorithm \ref{alg:dq}. The main idea, as illustrated in Figure \ref{fig:rdsalgidea}, is to use the ADP approximations to drive the learning of the sampling distribution and simultaneously use the sampling distribution to generate the samples for updating the ADP approximations.
As the figure shows, the hope is that our procedure updates the distribution in such a way that we dedicate samples to the regions of high risk (from where we may otherwise not see many samples). 


\subsection{Risk-Directed Sampling} 
Suppose the distribution of the exogenous information $W_{t+1}$ has a density $p_t(w)$. Notice that by (\ref{eq:Qustar}), we have for any $(s,a)$ and $t$,
\begin{equation}
Q^*_{t}(s,a) = \int H_t \bigl(u_t^{1,*}, u_t^{2,*}, \ldots, u_t^{m,*}, Q_{t+1}^{*}, w \bigr)(s,a) \, p_t(w) \, dw.
\label{eq:q_integral}
\end{equation}
For convenience, we use the shorthand notation
\[
H_t^{*}(w\<|\<s,a) = H_t \bigl(u_t^{1,*}, u_t^{2,*}, \ldots, u_t^{m,*}, Q_{t+1}^{*},\, w \, \bigr)(s,a),
\]
to emphasize the variable of integration, $w$. From the principle of importance sampling (see, e.g., \cite{Bucklew2004}), it is known that to produce a low-variance estimate of $Q_t^*(s,a)$ using Monte Carlo sampling, one should sample from a distribution whose density is nearly proportional to the absolute value of the integrand of (\ref{eq:q_integral}).

Our approach takes advantage of this proportionality property of the optimal density and directly constructs an approximation to the absolute value of the integrand of (\ref{eq:q_integral}) by minimizing a \emph{mean squared error}. The idea is to capture the ``risky regions,'' i.e., the parts of the outcome space where the integrand is large. When the parametric class of sampling distributions is chosen to be a mixture class (as we do), this introduces a simplification by allowing us to effectively remove the constraint $\|\theta\|_1 = 1$ from the optimization problem. In addition, we prove that our method converges without exact knowledge of the function $H_t^{*}(w\<|\<s,a)$. Instead, the algorithm works in conjunction with Dynamic-QBRM ADP by using approximations defined by
\[
H_t^{n}(w\<|\<s,a) = H_t \bigl(\bar{u}_t^{1,n-1}, \bar{u}_t^{2,n-1}, \ldots, \bar{u}_t^{m,n-1}, \bar{Q}_{t+1}^{n-1}, w\bigr)(s,a),
\]
where $\bar{u}_t^{1,n-1}, \bar{u}_t^{2,n-1}, \ldots, \bar{u}_t^{m,n-1}, \bar{Q}_{t+1}^{n-1}$ are outputs from Dynamic-QBRM ADP. Like the main ADP method, the procedure is fully adaptive and updates to the sampling distribution are made at every iteration. To our knowledge, an adaptive importance sampling approach for constructing risk-averse policies has not been considered in the literature. However, \cite{Kozmik2014} employs importance sampling from a different perspective: for the \emph{evaluation} of risk-averse policies in stochastic programming.

Let $\{\phi_t^k\}_{k=1}^K$ be the set of densities for $K$ prespecified basis distributions (we also refer to these as \emph{basis functions}) for time $t$, from which we create a mixture distribution used for sampling. Note that it is often sufficient to specify one set of distributions for all $t$, as we do in Section \ref{sec:numerical}.
Our goal is to take the traditional regression approach and develop an approximation of the form $\sum_{k} \bar{\theta}_t^{k,n}(s,a) \, \phi_t^k(w) \approx \bigl|  H_t^{*}(w\<|\<s,a) \bigr |\, p_t(w)$, where $\bar{\theta}_t^{k,n}(s,a)$ is the weight of the $k$-th basis density at iteration $n$ and state $(s,a)$.
For convenience, we also use the notation $\bar{\theta}_t^n(s,a)$ to be a vector in $\mathbb R^K$ with components $\bar{\theta}_t^{k,n}(s,a)$ for $k=1$ to $K$. Choose a (large) compact subset $\widebar{\mathcal W} \subseteq \mathcal W$
and define a random variable $W^u$ that is uniformly distributed over $\widebar{\mathcal W}$, i.e., assume that its density is given by the function $p^u(w) = C_{\widebar{\mathcal W}} \, \mathbf{1}_{\{w \in \widebar{\mathcal W}\}}$ for some constant $C_{\widebar{\mathcal W}}$. In what follows, we measure the mean squared error using this distribution, though of course, other reasonable choices may exist and the algorithm can be adapted accordingly. For a function $F:\mathcal W \rightarrow \mathbb R$ and $\phi = (\phi^1, \phi^2,\ldots,\phi^K)^\mathsf{T}$, define the projection operator 
\begin{equation}
\Pi_{\phi} F = \argmin_{\theta \ge 0} \mathbf{E} \Bigl[ \bigl[ \theta^\mathsf{T} \phi(W^u) - F(W^u)    \bigr]^2 \Bigr],
\label{eq:projection}
\end{equation}
which maps $F$ to a nonnegative coefficient vector $\theta$ corresponding to the best, i.e., minimum mean squared error, approximation under $\phi$.
\begin{proposition}
Under the condition that $ \mathbf{E} \bigl[ \phi(W^u) \, \phi(W^u)^\mathsf{T} \bigr]$ is positive definite, the optimization problem of (\ref{eq:projection}) has a unique solution.
\label{prop:uniquesol}
\end{proposition}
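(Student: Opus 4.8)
The plan is to recognize the problem in (\ref{eq:projection}) as a strictly convex quadratic program over the closed convex cone $\{\theta \in \mathbb{R}^K : \theta \ge 0\}$ and then apply the standard fact that a strictly convex, coercive function attains a unique minimum over a nonempty closed convex set.

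First I would expand the objective. Writing $A = \mathbf{E}\bigl[\phi(W^u)\,\phi(W^u)^\mathsf{T}\bigr] \in \mathbb{R}^{K\times K}$, $b = \mathbf{E}\bigl[F(W^u)\,\phi(W^u)\bigr] \in \mathbb{R}^K$, and $c = \mathbf{E}\bigl[F(W^u)^2\bigr]$ (all finite, since $W^u$ is uniform on the compact set $\widebar{\mathcal W}$ and in our setting $F$ and the $\phi^k$ are bounded there), the objective becomes $g(\theta) = \theta^\mathsf{T} A\,\theta - 2\,b^\mathsf{T}\theta + c$. Its Hessian is $2A$, which is positive definite by hypothesis, so $g$ is strictly convex on all of $\mathbb{R}^K$, and the feasible set $\{\theta \ge 0\}$ is closed and convex.

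Next, coercivity: letting $\lambda > 0$ denote the smallest eigenvalue of $A$, we have $g(\theta) \ge \lambda\,\|\theta\|_2^2 - 2\,\|b\|_2\,\|\theta\|_2 + c \to +\infty$ as $\|\theta\|_2 \to \infty$. Hence the sublevel set $\{\theta \ge 0 : g(\theta) \le g(0)\}$ is bounded; it is also closed (intersection of $\{\theta \ge 0\}$ with a closed sublevel set of the continuous $g$) and nonempty (it contains $0$), hence compact. Therefore $g$ attains its minimum over $\{\theta \ge 0\}$ at some $\theta^* \ge 0$, establishing existence. Uniqueness then follows from strict convexity and convexity of the feasible cone: if $\theta_1 \ne \theta_2$ both minimized $g$ over $\{\theta \ge 0\}$, then $\tfrac12(\theta_1+\theta_2) \ge 0$ would be feasible with $g\bigl(\tfrac12(\theta_1+\theta_2)\bigr) < \tfrac12 g(\theta_1) + \tfrac12 g(\theta_2) = \min g$, a contradiction. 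Thus $\Pi_\phi F$ is well defined.

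There is no substantive obstacle here beyond bookkeeping; the only point needing a moment of care is existence, because the feasible cone is unbounded — this is exactly where positive definiteness of $\mathbf{E}[\phi(W^u)\phi(W^u)^\mathsf{T}]$, rather than mere positive semidefiniteness, is essential (the coercivity estimate would fail along any direction in $\ker A$, and uniqueness could then break down along such directions).
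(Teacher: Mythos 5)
Your argument is correct and is essentially the paper's proof spelled out in full: the paper simply observes that positive definiteness of $\mathbf{E}\bigl[\phi(W^u)\,\phi(W^u)^\mathsf{T}\bigr]$ makes (\ref{eq:projection}) a strictly convex quadratic program and cites a standard reference, whereas you supply the routine details (coercivity for existence on the unbounded cone, strict convexity for uniqueness). No gap; the two proofs coincide in substance.
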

\begin{proof}
If the positive definiteness condition holds, then the problem is a strictly convex quadratic program (see, e.g., \cite{Boyd2004}).
\end{proof}
Exactly in the spirit of importance sampling, we define the optimal sampling density (within the mixture class) to be the probability density function corresponding to the best fit of the integrand using the given basis functions, i.e., $[ \Pi_\phi [\, |H_t^{*}(\,\cdot\,|\<s,a)| \; p_t(\<\cdot\<) ]]^\mathsf{T} \phi(w),$ which is computed by normalizing the weight vector so that it has unit norm.
The sampling density we use at time $t$, iteration $n$, and state $(s,a)$ and the corresponding likelihood ratio are given by
\[
\bar{p}_t^{n-1}(w\<|\<s,a) \propto \bigl[\bar{\theta}_t^{n-1}(s,a)\bigr]^\mathsf{T} \phi(w) \quad \mbox{and} \quad L_{t+1}^n(s,a) = p_t\bigl( W_{t+1}^{q,n} \bigr) \, \bar{p}_t^{n-1}\bigl(W_{t+1}^{q,n} \< \bigl|\<s,a \bigr)^{-1},
\]
where $L_{t+1}^n \in \mathbb R^d$ is defined for each $t$ and $n$ and $W_{t+1}^{q,n} \in \mathcal W$ is the random sample generated by from $\bar{p}_t^{n-1}(w\<|\<s,a)$. Assume that if $\bar{\theta}_t^{n-1}(s,a) = \mathbf{0}$ (the zero vector), then we set $\bar{p}_t^{n-1}(w\<|\<s,a) \propto \sum_k \phi^k(w)$. Finally, to update our coefficients $\bar{\theta}_t^{n}(s,a)$ from one iteration to the next, we need to introduce another $\mathcal G_t^n$-measurable stepsize sequence $\{\beta_t^n\}$ with $\beta_t^n \in \mathbb R^d$ for each $t$. The description of the new algorithm is given in Algorithm \ref{alg:dq_rds} below. 

Besides the additional input parameters, there are three points of departure from Algorithm \ref{alg:dq}. In Step 3, the information process sample $W_{t+1}^{q,n}$ is drawn according to $\bar{p}_t^{n-1}(w\<|\<s,a)$ rather than the true distribution, $p_t(w)$. In Step 5, we include an additional factor corresponding to the likelihood ratio $L_{t+1}^n$ in order to apply importance sampling. Step 7 is new and represents the updating step for the sampling coefficients $\bar{\theta}_t^n(s,a)$ using a stochastic approximation step. Before moving on to the convergence result, we need another stepsize assumption and an assumption on the basis functions. In addition, Lemma \ref{lem:betaxi_sum} verifies a technical condition that we need for the convergence result of Theorem \ref{thm:asconv_rds}.

\IncMargin{1em}
\begin{algorithm}[h]
\small
  \SetKwInput{Input}{Input}
  \SetKwInput{Output}{Output}
  \DontPrintSemicolon
\Indm  
  \Input{Initial estimates of the value function $\bar{Q}^0 \in \mathbb R^D$ and quantiles $\bar{u}^{i,0} \in \mathbb R^D$ for $i \in \{1,2,\ldots,m\}$.}\vspace{3pt}
  \myinput{Basis distributions $\phi^k$ and initial sampling coefficients $\bar{\theta}^{0}(s,a) \in \mathbb R^K$.}\vspace{3pt}
  \myinput{Stepsize rules $\gamma_t^n$, $\eta_t^n$, and $\beta_t^n$ for all $n$, $t$.}
  \BlankLine
  \Output{Approximations $\{\bar{Q}^n\}$ and $\{\bar{u}^{i,n}\}$.}
\Indp
  \BlankLine
  \nl Set $\bar{Q}_T^n = 0$ for all $n$.\;
  \For{$n = 1, 2, \ldots$}{
  \nl  Choose an initial state $(S_0^n, a_0^n)$.\;
  \For{$t = 0, 1, \ldots, T-1$}{
  \nl  Draw a sample of the information process $W_{t+1}^{u,n}$ from the distribution of $W_{t+1}$. Draw an IS sample $W_{t+1}^{q,n}$ so that $W_{t+1}^{q,n}(s,a) \sim \bar{p}_t^{n-1}(w\<|\<s,a)$.
  	\BlankLine
\nl  Update auxiliary variable approximations for $i =1, \ldots m$: \\
    $\quad \quad \quad \quad \bar{u}_t^{i,n} = \Pi_{\mathcal X_t^u} \Bigl \{ \bar{u}_t^{i,n-1} - \textnormal{diag}(\gamma_t^n) \, \psi^i_t \bigl(\bar{u}_t^{i,n-1},\bar{Q}_{t+1}^{n-1}, W_{t+1}^{u,n}\bigr)\Bigr\}.$\;
  \BlankLine

  \nl  Compute an estimate of the future cost based on the current approximations: \\
   $\quad \quad \quad \quad \hat{q}_t^n = \textnormal{diag}(L_{t+1}^n) \, \Bigl[ H_t\bigl(\bar{u}^{1,n-1}_t,\ldots,\bar{u}^{m,n-1}_t, \bar{Q}_{t+1}^{n-1}, W_{t+1}^{q,n}   \bigr) \Bigr]$.\;
  \BlankLine
 \nl  Update approximation of value function: \\
    $\quad \quad \quad \quad \bar{Q}_t^n = \Pi_{\mathcal X_t^q} \Bigl \{\bar{Q}_t^{n-1} - \textnormal{diag}(\eta_t^n) \, \bigl(\bar{Q}_t^{n-1}-\hat{q}_t^n\bigr) \Bigr \}.$\;
  \BlankLine

 \nl Update the sampling coefficients for each state. Let $w = W_{t+1}^{q,n}(s,a)$ and \\
    $\quad \quad \quad \quad \begin{aligned}\bar{\theta}_t^n(s,a) = \Bigl[\bar{\theta}_t^{n-1}(s,a) - &\beta_t^n(s,a)  \, \Bigl[ \bigl(\bar{\theta}_t^{n-1}(s,a)\bigr)^\mathsf{T} \, \phi(w)\\
     &- \bigl|H_t^{n}(w\<|\<s,a)\bigr| \, p_t(w)  \Bigr] \, \phi(w) \, p^u(w) \, \bar{p}_t^{n-1}(w\<|\<s,a)^{-1} \Bigr]^+,\end{aligned}$\;
	where $[\,\cdot\,]^+$ is taken componentwise.
  \BlankLine
 \nl  Choose next state $(S_{t+1}^n, a_{t+1}^n)$.
  }
  }
    \caption{Dynamic-QBRM ADP with Risk-Directed Sampling}
    \label{alg:dq_rds}
\end{algorithm}
\DecMargin{1em}

\begin{assumption} 
For all $(s,a) \in \mathcal U$ and $t \in \mathcal T$, suppose $\beta_t^n$ is $\mathcal G_t^n$-measurable and
\begin{enumerate}[label=(\roman*),labelindent=1in]
\vspace{0.5em}
\item $\beta_t^n(s,a) = \tilde{\beta}_t^{n-1} \, \mathbf{1}_{\{(s,a) = (S_t^n,a_t^n)\}}$, for some $\tilde{\beta}_t^{n-1}\in \mathbb R$ that is $\mathcal G_t^{n-1}$-measurable.
\vspace{0.5em}
\item $\displaystyle \sum_{n=1}^\infty \beta_t^n(s,a) = \infty, \quad \sum_{n=1}^\infty \beta_t^n(s,a)^2 < \infty  \quad a.s.$
\vspace{0.5em}
\item $\displaystyle \mathbf{E} \bigl[(\tilde{\beta}_t^{n-1})^2\bigr] \le \mathcal O(n^{-1-\epsilon})$ for some $\epsilon > 0$.
\end{enumerate}
\label{ass:stepbeta}
\end{assumption}

\begin{assumption}
With regard to the basis distributions $\phi^k$, $k \in \{1,2,\ldots,K\}$, the following hold:
\begin{enumerate}[label=(\roman*),labelindent=1in]
\item $ \mathbf{E} \bigl[ \phi(W^u) \, \phi(W^u)^\mathsf{T} \bigr]$ is a positive definite matrix (so that we may apply Proposition \ref{prop:uniquesol}),
\vspace{0.5em}
\item Let $\hat{q}_t^n \in \mathbb R^d$ be defined as in Step 5 of Algorithm \ref{alg:dq_rds}. There exists a constant $C_\phi$ such that the following holds for any $t$ and $(s,a)$: given $\theta \ge 0$ and $\|\theta\|_1 = 1$, the tails of the distribution $\sum_k \theta^k \phi^k$ are ``heavy'' enough to guarantee that if $W_{t+1}^{q,n}\sim \sum_k \theta^k \phi^k$, then $\mathbf{E}\bigl[\hat{q}_t^n(s,a)^2\bigr] \le C_\phi$.
\end{enumerate}
\label{ass:phisupport}
\end{assumption}

We remark that due to the compactness of the sets $\mathcal X_t^u$ and $\mathcal X_t^q$, many of the terms in the definition of $\hat{q}_t^n$ are bounded; therefore, the crucial terms that affect Assumption \ref{ass:phisupport}(ii) are $L_{t+1}^n$ and the cost function $c_t$. The condition that $ \mathbf{E} \bigl[\beta_t^{n}(s,a)^2\bigr] \le \mathcal O(n^{-1-\epsilon})$ is not particularly difficult to satisfy; for example, our deterministic harmonic stepsizes satisfy the condition with $\epsilon = 1$. In addition, polynomial rules of the form $\beta_t^n(s,a) = n^{-1/2-\epsilon/2} \, \mathbf{1}_{\{(s,a) = (S_t^n,a_t^n) \}}$ work as well.  We state the convergence result for Algorithm $\ref{alg:dq_rds}$ in Theorem $\ref{thm:asconv_rds}$. Since Step 7 of Algorithm \ref{alg:dq_rds} does not project to a compact set, we cannot make use of \cite[Theorem 2.4]{Kushner2003} as we did before. Instead, our convergence result is derived from a theorem of \cite{Pflug1996} for stochastic approximation, which requires the result of Lemma \ref{lem:betaxi_sum}. The rest of the proof is mostly standard and is thus deferred to Appendix \ref{sec:appendix}.

Because the importance sampled stochastic processes are corrected for in expectation using the likelihood ratio $L^n_{t+1}$, we should not expect the rate of convergence to change. Indeed, we have the following theorem.

\begin{restatable}[Convergence Rate]{theorem}{thmconvrate2}
Choose initial approximations $\bar{Q}^0 \in \mathbb R^D$ and $\bar{u}^{i,0} \in \mathbb R^D$ for each $i \in \{1,2,\ldots,m\}$ so that $\bar{Q}_t^0 \in \mathcal X_t^q$ and $\bar{u}_t^{i,0} \in \mathcal X_t^u$ for all $t \in \mathcal T$. Under Assumptions \ref{ass:algorithm}(vi)--\ref{ass:phisupport}, the $\varepsilon$-greedy sampling policy of (\ref{eq:epsilongreedy}), and the deterministic harmonic stepsizes given in (\ref{eq:harmonic_step}), the sequences of iterates $\bar{u}^{i,n}$ for $i \in \{1,\ldots,m\}$ and $\bar{Q}^n$ generated by Algorithm \ref{alg:dq} satisfy convergence rates of the form $\mathbf{E} \bigl[ \bigl\|\bar{u}^{i,n} - u^{i,*}  \bigr\|_2^2   \bigr] \le  \mathcal O\left(1/n\right)$ for $i \in \{1,\ldots,m\}$ and $\mathbf{E} \bigl[ \bigl\|\bar{Q}^{n} - Q^*  \bigr\|_2^2   \bigr] \le  \mathcal O\left(1/n\right)$.
\label{thm:convrate2}
\end{restatable}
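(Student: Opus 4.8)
Read literally, the statement refers to Algorithm \ref{alg:dq}, for which Assumptions \ref{ass:stepbeta}--\ref{ass:phisupport} play no role; in that reading the conclusion is immediate from Theorem \ref{thm:convrate1}, whose hypotheses (Assumptions \ref{ass:algorithm}(vi)--\ref{ass:strongconv}) are a subset of those assumed here. The substantive content, however --- and the reason the sampling-related assumptions are listed --- is the corresponding rate for the risk-directed-sampling variant Algorithm \ref{alg:dq_rds}. I therefore plan to reproduce the backward-induction argument of Theorem \ref{thm:convrate1} by establishing, for Algorithm \ref{alg:dq_rds}, one-step error recursions of exactly the form of Lemmas \ref{lem:bound1} and \ref{lem:bound2}. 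The guiding observation is that the likelihood-ratio correction in Step 5 makes the value-function target \emph{conditionally unbiased} for the true-distribution Bellman operator, so the entire cross-term analysis carries over and only a variance constant changes.

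First I would observe that Step 4 of Algorithm \ref{alg:dq_rds} is identical to Step 4 of Algorithm \ref{alg:dq}: the auxiliary update still draws $W_{t+1}^{u,n}$ from the \emph{true} distribution of $W_{t+1}$ and applies the same stochastic gradient $\psi_t^i$. Hence Lemma \ref{lem:bound1} holds \emph{verbatim} for the sequences $\bar u_t^{i,n}$ produced by Algorithm \ref{alg:dq_rds}, with unchanged constants; no importance sampling enters the quantile step. The only genuinely new object is the update of $\bar Q_t^n$ in Steps 5--6, built from $\hat q_t^n = \textnormal{diag}(L_{t+1}^n)\, H_t(\bar u_t^{1,n-1}, \ldots, \bar u_t^{m,n-1}, \bar Q_{t+1}^{n-1}, W_{t+1}^{q,n})$ with $W_{t+1}^{q,n}(s,a) \sim \bar p_t^{n-1}(\cdot \mid s,a)$.

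The key step is to verify conditional unbiasedness of this estimator. Since $\bar p_t^{n-1}$ is $\mathcal G_t^n$-measurable while $W_{t+1}^{q,n}$ is drawn afterward, integrating the likelihood ratio $L_{t+1}^n(s,a) = p_t(W_{t+1}^{q,n})/\bar p_t^{n-1}(W_{t+1}^{q,n} \mid s,a)$ against $\bar p_t^{n-1}$ gives
\[
\mathbf{E}\bigl[ \hat q_t^n(s,a) \mid \mathcal G_t^n \bigr] = \int H_t(\bar u_t^{1,n-1}, \ldots, \bar u_t^{m,n-1}, \bar Q_{t+1}^{n-1}, w)(s,a)\, p_t(w)\, dw,
\]
which is precisely the conditional mean of the uncorrected stochastic Bellman operator used in the derivation of Lemma \ref{lem:bound2}. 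Consequently the decomposition of the $\bar Q_t^n$ increment into a deterministic-gradient part, a martingale-difference part, and a bias part is structurally unchanged, the contraction coefficient $[1 - (\eta_t/n)(2\varepsilon - L_\Phi \kappa_0 - L_\Phi \sum_{i} \kappa_i)]$ is reproduced exactly, and the only modification is to the squared-increment term: the constant $C_H$ must be replaced by a bound on $\mathbf{E}[\hat q_t^n(s,a)^2]$ for the importance-sampled estimator. Because $\bar p_t^{n-1}$ is a nonnegative mixture $\sum_k \bar\theta_t^{k,n-1}(s,a)\,\phi_t^k$ normalized to unit mass, Assumption \ref{ass:phisupport}(ii) furnishes a constant $C_\phi$ with $\mathbf{E}[\hat q_t^n(s,a)^2] \le C_\phi$ \emph{uniformly} over all admissible coefficient vectors, hence uniformly in $n$; substituting $C_\phi$ for $C_H$ yields an analog of Lemma \ref{lem:bound2} with identical coefficients and a bounded constant term.

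With Lemma \ref{lem:bound1} unchanged and this $C_\phi$-version of Lemma \ref{lem:bound2} in hand, the backward induction on $t$ of Theorem \ref{thm:convrate1} applies without alteration: starting from $\bar Q_T^n = Q_T^* = 0$ and the induction hypothesis that $\bar Q_{t+1}^n$ is $\mathcal O(1/n)$, one selects $\kappa$ small enough that the $\bar u_t^{i,n}$ recursion contracts, extracts an $\mathcal O(1/n)$ rate for $\bar u_t^{i,n}$, and feeds this into the $\bar Q_t^n$ recursion to propagate $\mathcal O(1/n)$ to stage $t$. I expect the main obstacle to be the uniform second-moment control of $\hat q_t^n$: the likelihood ratio $L_{t+1}^n$ can be large wherever the sampling density is light relative to $p_t$, so without the heavy-tail guarantee of Assumption \ref{ass:phisupport}(ii) the constant $C_\phi$ could fail to be finite, or fail to hold uniformly over the random, data-dependent densities $\bar p_t^{n-1}$. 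It is worth noting that this rate argument needs only the uniform bound and \emph{not} convergence of the sampling coefficients $\bar\theta_t^{n-1}$; in particular Assumption \ref{ass:stepbeta} is not invoked here, consistent with the remark preceding the statement that importance sampling, being corrected for in expectation, does not alter the rate.
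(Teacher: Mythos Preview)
Your proposal is correct and follows essentially the same approach as the paper: the paper's proof simply states that the argument is analogous to Theorem \ref{thm:convrate1}, that one needs versions of Lemmas \ref{lem:bound1} and \ref{lem:bound2} for Algorithm \ref{alg:dq_rds}, and that the only new ingredient is the likelihood ratio $L_{t+1}^n(s,a)$, which has expectation one. Your write-up spells out these details more fully (in particular the observation that Lemma \ref{lem:bound1} is literally unchanged and that Assumption \ref{ass:phisupport}(ii) supplies the replacement for $C_H$), and your remark that Assumption \ref{ass:stepbeta} is not actually needed for the rate is a nice sharpening of the hypotheses.
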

\begin{proof}
The proof is analogous to that of Theorem \ref{thm:convrate1}. We need versions of Lemma \ref{lem:bound1} and Lemma \ref{lem:bound2} for Algorithm \ref{alg:dq_rds}. The difference is that we need to deal with the term $L_{t+1}^n(s,a)$, which has expectation equal to 1.
\end{proof}

\begin{restatable}[Convergence of Risk-Directed Sampling Procedure]{theorem}{thmrdsconv}
Choose initial approximations $\bar{Q}^0 \in \mathbb R^D$ and $\bar{u}^{i,0} \in \mathbb R^D$ for each $i \in \{1,2,\ldots,m\}$ so that $\bar{Q}_t^0 \in \mathcal X_t^q$ and $\bar{u}_t^{i,0} \in \mathcal X_t^u$ for all $t \in \mathcal T$. Under Assumptions \ref{ass:algorithm}(vi)--\ref{ass:phisupport}, the $\varepsilon$-greedy sampling policy of (\ref{eq:epsilongreedy}), and the deterministic harmonic stepsizes given in (\ref{eq:harmonic_step}), Algorithm \ref{alg:dq_rds} generates a sequence of iterates $\bar{Q}^n$ that converge almost surely to the optimal value function $Q^*$. Moreover, the sampling coefficients $\bar{\theta}_t^n(s,a)$ converge to the optimal sampling coefficients under $\phi$:
\[
\bar{\theta}_t^n(s,a) \longrightarrow \Pi_\phi \Bigl[\,\bigl|H_t^{*}(\< \cdot \<|\<s,a) \bigr| \; p_t(\< \cdot \<)\, \Bigr] \quad a.s.
\]
for each $t$ and $(s,a) \in \mathcal U$.
\label{thm:asconv_rds}
\end{restatable}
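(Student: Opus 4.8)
The plan is to establish the two assertions in order. For the first (almost sure convergence of $\bar{Q}^n$, and along the way $\bar{u}^{i,n}\to u^{i,*}$), the key observation is that the only modification to Steps~4--6 relative to Algorithm~\ref{alg:dq} is the insertion of the likelihood ratio $L_{t+1}^n$ in $\hat{q}_t^n$. Since $W_{t+1}^{q,n}(s,a)\sim\bar{p}_t^{n-1}(\cdot\<|\<s,a)$ and $L_{t+1}^n(s,a)=p_t(W_{t+1}^{q,n})\,\bar{p}_t^{n-1}(W_{t+1}^{q,n}\<|\<s,a)^{-1}$, a change-of-measure computation shows $\mathbf{E}\bigl[\hat{q}_t^n(s,a)\,\big|\,\mathcal G_{t+1}^{n-1}\bigr]=\mathbf{E}\bigl[H_t(\bar{u}^{1,n-1}_t,\dots,\bar{u}^{m,n-1}_t,\bar{Q}^{n-1}_{t+1},W_{t+1})(s,a)\bigr]$, i.e., exactly the conditional mean used in the proofs of Lemma~\ref{lem:uconv} and Theorem~\ref{thm:asconv}. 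Hence the decomposition of $\hat{q}_t^n$ into its conditional mean, a martingale-difference statistical error $\epsilon_{t+1}^{h,n}$, and an approximation-error term $\xi_{t+1}^{h,n}$ that vanishes as $\bar{Q}_{t+1}^{n-1}\to Q_{t+1}^*$ is unchanged, while Assumption~\ref{ass:phisupport}(ii) supplies the uniform second-moment bound $\mathbf{E}[\hat{q}_t^n(s,a)^2]\le C_\phi$ that replaces the cruder bound available in the non-importance-sampled case. The backward induction on $t$ of Theorem~\ref{thm:asconv} --- at each level applying Lemma~\ref{lem:uconv} and then \cite[Theorem~2.4]{Kushner2003} --- therefore carries over verbatim and yields $\bar{u}_t^{i,n}\to u_t^{i,*}$ and $\bar{Q}_t^n\to Q_t^*$ almost surely for all $t$.

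For the second assertion, fix $t$ and $(s,a)$, abbreviate $\theta^n=\bar{\theta}_t^n(s,a)$ and $w^n=W_{t+1}^{q,n}(s,a)$, and set $g_n(w)=|H_t^n(w\<|\<s,a)|\,p_t(w)$ and $g_*(w)=|H_t^*(w\<|\<s,a)|\,p_t(w)$. Step~7 is the recursion $\theta^n=\bigl[\theta^{n-1}-\beta_t^n(s,a)\,Y^n\bigr]^+$ with $Y^n=\bigl[(\theta^{n-1})^\mathsf{T}\phi(w^n)-g_n(w^n)\bigr]\phi(w^n)\,p^u(w^n)\,\bar{p}_t^{n-1}(w^n\<|\<s,a)^{-1}$. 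Because $w^n$ is drawn from $\bar{p}_t^{n-1}(\cdot\<|\<s,a)$, the factor $p^u/\bar{p}_t^{n-1}$ is an importance weight and $\mathbf{E}[Y^n\,|\,\mathcal G_{t+1}^{n-1}]=\mathbf{E}\bigl[\bigl((\theta^{n-1})^\mathsf{T}\phi(W^u)-g_n(W^u)\bigr)\phi(W^u)\bigr]=\tfrac12\nabla J_n(\theta^{n-1})$, where $J_n(\theta)=\mathbf{E}\bigl[(\theta^\mathsf{T}\phi(W^u)-g_n(W^u))^2\bigr]$. I would then write $Y^n=\tfrac12\nabla J_*(\theta^{n-1})+\xi_{t+1}^{h,n}+\epsilon_{t+1}^{h,n}$, where $\xi_{t+1}^{h,n}=\tfrac12\nabla J_n(\theta^{n-1})-\tfrac12\nabla J_*(\theta^{n-1})$ is the bias from using the ADP approximations in place of the true values and $\epsilon_{t+1}^{h,n}=Y^n-\mathbf{E}[Y^n\,|\,\mathcal G_{t+1}^{n-1}]$ is a martingale difference. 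By Assumption~\ref{ass:phisupport}(i), $J_*$ is a strongly convex quadratic, so it has the unique constrained minimizer $\theta^*:=\Pi_\phi\bigl[\,|H_t^*(\<\cdot\<|\<s,a)|\;p_t(\<\cdot\<)\,\bigr]$ over $\theta\ge0$, and $J_*(\cdot)-J_*(\theta^*)$ (equivalently $\|\cdot-\theta^*\|_2^2$) is a Lyapunov function for which $-\tfrac12\nabla J_*$ is a pseudogradient pointing at $\theta^*$, the fixed point of the projected recursion.

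It then remains to verify the hypotheses of the stochastic approximation theorem of \cite{Pflug1996}. The stepsize conditions $\sum_n\beta_t^n(s,a)=\infty$ and $\sum_n\beta_t^n(s,a)^2<\infty$ come from Assumption~\ref{ass:stepbeta}(i)--(ii) together with the fact that the $\varepsilon$-greedy policy visits $(s,a)$ infinitely often (Assumption~\ref{ass:algorithm}(v)). The bias vanishes almost surely: Lipschitz continuity of $\Phi$ (Assumption~\ref{ass:problem}(i)) and the first assertion give $\sup_{w\in\widebar{\mathcal W}}|g_n(w)-g_*(w)|\to0$, hence $\|\nabla J_n(\theta)-\nabla J_*(\theta)\|_2\to0$ locally uniformly in $\theta$, and Lemma~\ref{lem:betaxi_sum} supplies the precise summability statement on $\sum_n\beta_t^n(s,a)\,\xi_{t+1}^{h,n}$ that Pflug's theorem requires so that the bias is a negligible perturbation. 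Finally, $\mathbf{E}[\epsilon_{t+1}^{h,n}(s,a)\,|\,\mathcal G_{t+1}^{n-1}]=0$, and its second moment --- which contains the inverse sampling density $\bar{p}_t^{n-1}(w^n\<|\<s,a)^{-1}$ --- is controlled, after weighting by $\mathbf{E}[(\tilde{\beta}_t^{n-1})^2]\le\mathcal O(n^{-1-\epsilon})$ from Assumption~\ref{ass:stepbeta}(iii), using the convention $\bar{p}_t^{n-1}\propto\sum_k\phi^k$ when $\bar{\theta}_t^{n-1}=\mathbf 0$ and the heavy-tail hypothesis of Assumption~\ref{ass:phisupport}(ii). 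Pflug's theorem then gives $\theta^n\to\theta^*$ a.s.; since $t$ and $(s,a)$ were arbitrary, the asserted convergence of the sampling coefficients follows.

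\textbf{Main obstacle.} The delicate point is that Step~7 carries no projection onto a compact set, so the route used for Algorithm~\ref{alg:dq} (\cite[Theorem~2.4]{Kushner2003}) is unavailable and one must instead control the stability of the unbounded iterates $\bar{\theta}_t^n(s,a)$ directly; this is the reason for invoking \cite{Pflug1996}, and the real work lies in verifying its noise and stepsize hypotheses --- in particular in bounding the second moment of the update direction, which can blow up when $\bar{\theta}_t^{n-1}$ has small norm unless the $\bar{\theta}_t^{n-1}=\mathbf 0$ fallback and the heavy-tailed basis assumption are exploited --- rather than in the convex-analytic skeleton, which is routine once Assumption~\ref{ass:phisupport}(i) is in force.
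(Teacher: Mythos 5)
Your proposal is correct and follows essentially the same route as the paper: the likelihood ratio plus Assumption \ref{ass:phisupport}(ii) preserves the conditional-mean and second-moment conditions so the backward induction of Theorem \ref{thm:asconv} goes through, and the coefficient convergence is obtained via a Pflug-style stochastic approximation argument whose bias term is controlled by Lemma \ref{lem:betaxi_sum} and whose curvature comes from Assumption \ref{ass:phisupport}(i), exactly as in the paper. The only cosmetic differences are that you package the bias as the difference of conditional gradients $\tfrac12\nabla J_n-\tfrac12\nabla J_*$ rather than the paper's pathwise $\xi_{t+1}^{h,n}$ (the two coincide in conditional expectation), and you cite Pflug's theorem as a black box where the paper carries out the Robbins--Siegmund supermartingale recursion for $\|\bar{\theta}_t^n(s,a)-\theta_t^*(s,a)\|_2^2$ explicitly because the nonnegative-orthant truncation and unbounded iterates require a few adapted estimates.
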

\begin{proof}
See Appendix \ref{sec:appendix}.
\end{proof}

Not surprisingly, the new algorithm retains the same theoretical properties of the standard Dynamic-QBRM ADP without RDS. In addition, we now have a sampling density that converges to the optimal sampling density as the algorithm progresses (Theorem \ref{thm:asconv_rds}). What remains for us to explore are the empirical convergence rates of the two approaches.

\section{Numerical Results}
\label{sec:numerical}
The recent surge of interest in energy and sustainability has shown that when pertaining to the question of risk, the problem of optimal control of energy storage assets is an especially rich application domain. In this section, we illustrate our proposed ADP algorithm by way of a stylized energy trading and bidding problem in which both heavy tails and extreme events play a prominent role.
\subsection{Model}
We consider the problem of using energy storage to trade in the electricity market, i.e., \emph{energy arbitrage}, with the caveat that there is the possibility of some financial penalty when the amount of stored energy is low.  
For example, since storage for backup purposes is rarely in use, one might consider using it to generate a stream of revenue by interacting with the market. However, this immediately introduces a source of risk: in the rare event when backup is needed, it is crucial that there is enough energy to cover demand. Failure to do can cause complications, so it is useful to consider a risk-averse policy to this problem. See \cite{Xi2014} for a detailed model of the shared storage situation, but solved with a risk-neutral objective.
When no penalty is assessed, we assume that there is a modest reward. Finally, we introduce a bidding aspect to the problem where one must place bids prior to the desired transaction time (see \cite{Jiang2013a} for detailed model of hour-ahead bidding).

For $t=0$ to $t=T$, let $S_t \in \mathcal S = \{0,1,\ldots,S_\textnormal{max}\}$ be the amount of energy in storage and let $P_t \sim \log \mathcal N(\mu_P(t), \sigma_P(t)^2)$ be the (heavy-tailed) spot price of electricity. Also, let $U_t \sim \mathcal N(0,\sigma_U^2)$ be independent of $P_t$, and suppose our simple model of storage-based penalties is as follows. Given two constants $0<a < b$, where $a$ represents the ``rate of reward'' and $b$ represents the ``rate of penalty,'' assume that the reward/penalty assessed at time $t+1$ is given by
\[
F_{t+1} = \bigr| \mu_S(S_t) + U_{t+1} \bigl| \, \Bigl[ b \cdot \mathbf{1}_{\{\mu_S(S_t)+U_{t+1} < 0\}} - a \cdot \mathbf{1}_{\{\mu_S(S_t)+U_{t+1} \ge 0\}}  \Bigr],
\]
where $\mu_S : \mathcal S \rightarrow \mathbb R$ is a nondecreasing function, signifying that as $S_t$ increases, the possibility of penalty decreases (rare event). Our two-dimensional action at each time $t$ is given by 
\[
a_t = (b_t^-, b_t^+) \in \mathcal A  \subseteq \bigl\{(b^-, b^+): 0 \le b^- \le b^+ \le b_\textnormal{max} \bigr\},
\] where $|\mathcal A| < \infty$. There are no constraints, so $\mathcal A_s = \mathcal A$ for all $s \in \mathcal S$. We call $b_t^-$ the \emph{buy bid} and $b_t^+$ the \emph{sell bid}: if $P_t$ fluctuates below the buy bid, we are obligated to buy from the market and if $P_t$ rises above the sell bid, we are obligated to sell to the market. In addition, we are penalized the amount of the spot price if we are to sell but the storage device is empty, i.e., $S_t=0$. In this problem, our information process $W_t$ is given by the pair $(P_t,U_t)$ and is independent of the past. We find it most natural to model this problem in the sense of maximizing revenues (or ``contributions'') rather than minimizing costs; hence, the \emph{contribution function} is
\[
c_t(S_t,a_t,W_{t+1}) = -F_{t+1} + P_{t+1} \,\Bigl[  \mathbf{1}_{\{ b_t^+ < P_{t+1} \}} - \mathbf{1}_{\{ b_t^- > P_{t+1} \}} - \mathbf{1}_{\{S_t=0\}}\,\mathbf{1}_{\{ b_t^+ < P_{t+1} \}} \Bigr],
\]
and the transition function is given by $S_{t+1} = [ \min \{ S_t + \mathbf{1}_{\{ b_t^- > P_{t+1} \}} - \mathbf{1}_{\{ b_t^+ < P_{t+1} \}}, \, S_\textnormal{max}  \} ]^+$.
The objective is to optimize the risk-averse model given in (\ref{eq:rmdp}) with the ``$\min$'' operator replaced by a ``$\max$'' and taking the QBRM to be mean-CVaR at $\alpha = 0.99$ (this refers to the lower 0.01 tail of the distribution since we are now in the setting of rewards, not costs). Various values of $\lambda$ are considered.
\subsection{Parameter Choices}
We let $T=12$ and the size of the storage device be $S_\textnormal{max} = 6$. Next, suppose the values of $\mu_S$ are chosen so that $\mathbf{P}\bigl(\mu_S(S_t)+U_{t+1} < 0\bigr)$ are 0.1, 0.05, 0.02, 0.01, 0.01, 0.001, and 0.001 for $S_t = 0, 1,\ldots,S_\textnormal{max}$ (representing increasingly rare penalties) with $\sigma_U^2=1$. The penalty parameters are set to be $a=5$ and $b=500$, making the penalty events relatively severe. We denote the (seasonal) mean of the spot price process by $m(t) = \mathbf{E}(P_t) = 50\,\sin(4\pi t/T)+100$, with a constant variance $v=\textnormal{Var}(P_t) = 3000$. By the properties of the lognormal distribution, this leads to the parameters $\mu_P(t) = \log ( m(t) \, (1+v/m(t)^2)^{-1/2}  )$ and $\sigma_P(t) = \sqrt{\log (1+v/m(t)^2)}$.
The maximum bid is chosen to be $b_\textnormal{max} = 500$ and each dimension (the range from $0$ to $500$) is discretized into increments of 50, resulting in an MDP with approximately 7{,}000 states.

Finally, we discuss our choice of parameters for the risk-directed sampling (RDS) procedure. For $\phi^1$, we simply take the true distribution of $W_{t+1}$ and for the remaining $\phi^k$, we select a set of bivariate normal distributions placed in a grid, with mean of the first component taking values in $\{50,175,300\}$ and the mean of the second component taking values in $\{-3,-1,1\}$. The standard deviations are chosen to be 750 and 0.25 for the respective components. This is a fairly general choice of basis functions that uses very little problem specific information and/or tuning. Figure \ref{subfig:basis500} shows an example of the shape of the sampling density after 500 iterations of RDS for a fixed state of $(S_t,b_t^-, b_t^+) = (0,150,300)$ at $t=T-1$ with $\lambda = 0.5$ (note: the $z$-axis is not normalized here and the range of the $P_t$ axis has been decreased to focus on the nonzero areas). As we would expect, the algorithm has chosen to allocate a relatively large sampling effort toward small values of $U_t$, presumably due to the fact that these events are very costly with $b=500$ and thus critical to the estimation of CVaR. The last few observations are shown as red points.
\begin{figure}[h]
        \centering
        \begin{subfigure}[b]{0.45\textwidth}
                \centering
                \includegraphics[width=\textwidth]{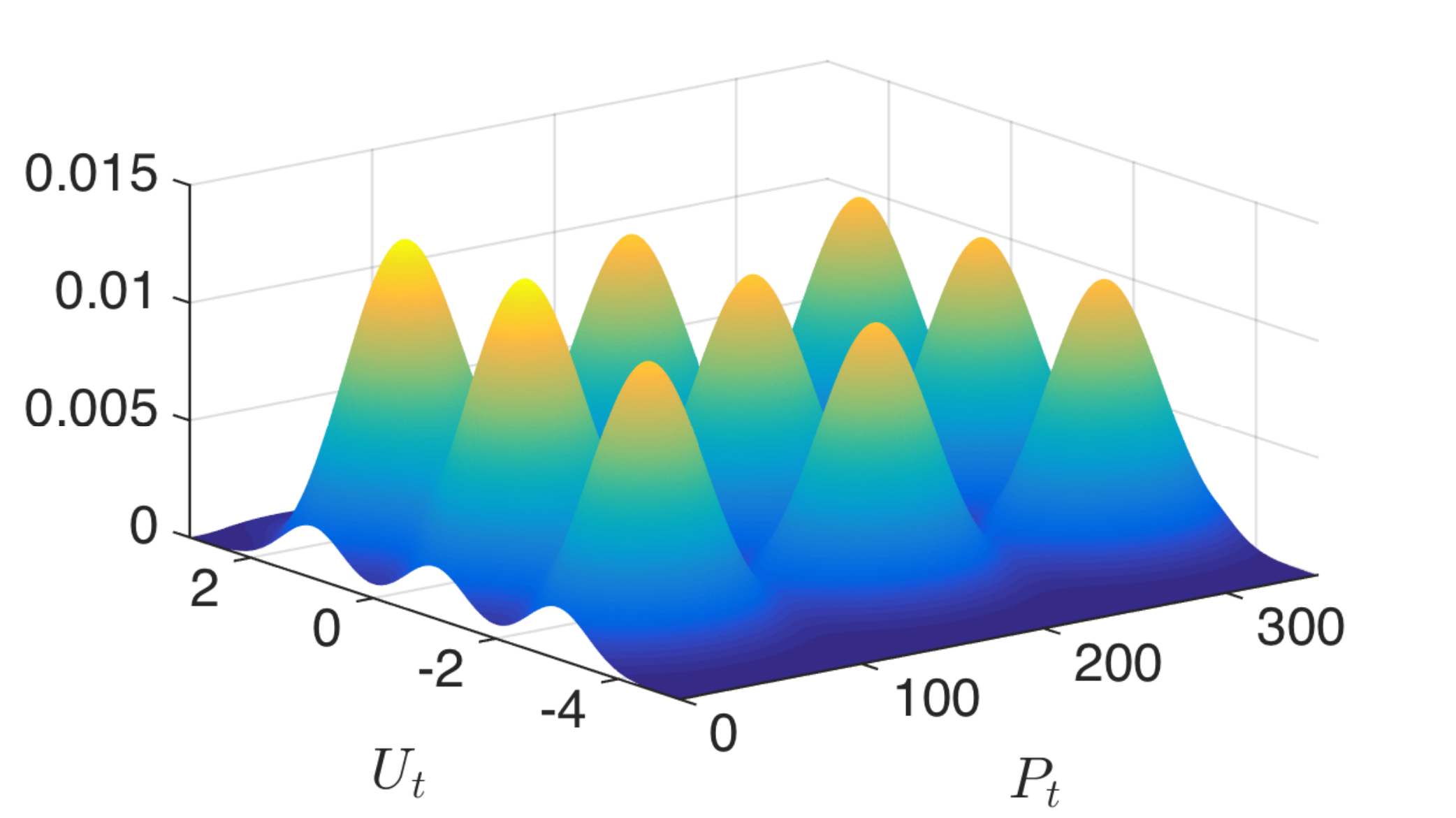}
                \caption{Basis distributions $\phi^k$ (equally weighted)}
        \label{subfig:basis}
        \end{subfigure}
        \begin{subfigure}[b]{0.45\textwidth}
                \centering
                \includegraphics[width=\textwidth]{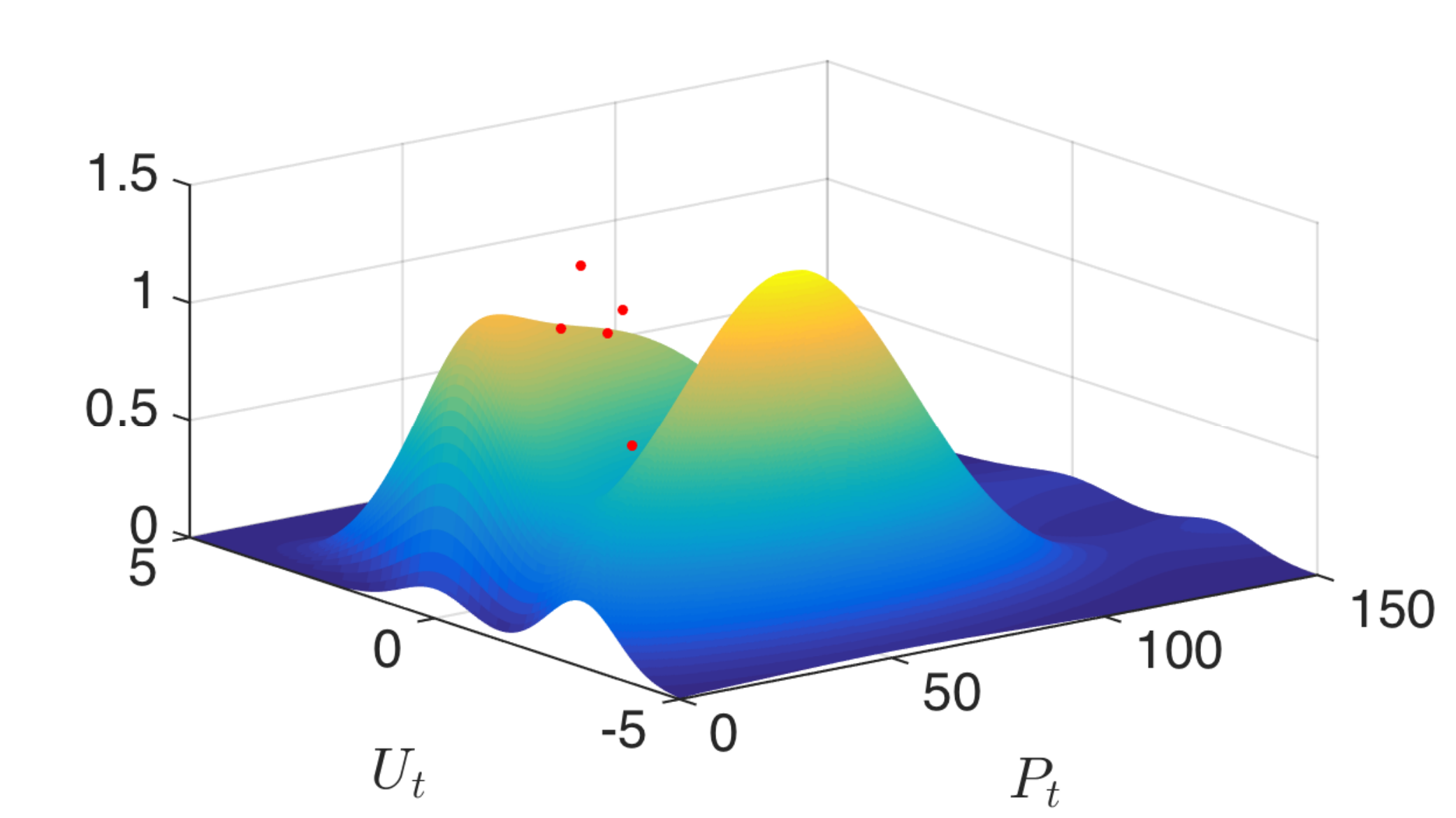}
                \caption{Shape of sampling density, $n=500$}
                \label{subfig:basis500}
        \end{subfigure}
        \caption{Example Illustration of Risk-Directed Sampling ($\lambda = 0.5$)}
        \label{fig:rds_illustration}
\end{figure}

\subsection{Evaluating Policies}
Here, we describe the procedure for evaluating policies under the objective (\ref{eq:rmdp}). Let $\{A_0^\pi, A_1^\pi, \ldots, A_{T-1}^\pi\}$ be a policy; to compute (\ref{eq:rmdp}), let $S_{t+1}^\pi = S^M(s,A_t^\pi(s), W_{t+1})$ and we solve the recursive equations given by
\begin{align}
V_t^\pi(s) &= \rho^\alpha_t \bigl(c_t(s, A_t^\pi(s), W_{t+1}) + V_{t+1}^\pi(S^\pi_{t+1}) \bigr) \textnormal{ for all } s \in \mathcal S, \; t \in \mathcal T,\label{eq:pibellman}\\
V_T^\pi(s) &= 0 \mbox{ for all } s \in \mathcal S.\nonumber
\end{align}
The value of the policy is then given by $V_0^\pi(S_0)$, but since $W_{t}$ is continuously distributed, we cannot, in general, solve these equations exactly. For this reason, we select a large, finite sample $\hat{\Omega} \subseteq \Omega$, and apply the standard \emph{sample average approximation} (SAA) technique of \cite{Kleywegt2002} to the Bellman recursion (\ref{eq:pibellman}) along with the standard linear programming method suggested in \cite{Rockafellar2000} for computing conditional value at risk. Next, in order to have an optimality benchmark against which ADP policies can be compared, we also apply the SAA technique to the Bellman recursion for the \emph{optimal} policy specified in Theorem \ref{thm:bellman} (with $\rho_t = \rho_t^\alpha$ and the ``$\min$'' replaced by ``$\max$'').


We use $50{,}000$ realizations of $W_t=(P_t,U_t)$ for each $t=1,2,\ldots,T$ in our simulations below. For the purposes of our numerical work and Figure \ref{fig:RDSconvergence}, we refer to the SAA optimal policy (which has value function $V_{0,\beta}^*(S_0)$) as our benchmark for ``100\% optimal'' and a corresponding SAA \emph{myopic policy}, denoted by $\pi_\text{m}$, (i.e., the policy which takes the action to maximize $\rho_t^\alpha(c_t(s,a,W_{t+1}))$ with zero continuation value to be ``0\% optimal.'' More precisely, we have
\[
\text{\% optimality of policy $\pi$ (with respect to myopic policy $\pi_\text{m}$)} = \frac{V_{0}^\pi(S_0) - V_{0}^{\pi_\text{m}}(S_0)}{ V_{0}^*(S_0) - V_{0}^{\pi_\text{m}}(S_0)}.
\]
Recall that larger $V^\pi_0(S_0)$ is better, as we have switched to the maximization setting for the numerical application.

\subsection{Results}
Let us first show a few plots that illustrate the effectiveness of the RDS procedure. Once again, we fix the state to be $(S_t,b^-_t,b_t^+) = (0,150,300)$ at $t=T-1$ and set $\lambda = 0.5$, but now we focus on the evolution of the approximations $\bar{Q}_t^n$ and $\bar{u}_t^n$; representative sample paths are shown in Figure \ref{fig:rds_illustration2} (the first plot is from Figure \ref{fig:emprate}, repeated for comparison purposes).

To make the comparison as fair as possible, the random number generator seed is set so that the sequence of observations used to update $\bar{u}_t^n$ is the same in both cases (i.e., the gray lines are identical). The true limit points are approximately $u^*_t(S_t,a_t) \approx -555$ and $Q_t^*(S_t,a_t) \approx -387$, and we notice that in the case of Figure \ref{subfig:yes_is}, a decent approximation of $Q_t^*$ is obtained around visit 2500, while the approximation in Figure \ref{subfig:no_is} does not settle until the end. This drastic difference in empirical convergence rate can have a significant impact on the performance of Dynamic-QBRM ADP as the large errors shown in Figure \ref{subfig:no_is} are propagated backwards in time.

\begin{figure}[h]
        \centering
        \begin{subfigure}[b]{0.43\textwidth}
                \centering
                \includegraphics[width=\textwidth]{FIGS/no_is_line_1_4_7-eps-converted-to.pdf}
                \caption{Without RDS}
        \label{subfig:no_is}
        \end{subfigure}
        \begin{subfigure}[b]{0.43\textwidth}
                \centering
                \includegraphics[width=\textwidth]{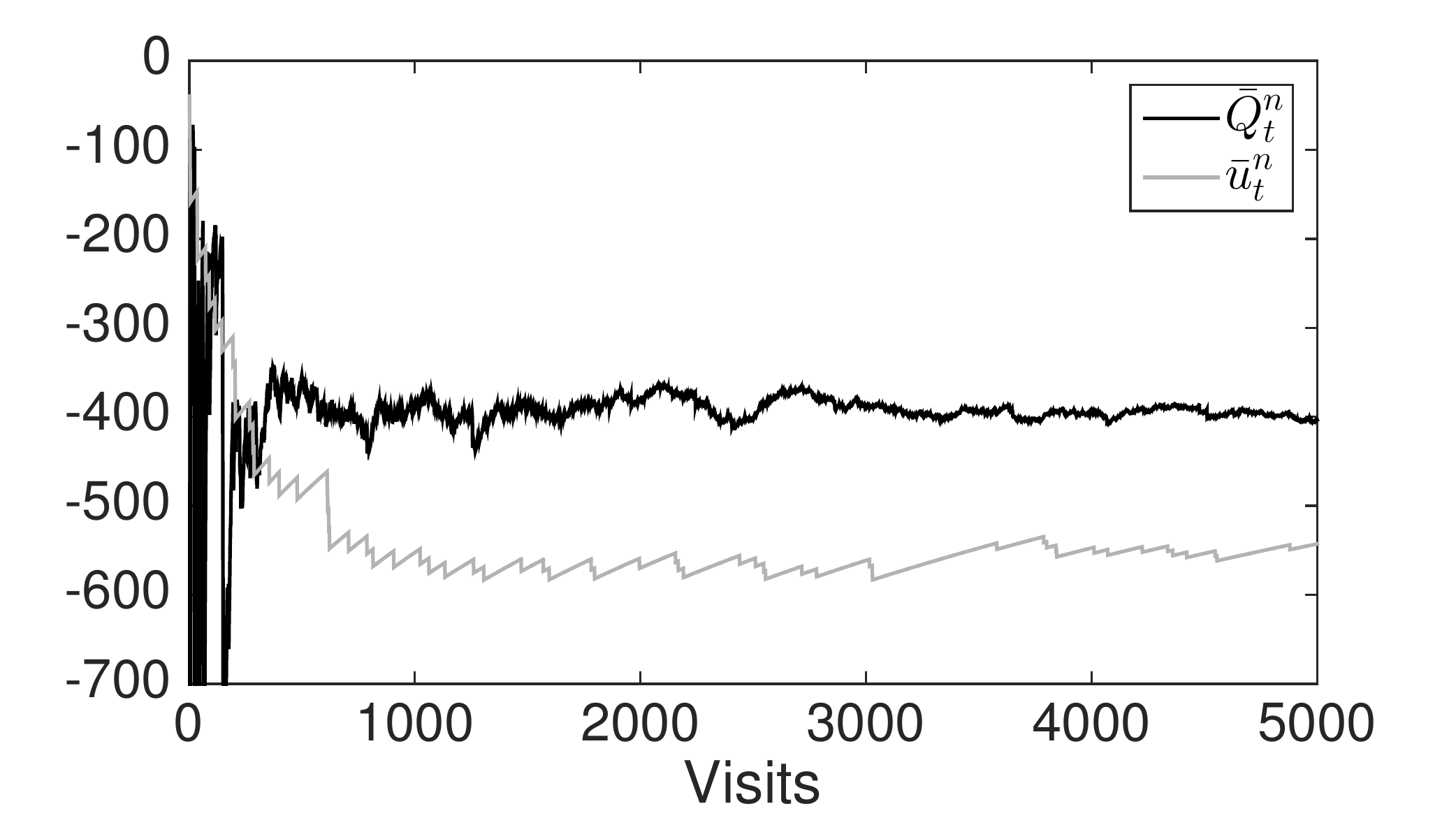}
                \caption{With RDS}
                \label{subfig:yes_is}
        \end{subfigure}
        \caption{Sample Paths of Approximations Generated by Dynamic-QBRM ADP ($\lambda = 0.5$)}
        \label{fig:rds_illustration2}
\end{figure}

In fact, we can illustrate this impact by examining the value function approximations back at time $t=0$, with and without RDS, compared to the SAA optimal value function. Figure \ref{fig:surfs} compares the two methods after $N=5{,}000{,}000$ iterations by varying the bid dimensions of a fixed state $S_0=0$. Notice that in Figure \ref{subfig:surfrds}, the approximation from the RDS procedure very closely resembles the optimal value function of Figure \ref{subfig:surfopt} even after errors are propagated through $T=12$ steps. The same cannot be said of the approximation in Figure \ref{subfig:surfnords}.

\begin{figure}[h]
        \centering
        \begin{subfigure}[b]{0.31\textwidth}
                \centering
                \includegraphics[width=\textwidth]{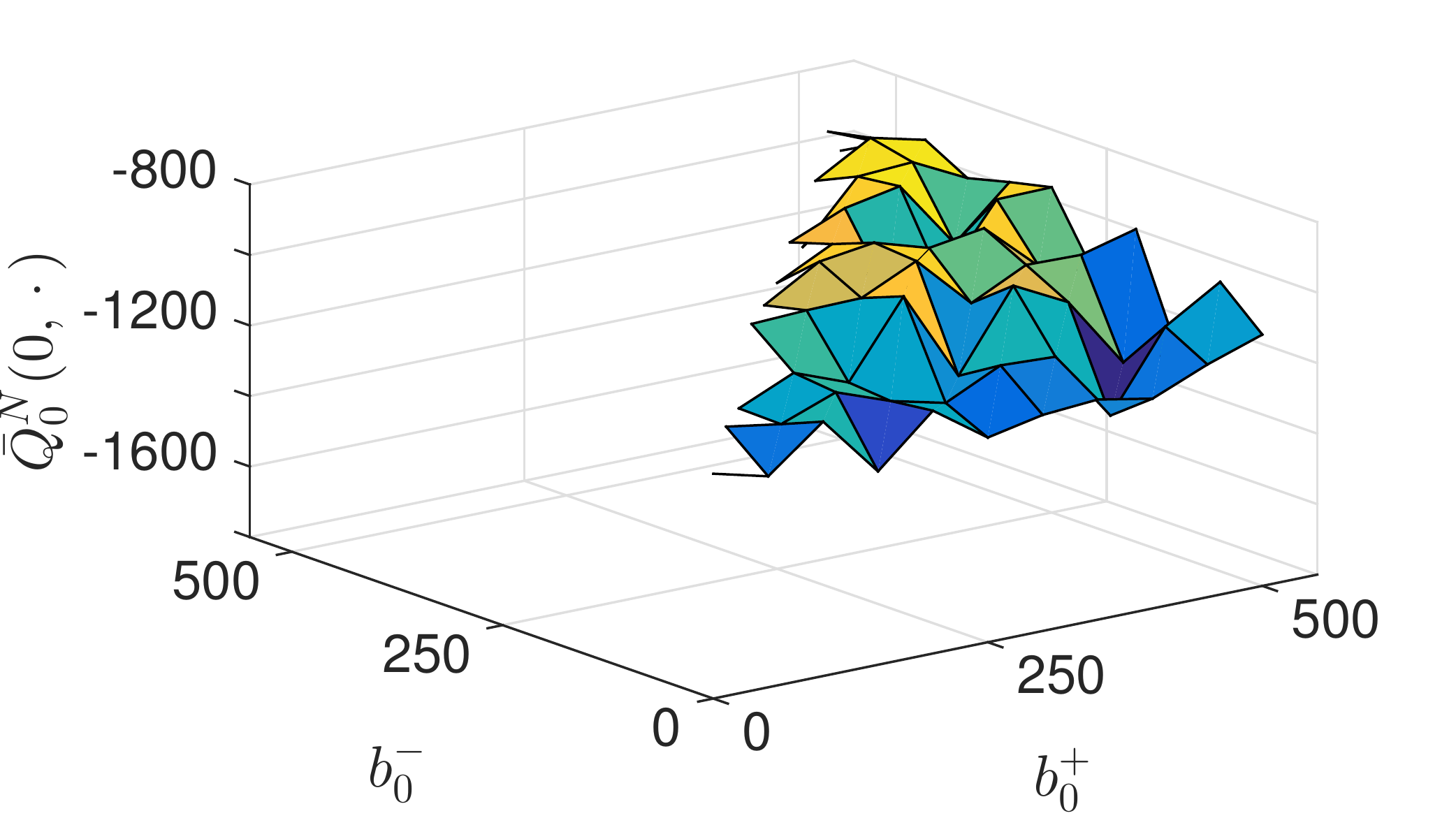}
                \caption{Without RDS}
                \label{subfig:surfnords}
        \end{subfigure}
        \begin{subfigure}[b]{0.31\textwidth}
                \centering
                \includegraphics[width=\textwidth]{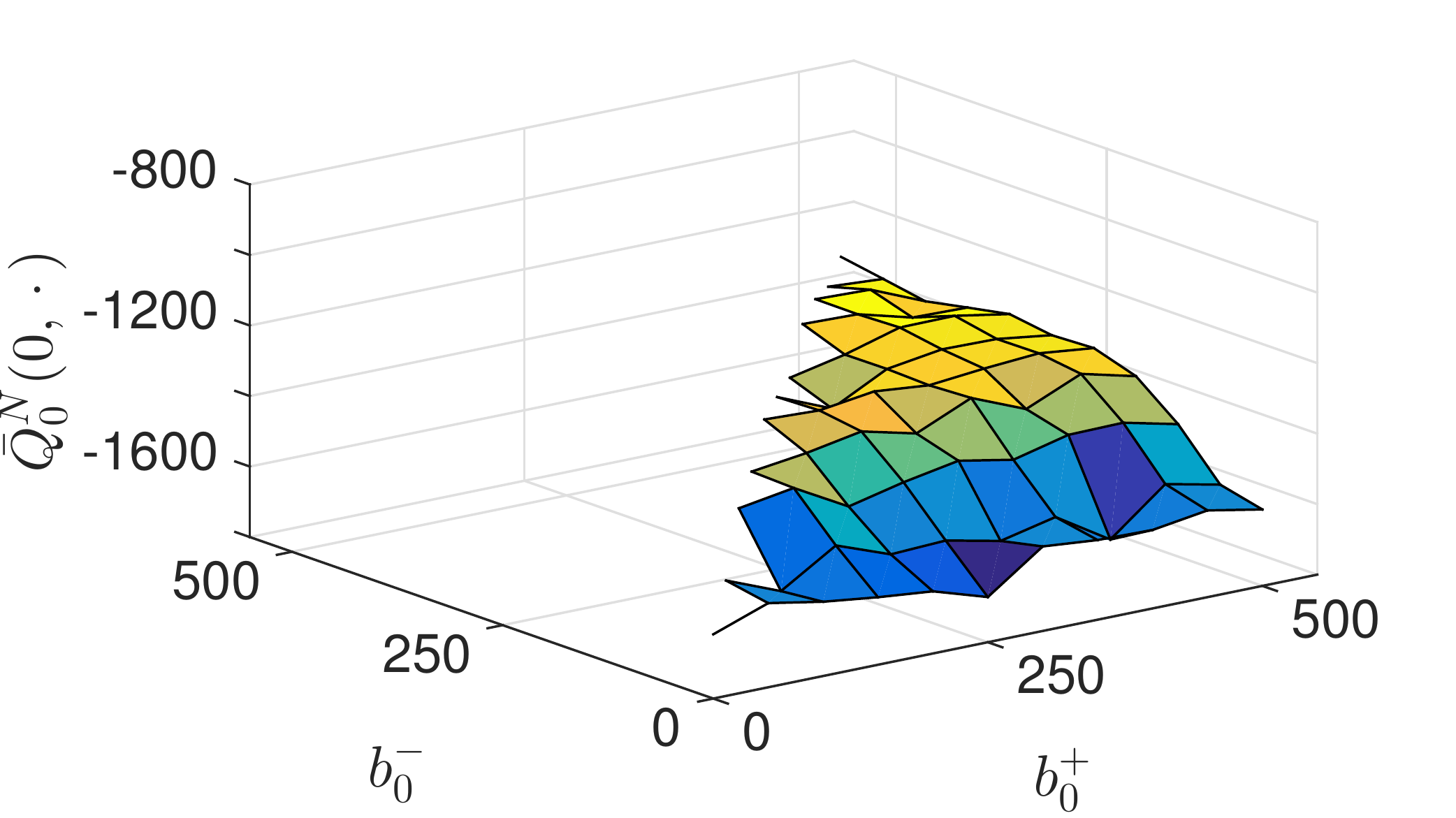}
                \caption{With RDS}
                \label{subfig:surfrds}
        \end{subfigure}
         \begin{subfigure}[b]{0.31\textwidth}
                \centering
                \includegraphics[width=\textwidth]{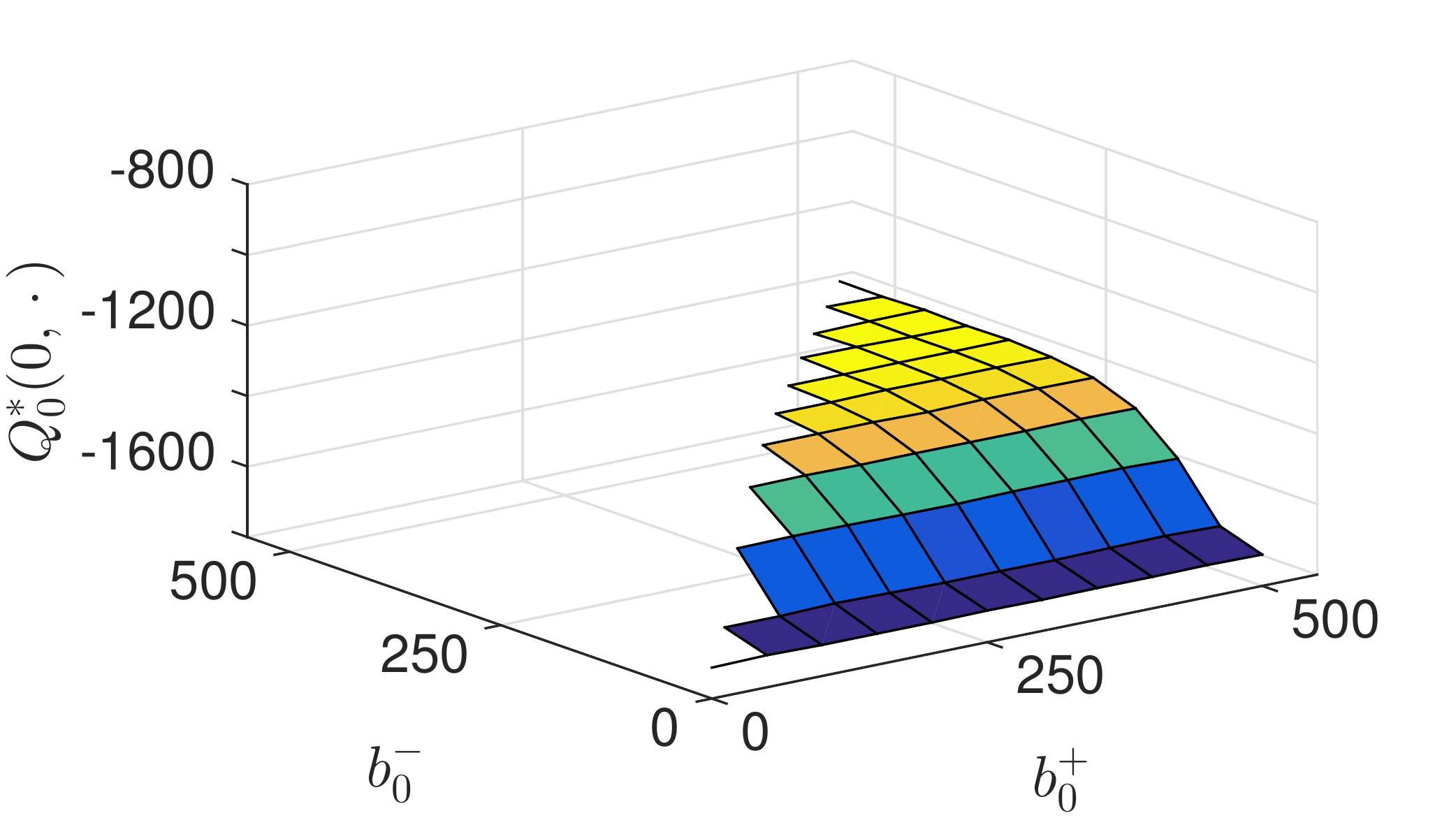}
                \caption{SAA Optimal}
                \label{subfig:surfopt}
        \end{subfigure}
        \caption{Surface Plots of Value Function Approximations at $t=0$ ($\lambda = 0.5$)}
        \label{fig:surfs}
\end{figure}

Lastly, we illustrate that in addition to the improved empirical convergence rates observed in Figures \ref{fig:rds_illustration2} and \ref{fig:surfs}, running Dynamic-QBRM ADP with RDS has a noticeable effect on the resulting policies as well. Using the backward recursive SAA evaluation procedure described above, we plot optimality percentages of risk-averse policies produced by the two variants of the algorithm for $\lambda \in \{0.6, 0.55, 0.5, 0.45, 0.4\}$ in Figure \ref{fig:RDSconvergence}. We see that RDS has the advantage in all cases, especially during the early iterations. Figure \ref{fig:RDSconvergence} uses a log-scale to display the early progress of the algorithms every 50,000 iterations from $N=0$ to $N=1{,}000{,}000$ and also the asymptotic progress every 1,000,000 iterations from $N=1{,}000{,}000$ to $N=5{,}000{,}000$. Our simulations also suggest that as $\lambda$ decreases, the advantage of using RDS diminishes; this is not surprising because decreasing $\lambda$ corresponds to deemphasizing the ``risky'' events that RDS seeks to sample. In fact, we find that in this particular example problem, the algorithms are practically indistinguishable for $\lambda \le 0.3$. Computation times for the three methods are given in Table \ref{table:cputimes}; notice that we are able to run the ADP methods for 5 million iterations in roughly half of the time it takes to compute the SAA optimal solution (the large number of scenarios presents a challenge for the optimization routine implemented in CPLEX). The online companion contains additional numerical results of evaluating our policies on more ``practical'' metrics of risk and reward, rather than the dynamic risk measure objective function. We observe that the policies generated by our algorithms behave in an intuitively appealing way.

\begin{figure}[h]
        \centering
        \begin{subfigure}[b]{0.32\textwidth}
                \centering
                \includegraphics[width=\textwidth]{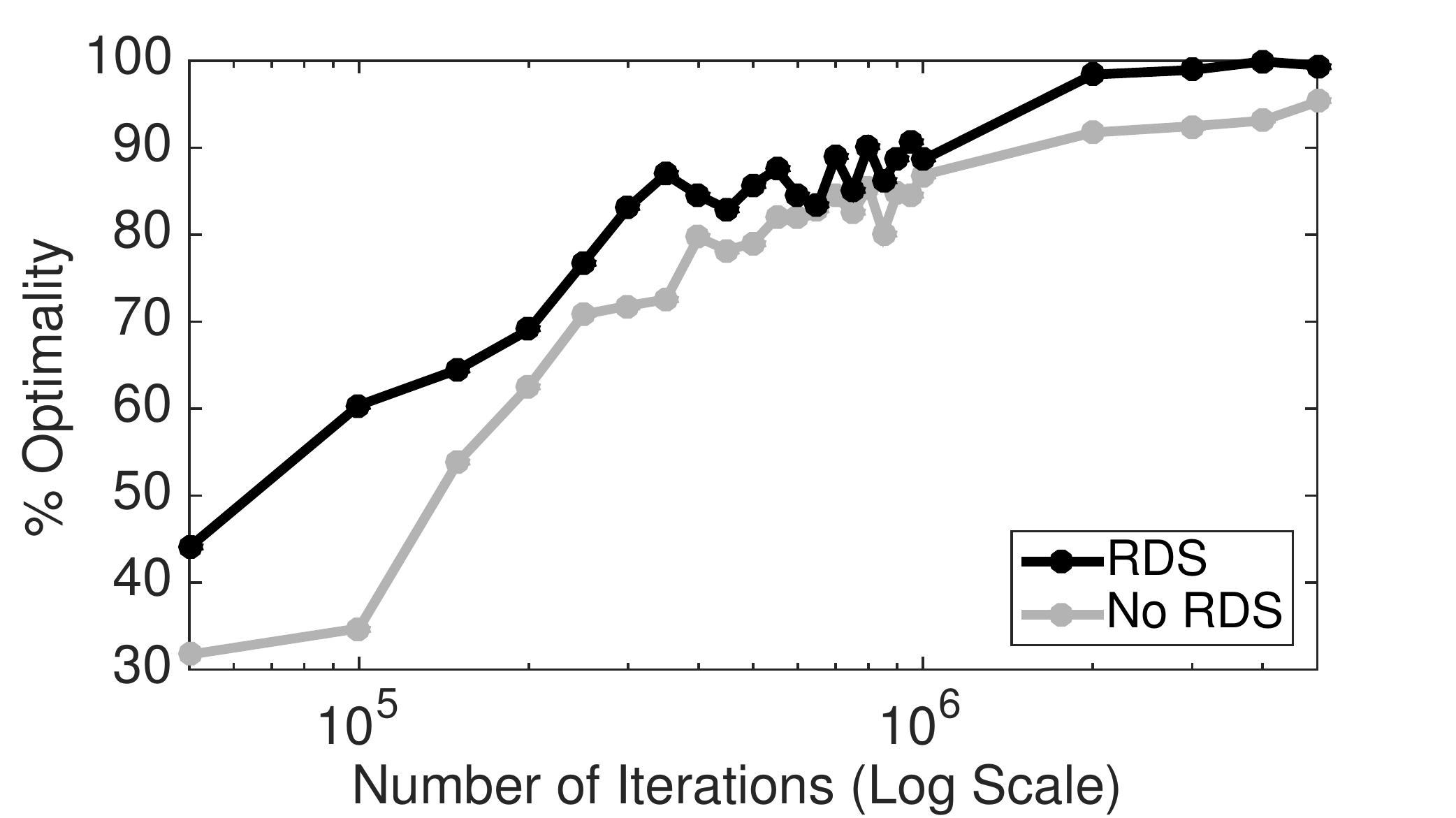}
                \caption{$\lambda = 0.6$}
        \end{subfigure}
        \begin{subfigure}[b]{0.32\textwidth}
                \centering
                \includegraphics[width=\textwidth]{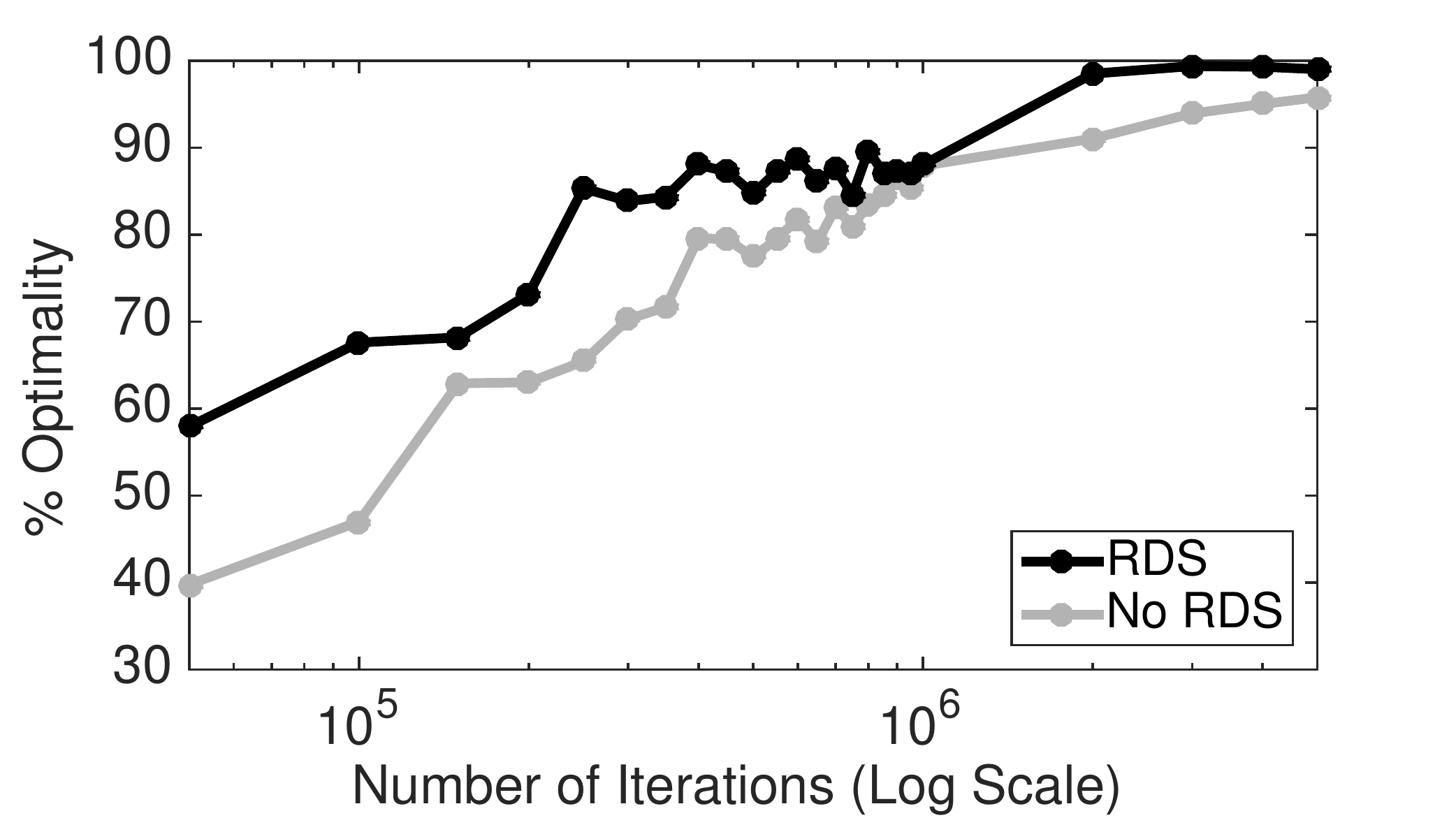}
                \caption{$\lambda = 0.55$}
        \end{subfigure}
        \begin{subfigure}[b]{0.32\textwidth}
                \centering
                \includegraphics[width=\textwidth]{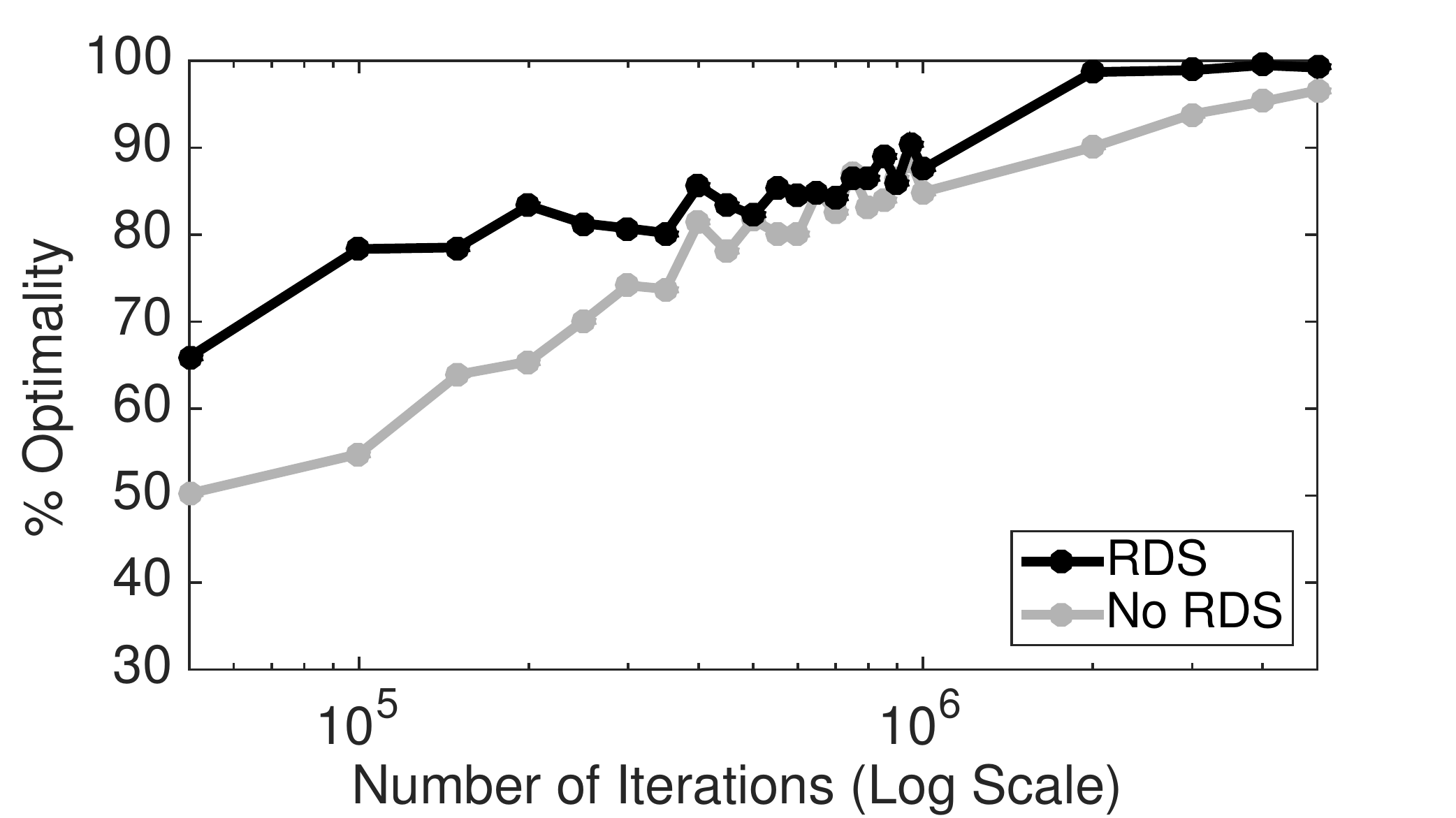}
        		\caption{$\lambda = 0.5$}
        \end{subfigure}\\
       
        \begin{subfigure}[b]{0.32\textwidth}
                \centering
                \includegraphics[width=\textwidth]{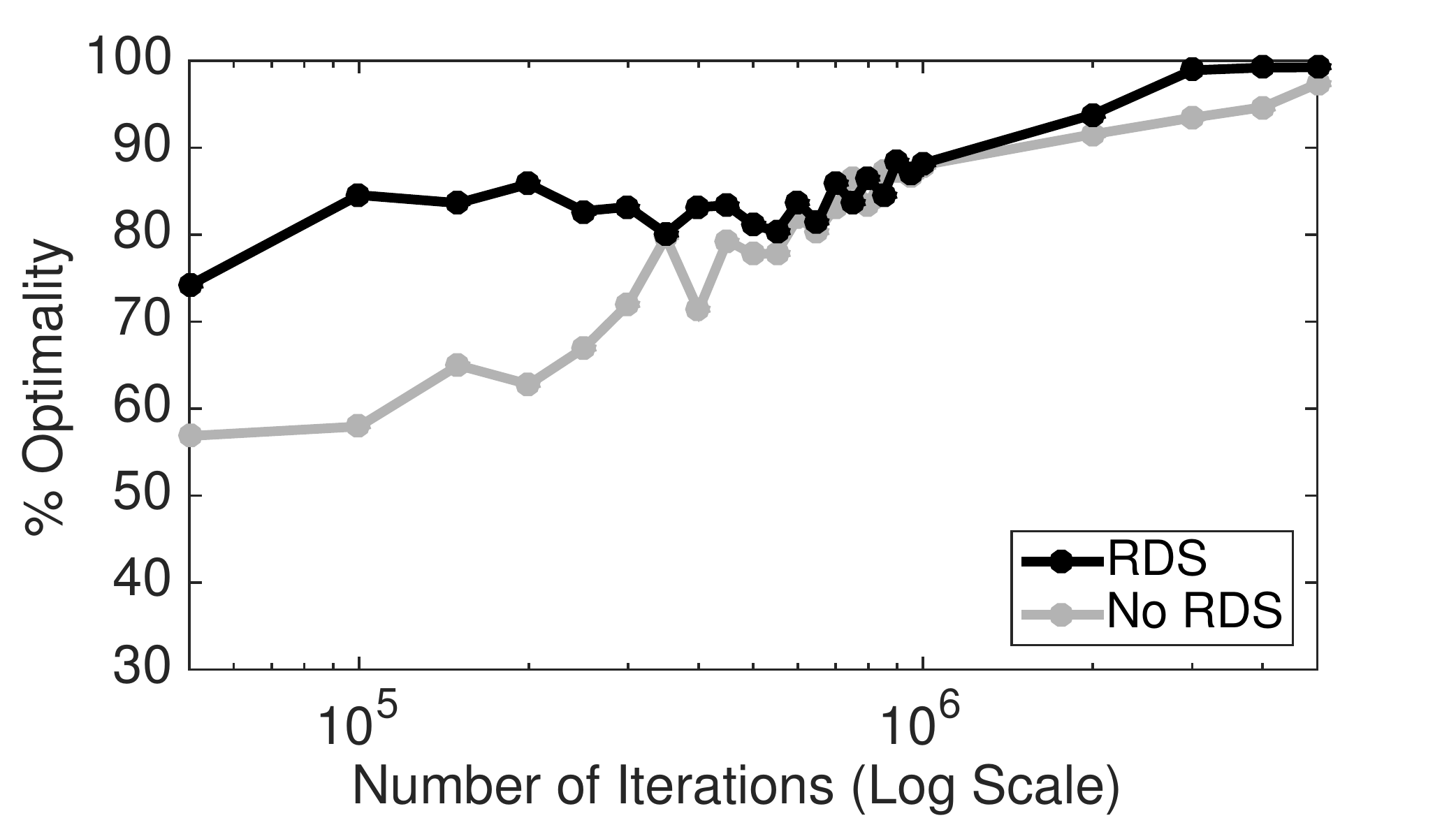}
                \caption{$\lambda = 0.45$}
        \end{subfigure}
        \begin{subfigure}[b]{0.32\textwidth}
                \centering
                \includegraphics[width=\textwidth]{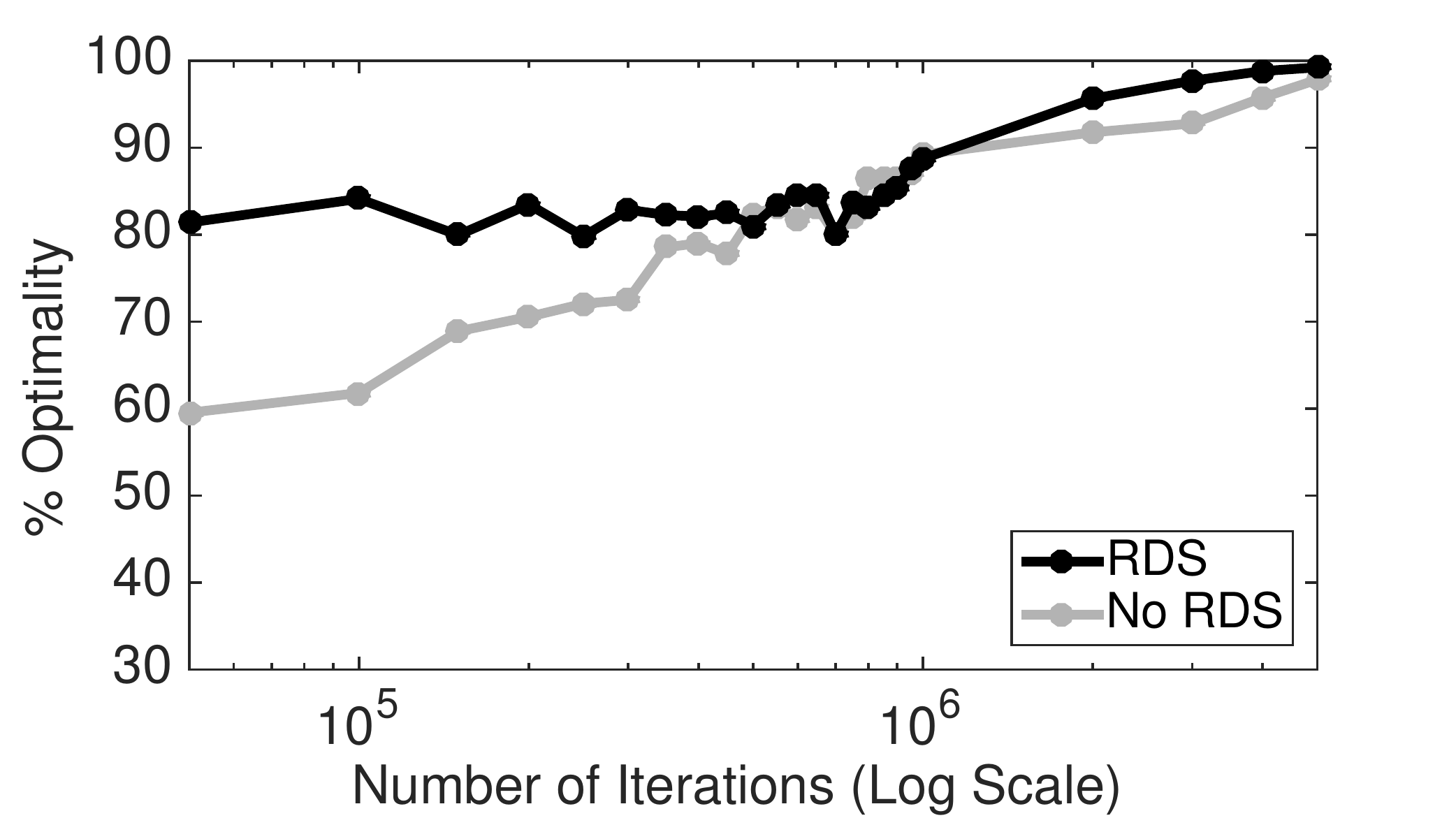}
                \caption{$\lambda = 0.4$}
        \end{subfigure}
        \caption{Comparison of Dynamic-QBRM ADP with and without RDS}
        \label{fig:RDSconvergence}
\end{figure}

\renewcommand{\arraystretch}{1.2}
\begin{table}[h]
\centering
\small
\begin{tabular}{@{}llllll@{}}\toprule
 Method & $\lambda = 0.6$\quad  & $\lambda=0.55$ \quad  & $\lambda=0.5$ \quad  & $\lambda=0.45$ \quad  & $\lambda=0.4$ \quad \\
 \midrule
Dynamic-QBRM ADP  & 810.24	& 819.40 & 816.40 & 830.69 & 811.45\\
Dynamic-QBRM ADP with RDS \quad \quad & 926.25 &	983.33 &	989.87 &	926.85	& 973.50 \\
SAA Backward Recursion & 2119.74 & 2058.36 & 2163.91 & 2118.84 & 2072.79\\
\bottomrule
\end{tabular}
\caption{CPU Times for $N=5,000,000$ Iterations of ADP and SAA Optimal Solution (minutes)}
\label{table:cputimes}
\end{table}

\section{Conclusion}
\label{sec:conclusion}
In this paper, we propose a new ADP algorithm for solving risk-averse MDPs \citep{Ruszczynski2010} under a class of quantile-based risk measures. The algorithm uses a two-step updating procedure to approximate the optimal value function and we prove that it converges almost surely with a rate of $\mathcal O(1/n)$. We also show a companion sampling procedure to more efficiently sample the ``risky'' region of the outcome space and then prove that the sampling distribution converges to one that is, in a sense, the best within a parametric class. Next, we describe an energy storage and bidding application, on which we demonstrate that the RDS sampling approach provides significant benefits in terms of the empirical convergence rate. Moreover, we illustrate that the approximate policies produced by Dynamic-QBRM ADP behave in an intuitive and appealing way in terms of the tradeoff between risk and reward (see online companion), suggesting that it can be readily applied to a variety of problem settings.

\clearpage
\appendix

\section{Proofs}
\label{sec:appendix}
\thmasconv*
\begin{proof}We induct backwards on $t$, starting with the base case $t=T$. Since $Q_T^n(s,a) = Q_T^*(s,a) = 0$ for all $n$ and all $(s,a)$, convergence trivially holds. The induction hypothesis is that $\bar{Q}_{t+1}^n(s,a) \rightarrow Q_{t+1}^*(s,a)$ almost surely, and we aim to show the same statement with $t$ replacing $t+1$.
As we did before, we analyze the statistical error and approximation error, $\epsilon_{t+1}^{q,n},\,\xi_{t+1}^{q,n} \in \mathbb R^d$, except they are now with respect to the stochastic Bellman operator $H_t$:
\begin{align*}
\epsilon_{t+1}^{q,n} &= Q_t^*-H_t \bigl(u^{1,*}_t,\ldots,u^{m,*}_t, Q_{t+1}^*,W_{t+1}^{q,n}\bigr),\\
\xi_{t+1}^{q,n} &= H_t \bigl(u^{1,*}_t,\ldots,u^{m,*}_t, Q_{t+1}^*,W_{t+1}^{q,n}\bigr)-H_t \bigl(\bar{u}^{1,n-1}_t,\ldots,\bar{u}^{m,n-1}_t, \bar{Q}_{t+1}^{n-1},W_{t+1}^{q,n}\bigr).
\label{eq:xihdef}
\end{align*}
Therefore, the update of Step 6 of Algorithm \ref{alg:dq} is equivalent to
\begin{equation}
\bar{Q}_t^n = \Pi_{\mathcal X_t^q} \Bigl \{\bar{Q}_t^{n-1} - \textnormal{diag}(\eta_t^n) \, \Bigl[\bar{Q}_t^{n-1}-Q_t^*+ \epsilon_{t+1}^{q,n} + \xi_{t+1}^{q,n} \Bigr] \Bigr \}.
\label{eq:Q_update}
\end{equation}
It is once again clear that $\mathbf{E} \bigl[ \epsilon_{t+1}^{q,n}(s,a) \, \bigl| \bigr. \, \mathcal G_{t+1}^{n-1} \bigr ] = 0$ almost surely. Furthermore, we can easily argue that for some positive constant $C_H$,
\begin{equation}
\mathbf{E} \Bigl[\bar{Q}_t^{n-1}(s,a) - H_t \bigl(\bar{u}^{1,n-1}_t,\ldots,\bar{u}^{m,n-1}_t, \bar{Q}_{t+1}^{n-1},W_{t+1}^{q,n}\bigr)(s,a)\Bigr]^2 \le C_H,
\label{eq:2momentg}
\end{equation}using Assumption \ref{ass:problem}(i), Assumption \ref{ass:problem}(ii), the fact that $\rho_t^\alpha(0)=\Phi(0,\ldots,0)=0$, and the boundedness of the iterates $\bar{Q}_{t+1}^{n-1}$ and $\bar{u}_t^n$.

Now, fix an $(s,a)$ and let $S_{t+1}^{n}= S^M\bigl(s,a,W_{t+1}^{q,n}\bigr)$. Expanding and using Assumption \ref{ass:problem}(i) and the property (\ref{eq:minmax}) we see that
\begin{align}
 |\xi_{t+1}^{q,n}(s,a)| &\le \begin{aligned}[t] L_\Phi \, \bigl| & \! \min_{a' \in \mathcal A_{S^n_{t+1}}} \!\!\! Q_{t+1}^*(S_{t+1}^{n},a') - \!\! \min_{a' \in \mathcal A^n_{S_{t+1}}}  \!\!\! \bar{Q}_{t+1}^{n-1}(S_{t+1}^{n},a') \bigr| \\
 &+ L_\Phi \, \sum_i \bigl|  \bar{u}^{i,n-1}_{t}(s,a) - u_t^{i,*}(s,a) \bigr| \end{aligned} \nonumber \\
 &\le  L_\Phi  \, \Bigl[ \bigl\| \bar{Q}^{n-1}_{t+1} -Q_{t+1}^* \bigr \|_\infty +   \sum_i \bigl\|  \bar{u}^{i,n-1}_{t} - u_t^{i,*}\bigr\|_\infty \Bigr].
\label{eq:xibound2}
\end{align}
Taking conditional expectation, applying Lemma \ref{lem:uconv}, and using the induction hypothesis (which tells us that $\bar{u}^{i,n}_t \rightarrow u^{i,*}_t$ and $\bar{Q}^n_{t+1} \rightarrow Q^*_{t+1}$ almost surely), we conclude
\[
\mathbf{E} \bigl[ \xi_{t+1}^{q,n}(s,a) \, \bigl| \bigr. \, \mathcal G_{t+1}^{n-1} \bigr ] \rightarrow 0 \quad a.s.,
\] satisfying one of the conditions of \cite[Theorem 2.4]{Kushner2003}. In addition, note the bounded second moment condition of (\ref{eq:2momentg}), the conditional unbiasedness of $\epsilon_{t+1}^{q,n}(s,a)$, the stepsize, sampling, and truncation properties of Assumption \ref{ass:algorithm}. We once again have the ingredients to apply the stochastic approximation convergence theorem \cite[Theorem 2.4]{Kushner2003} (the objective function for applying the theorem is $q \mapsto \|q-Q_t^*\|_2^2$) to the update equation of (\ref{eq:Q_update}) in order to conclude 
\begin{equation*}
\bar{Q}_t^n(s,a) \rightarrow Q_t^*(s,a) \quad a.s.,
\label{eq:qconv}
\end{equation*}
for every $(s,a) \in \mathcal U$, completing both the inductive step and the proof.
\end{proof}

\propepsgreedy*
\begin{proof}
Let us consider, for a fixed $t$ and $(s,a)$, the stepsize sequence
\[\gamma_t^n(s,a) = \frac{\gamma_t}n \, \mathbf{1}_{\{ (s,a) = (S_t^n, a_t^n) \}}.
\]
The argument for the case of $\eta_t^n(s,a)$ is, of course, exactly the same and is omitted. First, notice that the second part, i.e., $\sum_{n=1}^\infty \gamma_t^n(s,a)^2 < \infty$ almost surely,
is trivial. We thus focus on proving that the second part of the assumption, that $\sum_{n=1}^\infty \gamma_t^n(s,a) = \infty$ almost surely, holds in this case. An iteration $n$ is called a \emph{visit} to state $(s,a)$ if $(s,a)=(S_t^n, a_t^n)$ (so the stepsize is nonzero). Let $\{ \Delta_t^k(s,a) \}_{k \ge 1}$ be a process that describes the \emph{interarrival times} for visits to the state $(s,a)$; in other words, $\Delta_t^k(s,a)$ is the number of iterations that pass between the $(k-1)$-st and $k$-th visits to the state $(s,a)$. The \emph{arrival time} sequence (the iterations for which $(s,a)$ is visited) is thus given by the sum
\[
N_t^k(s,a) = \sum_{k' = 1}^k \Delta_t^{k'}(s,a) \; \mbox{for $k > 0$.}
\]
To simplify notation, we henceforth drop the dependence of these processes on $(s,a)$ and use $\Delta_t^k= \Delta_t^k(s,a)$ and $N_t^k = N_t^k(s,a)$. By selecting only the iterations for which the stepsizes are nonzero, we see that
\begin{align*}
\sum_{k=1}^\infty \gamma_t^{N_t^k}= \sum_{n=1}^\infty \gamma_t^n(s,a).
\end{align*}
Analogous to $N_t^k$ process, define the deterministic sequence
\[
n_t^k = \left \lceil \sum_{k'=1}^k \delta_t^{k'} \right \rceil \quad \mbox{and} \quad \delta_t^k = \frac{-2}{\log \left(1-\varepsilon \right)} \, \log k.
\]
Observe that under the $\varepsilon$-greedy sampling policy, the event $\bigl \{(s,a) \ne (S_t^n, a_t^n) \bigr \}$ occurs with probability at most $1-\varepsilon$. Hence, by independence, we can show that
\[
\mathbf{P} \bigl( \Delta_t^k \ge \delta_t^k \bigr) \le \left(1 - \varepsilon \right)^{\lceil \delta_t^k \rceil} \le \frac{1}{k^2}.
\]
By the Borel-Cantelli Lemma, with probability 1, the events $\bigl \{  \Delta_t^k \ge \delta_t^k \bigr \}$ occur finitely often. Hence, there exists an almost surely finite $K_t$ such that for all $k \ge K_t$, it is true that $\Delta_t^k \le \delta_t^k$. Clearly, $n_t^k \le \mathcal O( k\,\log k)$ and 
\begin{equation}
\sum_{k=1}^\infty \gamma_t^{n_t^k} = \sum_{k=1}^\infty \frac{\gamma_t}{n_t^k} = \infty
\label{eq:propcondition}
\end{equation}
holds. Let $C_1 = \sum_{i=1}^{K_t-1}\frac{\gamma_t}{N_t^{i}}$ and $C_2 = \sum_{i=1}^{K_t-1}\Delta_t^i$ be almost surely finite random variables. Some simple manipulations yield (the following chain of inequalities may be analyzed $\omega$-wise to obtain the $a.s.$ qualification)
\begin{align*}
\sum_{k=1}^\infty \gamma_t^{N_t^k}= \sum_{k=1}^\infty \frac{\gamma_t}{N_t^k} &= C_1 + \sum_{k=K_t}^\infty \frac{\gamma_t}{C_2 + \sum_{i=K_t}^k \Delta_t^i}\\
& \ge C_1 + \sum_{k=K_t}^\infty \frac{\gamma_t}{C_2 + \sum_{i=K_t}^k \delta_t^i}\\
& \ge C_1 + \sum_{k=K_t}^\infty \frac{\gamma_t}{C_2-\sum_{i=0}^{K_t-1} \delta_t^i + n_t^k} = \infty \quad a.s.,
\end{align*}
where the final equality follows from (\ref{eq:propcondition}).
\end{proof}

\boundtwo*
\begin{proof}
Note that although we omit its algebraic form, the existence of $C_{g}$ is guaranteed by Assumption \ref{ass:problem}(ii) and the boundedness of $\bar{Q}_t^{n-1}(s,a)$.
Let us recall the update equation given in Step 7 of Algorithm \ref{alg:dq} can be rewritten as
\[
\bar{Q}_t^n = \Pi_{\mathcal X_t^q} \Bigl \{\bar{Q}_t^{n-1} - \textnormal{diag}(\eta_t^n) \, \Bigl[\bar{Q}_t^{n-1}-Q_t^*+ \epsilon_{t+1}^{q,n} + \xi_{t+1}^{q,n} \Bigr] \Bigr \}.
\]
Expanding, we have
\begin{equation}
\begin{aligned}
\bigl \|\bar{Q}_t^{n}-Q_t^* \bigr\|_2^2 \le \bigl\| \bar{Q}_t^{n-1}- Q_t^*&  \bigr\|_2^2 + \bigl\| \diag(\eta_t^{n}) \bigl[ \bar{Q}_t^{n-1}-Q_t^*+ \epsilon_{t+1}^{q,n} + \xi_{t+1}^{q,n}  \bigr] \bigr\|_2^2\\
&- 2 \, \bigl(\bar{Q}_t^{n-1} - Q_t^* \bigr)^\mathsf{T} \diag(\eta_t^{n})\, \bigl[ \bar{Q}_t^{n-1}-Q_t^*+ \epsilon_{t+1}^{q,n} + \xi_{t+1}^{q,n}   \bigr].\\
\end{aligned}
\label{eq:2le0}
\end{equation}
We focus on the cross term. First, by the $\varepsilon$-greedy sampling policy, notice that
\begin{align}
\mathbf{E} \Bigl[ \bigl(\bar{Q}_t^{n-1} - Q_t^* \bigr)^\mathsf{T} \diag(\eta_t^{n})\, \bigl( \bar{Q}_t^{n-1} - Q_t^* \bigl) \, \bigl|  \, \mathcal G_{t+1}^{n-1} \Bigr] \ge \frac{\varepsilon \eta_t}{n} \, \bigl \| \bar{Q}_t^{n-1} - Q_t^*  \bigr \|_2^2, \label{eq:2le1}
\end{align}
and by the definition of $\epsilon_{t+1}^{q,n}$,
\begin{align}
\mathbf{E} \Bigl[ \bigl(\bar{Q}_t^{n-1} - Q_t^* \bigr)^\mathsf{T} \diag(\eta_t^{n})\, \epsilon_{t+1}^{q,n} \, \bigl|  \, \mathcal G_{t+1}^{n-1} \Bigr] =0.\label{eq:2le2}
\end{align}
Using the bound (\ref{eq:xibound2}), the fact that $\eta_t^n$ contains exactly one nonzero component, and the monotonicity of the $l_p$ norms, we can see that
\begin{align*}
\mathbf{E} \Bigl[ -\, (\bar{Q}_t^{n-1} - Q_t^*)^\mathsf{T} \diag(\eta_t^{n})\, \xi_{t+1}^{q,n} \, \bigl|  \, \mathcal G_{t+1}^{n-1} \Bigr]
&\le \begin{aligned}[t]\frac{\eta_t}{n} \, L_\Phi \, & \Bigl[  \bigl \| \bar{Q}_t^{n-1} - Q_t^*  \bigr\|_\infty \, \bigl\| \bar{Q}_{t+1}^{n-1} - Q_{t+1}^* \bigr\|_\infty \\
&+  \bigl \| \bar{Q}_t^{n-1} - Q_t^*  \bigr\|_\infty \, \sum_i \,\bigl\|  \bar{u}^{i,n-1}_{t} - u_t^{i,*}\bigr\|_\infty\Bigr]
\end{aligned}\\
&\le \begin{aligned}[t]\frac{\eta_t}{n} L_\Phi \, & \Bigl[   \bigl \| \bar{Q}_t^{n-1} - Q_t^*  \bigr\|_2 \, \bigl\| \bar{Q}_{t+1}^{n-1} - Q_{t+1}^* \bigr\|_2 \\
&+ \bigl \| \bar{Q}_t^{n-1} - Q_t^*  \bigr\|_2 \, \sum_i \,\bigl\|  \bar{u}^{i,n-1}_{t} - u_t^{i,*}\bigr\|_2\Bigr].
\end{aligned}
\end{align*}
Again applying $2ab \le a^2 \kappa +b^2/\kappa$, we see that for any constants $\kappa_k > 0$, $k = 0,1,\ldots,m$,
\begin{equation}
\begin{aligned}
\mathbf{E} \Bigl[ -2\, (\bar{Q}_t^{n-1} - Q_t^*)^\mathsf{T} \diag(\eta_t^{n})\, \xi_{t+1}^{\phi,n} \, \bigl|  \, \mathcal G_{t+1}^{n-1} &\Bigr] 
\le \frac{\eta_t}{n} \, L_\Phi \,\biggl[ \Bigl(\kappa_0 + \sum_{i=1}^m \kappa_i\Bigr) \, \bigr \| \bar{Q}_t^{n-1} - Q_t^* \bigr \|_2^2\\
&+\frac{1}{\kappa_0} \, \bigl\| \bar{Q}_{t+1}^{n-1} - Q_{t+1}^* \bigr\|_2^2 + \sum_{i=1}^m \frac{1}{\kappa_i} \, \bigl\|\bar{u}_t^{i,n-1} - u_t^{i,*} \bigr\|_2^2 \biggr].
\end{aligned}
\label{eq:2le3}
\end{equation}
The statement of the lemma follows by taking expectations of (\ref{eq:2le0}), (\ref{eq:2le1}), (\ref{eq:2le2}), and (\ref{eq:2le3}), using the $C_H$ bound, and combining.
\end{proof}

\thmconvrate*
\begin{proof}
First, we state a special case of Chung's Lemma (see \cite{Chung1954}), which is useful for analyzing a specific type of sequence that arises often in recursive optimization algorithms (see, e.g., \cite{Juditsky2009}, \cite{Rakhlin2012}).
\begin{lemma}[\cite{Chung1954}]
Consider a sequence $\{a^n\}$. Suppose that the following recursive inequality holds for some $b >1$ and every $n \ge 1$:
\[a^n \le \left(1- \frac{b}{n}\right) \, a^{n-1} + \frac{c}{n^2}.\]
Then, if $k \ge \max \left\{ \frac{c}{b-1}, \,a^0 \right\}$, it follows that $a^n \le \frac{k}{n}$ for $n \ge 1$.
\label{lem:seqrate}
\end{lemma}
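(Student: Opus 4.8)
The plan is to prove Lemma~\ref{lem:seqrate} by induction on $n$, viewing the recursive inequality as a discrete Gr\"onwall-type estimate; this is the classical argument of \cite{Chung1954}. Fix $k \ge \max\{c/(b-1),\,a^0\}$ (the conclusion for any larger value then follows a fortiori) and claim that $a^n \le k/n$ for every $n \ge 1$. For the base case $n=1$, I would substitute $n=1$ into the hypothesis, which gives $a^1 \le (1-b)\,a^0 + c$: since $b>1$ the coefficient $1-b$ is negative, so the (nonnegative) $a^0$-term drops out and $a^1$ is controlled by $c$, which is why only the one-sided bound $k \ge a^0$ is imposed rather than a two-sided one. (In the applications of this lemma — the proof of Theorem~\ref{thm:convrate1} — the sequence $a^n = \mathbf{E}[\,\|\cdot\|_2^2\,]$ is nonnegative, so this use of nonnegativity at $n=1$ is automatic.)

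For the inductive step, assume $a^{n-1}\le k/(n-1)$ for some $n\ge 2$ and plug into the recursion:
\[
a^n \;\le\; \Bigl(1-\frac{b}{n}\Bigr)\frac{k}{n-1} + \frac{c}{n^2}
\;=\; \frac{k}{n} + \frac{k}{n(n-1)} - \frac{bk}{n(n-1)} + \frac{c}{n^2}
\;=\; \frac{k}{n} - \frac{(b-1)k}{n(n-1)} + \frac{c}{n^2}.
\]
Hence it suffices to verify $\frac{c}{n^2} \le \frac{(b-1)k}{n(n-1)}$, i.e.\ $\frac{c\,(n-1)}{n} \le (b-1)\,k$; and this holds because $(n-1)/n < 1$ while $c/(b-1) \le k$ by the choice of $k$, so $\frac{c(n-1)}{n} < c \le (b-1)k$. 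This gives $a^n \le k/n$ and closes the induction.

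I do not expect a genuine obstacle — the statement is short and classical — so the remarks here are about bookkeeping rather than difficulty. The two points that deserve attention are (i) tracking the sign of $1-b/n$ for small $n$, which at $n=1$ is what makes the $a^0$-contribution harmless, and (ii) the elementary algebra above that collapses the inductive step to the single scalar inequality $c(n-1)/n \le (b-1)k$. If one prefers to sidestep explicitly initializing the induction, an equivalent packaging is a minimal-counterexample argument: were $a^n > k/n$ for some $n$, take the least such $n$; then necessarily $n\ge 2$ and $a^{n-1}\le k/(n-1)$, and the displayed computation yields the contradiction $a^n \le k/n$.
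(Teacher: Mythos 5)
First, a point of comparison: the paper never proves Lemma \ref{lem:seqrate} at all --- it is quoted as ``a special case of Chung's Lemma'' with a citation to \cite{Chung1954} --- so there is no internal proof to match against; an induction of the kind you wrote is indeed the standard route, and your algebra in the inductive step (collapsing everything to $c(n-1)/n \le (b-1)k$) is the right computation \emph{when it applies}.

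The gap is the sign of $1-b/n$. Multiplying the induction hypothesis $a^{n-1}\le k/(n-1)$ by $(1-b/n)$ preserves the inequality only when $1-b/n\ge 0$, i.e.\ $n\ge b$; for $1\le n<b$ the inequality reverses, and the best you can extract (using nonnegativity, itself an unstated hypothesis) is $a^n\le c/n^2$, which is $\le k/n$ only if $k\ge c/n$. Your base case has exactly this problem: you show $a^1\le c$ but never verify $c\le k$, and $k\ge c/(b-1)$ does \emph{not} imply $k\ge c$ when $b>2$. This is not repairable bookkeeping, because the statement as printed is false for $b>2$: take $b=3$, $c=2$, $a^0=0$, $a^1=2$ (the $n=1$ inequality only demands $a^1\le 2$), extend by equality for $n\ge 2$, and take $k=1=\max\{c/(b-1),a^0\}$; then $a^1>k/1$. (One can even arrange a nonnegative counterexample with $b=2.5$, $c=1$, $a^1=1$, $k=2/3$.) So your proof goes through only under an extra hypothesis: either restrict to $1<b\le 2$, where $k\ge c/(b-1)\ge c$ closes your base case and $1-b/n\ge0$ for all $n\ge2$ --- this is in fact all the paper needs, since in the proof of Theorem \ref{thm:convrate1} the constants $\gamma_t,\kappa,\eta_t,\kappa_i$ are chosen precisely so that the contraction coefficient is $1-2/n$, i.e.\ $b=2$ --- or strengthen the condition to $k\ge\max\{c/(b-1),\,c,\,a^0\}$ (with $a^n\ge0$) and treat the indices $n<b$ separately via $a^n\le c/n^2$. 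Finally, your parenthetical explanation of the role of $k\ge a^0$ is off the mark: your argument never uses that hypothesis, which is itself a signal that the base case has not been matched to the stated assumptions.
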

To prove the theorem, we use an induction argument for $\bar{Q}^n$. Let us first consider $t=T$ as the base case. Since $\mathbf{E} \bigl[\bigl \|\bar{Q}_{T}^{n} - Q_{T}^*  \bigr\|_2^2\bigr] = 0$,
it is trivially $\mathcal O(1/n)$, so let us take our induction hypothesis to be $\mathbf{E} \bigl[\bigl \|\bar{Q}_{t+1}^{n} - Q_{t+1}^*  \bigr\|_2^2\bigr] \le \mathcal O(1/n)$ for a particular $t < T$. Hence, there exists $k_{t+1}^q > 0$ independent of $n$ such that $\mathbf{E} \bigl[\bigl \|\bar{Q}_{t+1}^{n-1} - Q_{t+1}^*  \bigr\|_2^2\bigr] \le \frac{k_{t+1}^q}{n}$ holds for $n \ge 1$ (note the $n-1$ on the left hand side).
We first apply Lemma \ref{lem:bound1} with $\gamma_t = \frac{2}{\varepsilon \, C_{l_f}}$ and $\kappa = \frac{\varepsilon \, C_{l_f}}{C_{L_F}}$, so by Lemma \ref{lem:seqrate}, we can take 
\[
k_t^i = 2\,\max \left\{\frac{2 \, C_{L_F}}{\varepsilon^2 \, C_{l_f}^2} \, k^q_{t+1} + \frac{4\,C_{\alpha_i}}{\varepsilon^2 \, C_{l_f}^2}, \, \bigl \|\bar{u}_t^{i,0} - u_t^{i,*} \bigr \|_2^2 \right\}
\]
to satisfy $\mathbf{E} \bigl[ \bigl \|\bar{u}_t^{n} - u_t^*  \bigr\|_2^2  \bigr] \le \frac{k_t^i}{2n}$ for $n \ge 1$. It follows that $\mathbf{E} \bigl[ \bigl \|\bar{u}_t^{n-1} - u_t^*  \bigr\|_2^2  \bigr] \le \frac{k_t^i}{n}$ for $n \ge 2$. Finally, we apply Lemma \ref{lem:bound2} with $\eta_t = \frac{2}{\varepsilon}$, $\kappa_0 = \frac{\varepsilon}{2 \, L_\Phi}$, and $\kappa_i = \frac{\varepsilon}{2 \, m \,L_\Phi}$ for all $i$. Again by Lemma \ref{lem:seqrate}, we can choose
\[
k_t^q = \max \left \{\frac{4\,L_\Phi^2}{\varepsilon^2} \, k^q_{t+1} + \frac{4\, m\,L_\Phi^2}{\varepsilon^2} \, \sum_i k_t^i + \frac{4 \, C_{H} }{\varepsilon^2}, \,\bigl \|\bar{Q}_t^{0} - Q_t^{*}  \bigr\|_2^2 \right\},
\]
which gives $\mathbf{E} \Bigl[ \bigl \|\bar{Q}_t^{n} - Q_t^*  \bigr\|_2^2  \Bigr] \le \frac{k_t^q}{n}$ for $n \ge 2$, the $\mathcal O(1/n)$ rate needed to complete the inductive step. Therefore, we can conclude that
\[
\mathbf{E} \bigl[ \bigl\|\bar{Q}^{n} - Q^* \bigr \|_2^2 \bigr] = \sum_{t=0}^T \, \mathbf{E} \bigl[ \bigl\|\bar{Q}_t^{n} - Q_t^* \bigr \|_2^2 \bigr] \le  \mathcal O\left(1/n\right),
\]
as desired. As for the convergence of $\mathbf{E} \bigl[ \bigl\|\bar{u}^{i,n} - u^{i,*} \bigr \|_2^2 \bigr]$, we note that the values $k_t^i$ obtained through the induction above can be used to deduce the $\mathcal O\left(1/n\right)$ convergence rate.
\end{proof}

Similar to the sequences $\xi_{t+1}^{i,n}$ and $\xi_{t+1}^{q,n}$ used in the proofs of Lemma \ref{lem:uconv} and Theorem \ref{thm:asconv}, let us introduce another useful process $\xi_{t+1}^{h,n}(s,a) \in \mathbb R^K$ for each $t$ and $(s,a)$. As in Step 7 of Algorithm \ref{alg:dq_rds}, let $w = W_{t+1}^{q,n}$. We define
\[
\xi_{t+1}^{h,n}(s,a) = \Bigr[ \bigl|H_t^{*}(w\<|\<s,a)\bigr|  - \bigl|H_t^{n}(w\<|\<s,a)\bigr| \Bigr]\phi(w) \, p_t(w) \, p^u(w) \, \bar{p}_t^{n-1}(w\<|\<s,a)^{-1},
\]
representing the error from using $H_t^{n}(w\<|\<s,a)$ to approximate $H_t^{*}(w\<|\<s,a)$.

\begin{restatable}{lemma}{lembetaxisum}
Under the $\varepsilon$-greedy sampling policy of (\ref{eq:epsilongreedy}) and Assumptions \ref{ass:algorithm}(vi)--\ref{ass:phisupport}, it follows that
\[
\sum_{n=1}^\infty \, \tilde{\beta}_t^{n-1} \,  \mathbf{E} \Bigl[ \bigl\| \xi_{t+1}^{h,n}(s,a)\bigr \|_2 \, \bigl | \, \mathcal G_{t+1}^{n-1} \Bigr] < \infty \quad a.s.
\]
for each $t$ and $(s,a)$.
\label{lem:betaxi_sum}
\end{restatable}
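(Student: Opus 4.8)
The plan is to dominate the conditional expectation $\mathbf{E}\bigl[\|\xi_{t+1}^{h,n}(s,a)\|_2 \mid \mathcal G_{t+1}^{n-1}\bigr]$ by a fixed constant times $\Delta^{n-1}:=\|\bar Q_{t+1}^{n-1}-Q_{t+1}^*\|_\infty+\sum_{i=1}^m\|\bar u_t^{i,n-1}-u_t^{i,*}\|_\infty$, and then to combine the $\mathcal O(1/n)$ convergence rates with the stepsize moment bound of Assumption \ref{ass:stepbeta}(iii) to obtain a summable expected series, from which the almost sure statement follows by Tonelli (all terms being nonnegative). The first step is a $w$-free bound on the integrand of $\xi_{t+1}^{h,n}$: the reverse triangle inequality gives $\bigl|\,|H_t^{*}(w\<|\<s,a)|-|H_t^{n}(w\<|\<s,a)|\,\bigr|\le|H_t^{*}(w\<|\<s,a)-H_t^{n}(w\<|\<s,a)|$, and $H_t^{*}$ and $H_t^{n}$ are the same function $\Phi$ evaluated at arguments that differ only in the continuation-cost coordinate ($Q_{t+1}^*$ versus $\bar Q_{t+1}^{n-1}$) and in the $m$ quantile coordinates ($u_t^{i,*}$ versus $\bar u_t^{i,n-1}$), so by the Lipschitz property of $\Phi$ (Assumption \ref{ass:problem}(i)) together with the elementary estimate (\ref{eq:minmax}) --- exactly the computation behind (\ref{eq:xibound2}) --- I get $\bigl|\,|H_t^{*}(w\<|\<s,a)|-|H_t^{n}(w\<|\<s,a)|\,\bigr|\le L_\Phi\,\Delta^{n-1}$, which is independent of $w$. (Recall $H_t^{n}$ uses the bare $\Phi$; the likelihood ratio $L_{t+1}^n$ is not part of it.)

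The second step is to absorb the remaining $w$-dependent factor. Conditionally on $\mathcal G_{t+1}^{n-1}$ the sample $w=W_{t+1}^{q,n}(s,a)$ has density $\bar p_t^{n-1}(\cdot\<|\<s,a)$, so a change of measure gives
\[
\mathbf{E}\bigl[\,\|\phi(w)\|_2\,p_t(w)\,p^u(w)\,\bar p_t^{n-1}(w\<|\<s,a)^{-1}\,\big|\,\mathcal G_{t+1}^{n-1}\,\bigr]\le\int\|\phi(w)\|_2\,p_t(w)\,p^u(w)\,dw=:C_\xi,
\]
with $\bar p_t^{n-1}(\cdot\<|\<s,a)$ cancelling against the sampling measure, and $C_\xi<\infty$ because $p^u$ is supported on the compact set $\widebar{\mathcal W}$ while $\|\phi\|_2\le\|\phi\|_1=\sum_k\phi^k$ with the $\phi^k$ bounded continuous densities. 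Multiplying this by the $w$-free bound of the first step (and, as in the earlier proofs, treating the $(n-1)$-st iteration quantities as known given $\mathcal G_{t+1}^{n-1}$) yields $\mathbf{E}\bigl[\|\xi_{t+1}^{h,n}(s,a)\|_2\mid\mathcal G_{t+1}^{n-1}\bigr]\le C_\xi L_\Phi\,\Delta^{n-1}$. I expect this cancellation to be the delicate point: one must check that the ratio is integrated against precisely the sampling density so that it is absorbed (it is enough that the $w$-factor vanishes wherever $\phi$ does, so the region $\{\bar p_t^{n-1}(\cdot\<|\<s,a)=0\}$ is harmless), and that the static integral $C_\xi$ is genuinely finite. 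The heavy-tail hypothesis Assumption \ref{ass:phisupport}(ii) is what ultimately keeps the likelihood ratio $L_{t+1}^n$ under control, though for this lemma only finiteness of $C_\xi$ --- not a uniform bound on $\bar p_t^{n-1}(\cdot\<|\<s,a)^{-1}$ --- is required.

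For the final step, since $\tilde\beta_t^{n-1}$ is $\mathcal G_t^{n-1}\subseteq\mathcal G_{t+1}^{n-1}$-measurable and the summands are nonnegative, Tonelli reduces the lemma to $\sum_n\mathbf{E}\bigl[\tilde\beta_t^{n-1}\Delta^{n-1}\bigr]<\infty$. By Cauchy--Schwarz, $\mathbf{E}[\tilde\beta_t^{n-1}\Delta^{n-1}]\le\bigl(\mathbf{E}[(\tilde\beta_t^{n-1})^2]\bigr)^{1/2}\bigl(\mathbf{E}[(\Delta^{n-1})^2]\bigr)^{1/2}$; Assumption \ref{ass:stepbeta}(iii) gives $\mathbf{E}[(\tilde\beta_t^{n-1})^2]^{1/2}=\mathcal O(n^{-(1+\epsilon)/2})$, and since $\|\cdot\|_\infty\le\|\cdot\|_2$ the $\mathcal O(1/n)$ rates of Theorem \ref{thm:convrate2} for $\bar Q_{t+1}^{n-1}$ and for each $\bar u_t^{i,n-1}$ give $\mathbf{E}[(\Delta^{n-1})^2]^{1/2}=\mathcal O(n^{-1/2})$. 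Hence $\mathbf{E}[\tilde\beta_t^{n-1}\Delta^{n-1}]=\mathcal O(n^{-1-\epsilon/2})$, which is summable because $\epsilon>0$ (the finitely many initial terms, where the rate bounds hold only for $n\ge2$, contribute a finite amount since all iterates lie in compact sets), and the lemma follows.
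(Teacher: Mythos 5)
Your proof is correct and takes essentially the same route as the paper's: bound $\bigl||H_t^*|-|H_t^n|\bigr|$ via the Lipschitz property of $\Phi$ together with the min--max estimate (\ref{eq:minmax}), invoke the $\mathcal O(1/n)$ rate of Theorem \ref{thm:convrate2}, and combine with Assumption \ref{ass:stepbeta}(iii) through Cauchy--Schwarz and monotone convergence/Tonelli. The only real difference is cosmetic: you cancel the likelihood-ratio factor $\bar p_t^{n-1}(w)^{-1}$ against the sampling density inside the conditional expectation and apply Cauchy--Schwarz to the pair $(\tilde\beta_t^{n-1},\Delta^{n-1})$, whereas the paper bounds the factor $\phi(w)\,p_t(w)\,p^u(w)\,\bar p_t^{n-1}(w)^{-1}$ pointwise (citing the compact support of $p^u$) and applies Cauchy--Schwarz to $(\tilde\beta_t^{n-1},\|\xi_{t+1}^{h,n}(s,a)\|_2)$ --- if anything, your handling of the likelihood ratio is the more careful of the two.
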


\begin{proof}

For simplicity, let $w = W_{t+1}^{q,n}$ and $P = \phi(w) \,p_t(w) \, p^u(w) \, \bar{p}_t^{n-1}(w\<|\<s,a)^{-1}$. We notice that
\begin{align*}
\bigl\|\xi_{t+1}^{h,n}(s,a)\bigr\|_2 &\le \|P\|_2 \,\bigl|H_t^{*}(w\<|\<s,a) - H_t^{n}(w\<|\<s,a)\bigr|\\
&\le \|P\|_2\, L_\Phi \, \bigl\| \bar{Q}^{n-1}_{t+1} -Q_{t+1}^* \bigr \|_2 + \|P\|_2 \, L_\Phi \,  \sum_i \bigl\|  \bar{u}^{i,n-1}_{t} - u_t^{i,*}\bigr\|_2,
\end{align*}
where the second inequality follows from the same steps used in deriving (\ref{eq:xibound2}) and then applying the monotonicity of $l_p$ norms. 
Squaring, applying the inequality $2ab \le a^2 \kappa + b^2/\kappa$ for any $\kappa > 0$ to the cross terms, and using the fact that each component of $P$ is bounded (due to $p^u$), we can write
\begin{equation}
\bigl\|\xi_{t+1}^{h,n}(s,a)\bigr\|_2^2 \le  C_Q \, \bigl\| \bar{Q}^{n-1}_{t+1} -Q_{t+1}^* \bigr \|^2_2 + \sum_i C_{u,i}\, \bigl\|  \bar{u}^{i,n-1}_{t} - u_t^{i,*}\bigr\|^2_2,
\label{eq:xi2norm}
\end{equation}
for some constants $C_Q, C_{u,1}, \ldots, C_{u,m} \ge 0$. Taking expectation of (\ref{eq:xi2norm}) and using the convergence rate result of Theorem \ref{thm:convrate2}, we see that $\mathbf{E} \bigl[\|\xi_{t+1}^{h,n}(s,a)\|_2^2 \bigr]\le \mathcal O(1/n)$. Now, by Cauchy-Schwarz and Assumption \ref{ass:stepbeta}(iii),
\begin{align*}
\mathbf{E} \Bigl[ \tilde{\beta}_t^{n-1} \,  \mathbf{E} \bigl[ \| \xi_{t+1}^{h,n}(s,a) \|_2 \, | \, \mathcal G_{t+1}^{n-1} \bigr] \Bigr] &= \mathbf{E} \Bigl[ \tilde{\beta}_t^{n-1} \,  \bigl\|\xi_{t+1}^{h,n}(s,a)\bigr\|_2 \Bigr] \\
&\le \sqrt{\mathbf{E} \bigl[(\tilde{\beta}_t^{n-1})^2\bigr] \, \, \mathbf{E} \bigl[\|\xi_{t+1}^{h,n}(s,a)\|_2^2\bigl]} \le \mathcal O (n^{-1-\epsilon/2}),
\end{align*}
so it is clear that the terms $\mathbf{E} \Bigl[ \tilde{\beta}_t^{n-1} \,  \mathbf{E} \bigl[ \| \xi_{t+1}^{h,n}(s,a) \|_2 \, | \, \mathcal G_{t+1}^{n-1} \bigr] \Bigr]$ are summable. By the monotone convergence theorem, we conclude
\[
\mathbf{E} \left[ \sum_{n=1}^\infty \, \tilde{\beta}_t^{n-1} \,  \mathbf{E} \Bigl[ \bigl\| \xi_{t+1}^{h,n}(s,a) \bigr \|_2 \, | \, \mathcal G_{t+1}^{n-1} \Bigr]\right] < \infty,
\]
from which the statement of the lemma follows (notice that the term within the expectation must be finite almost surely).
\end{proof}

\thmrdsconv*
\begin{proof}
First, notice that the convergence of $\bar{Q}^n$ and $\bar{u}^{i,n}$ is (mostly) unaffected by the addition of the sampling procedure. By the principle of importance sampling, the factor of $\diag(L_t^n)$ corrects, in expectation, for the fact that $W_{t+1}^{q,n}$ is sampled from the importance distribution $\bar{p}_t^{n-1}(w\<|\<s,a)$ rather than $p_t(w)$. In addition, Assumption \ref{ass:phisupport} implies that the condition stating that \emph{the second moment of the gradient term is finite}, which is needed for \cite[Theorem 2.4]{Kushner2003}, still holds and therefore the convergence follows.

We now focus on the last part of the theorem and analyze the convergence of the sampling coefficients. Let 
\[
\theta^*_t(s,a) = \Pi_\phi \Bigl[\,\bigl|H_t^{*}(\< \cdot \<|\<s,a) \bigr| \; p_t(\< \cdot \<)\, \Bigr],
\]
and we aim to show $\bar{\theta}_t^n(s,a) \rightarrow \theta_t^*(s,a)$ almost surely. The proof technique is standard and uses a supermartingale convergence argument; see, e.g, \cite[Theorem 5.3]{Pflug1996}, but several aspects need to be adapted for technical reasons in our setting. Throughout this proof, fix a $t$ and $(s,a)$. To simplify notation, we use the shorthand $\bar{p}_t^{n-1}(w) = \bar{p}_t^{n-1}(w|\<s,a)$, $w=W_{t+1}^{q,n} \sim \bar{p}_t^{n-1}(w)$, $H_t^n(w)=H_t^n(w\<|\<s,a)$, and $H_t^*(w) = H_t^*(w\<|\<s,a)$. First, we decompose the gradient term. Define a function $h_t : \mathbb R^K \rightarrow \mathbb R^K$, so that for $\theta \in \mathbb R^K$,
\[
h_t(\theta) = \mathbf{E} \Bigl[\bigl[\theta^\mathsf{T} \phi(w) - |H_t^{*}(w)| \, p_t(w)  \bigr] \, \phi(w) \, p^u(w) \, \bar{p}_t^{n-1}(w)^{-1} \Bigr].
\]
Note that $h_t$ is affine in $\theta$ and is the gradient of the (strictly convex) quadratic objective function in (\ref{eq:projection}), meaning that we can find positive constants $C_{h,1}$ and $C_{h,2}$ such that
\begin{equation}
\bigl\|h_t(\theta)\bigr\|^2_2 \le C_{h,1}\, \bigl\| \theta - \theta_t^{*}(s,a) \bigr \|^2_2 + C_{h,2}.
\label{eq:haffine}
\end{equation}
Next, the noise term $\epsilon_{t+1}^{h,n}(s,a)$ is given by
\[
\epsilon_{t+1}^{h,n}(s,a) =  \bigl[ \bigl(\bar{\theta}^{n-1}_t(s,a)\bigr)^\mathsf{T} \, \phi(w) - \bigl|H_t^{*}(w)\bigr| \, p_t(w)  \bigr] \, \phi(w) \, p^u(w) \, \bar{p}_t^{n-1}(w)^{-1} - h_t\bigl(\bar{\theta}^{n-1}_t(s,a)\bigr),
\]
and it follows that $\mathbf{E} \bigl[ \epsilon_{t+1}^{h,n}(s,a) \, | \, \mathcal G_{t+1}^{n-1} \bigr]=0$.
Using the fact that $h_t\bigl(\bar{\theta}^{n-1}_t(s,a)\bigr)$ is a deterministic affine function of $\bar{\theta}^{n-1}_t(s,a)$, we observe that $\epsilon_{t+1}^{h,n}(s,a)$ can be written in the form $X\, \bigl[\bar{\theta}^{n-1}_t(s,a) - \theta_t^*(s,a) \bigr]+Z$ where $X \in \mathbb R^{K \times K}$ and $Z \in \mathbb R^{K}$ are random. Since $p^{u}(w) = 0$ for $w$ outside of a compact set, the entries of both $X$ and $Z$ are bounded. Let $\|\cdot \|_2$ denote the \emph{operator norm} whenever its argument is a matrix and we have
\begin{equation}
\begin{aligned}
\bigl\|\epsilon_{t+1}^{h,n}(s,a)\bigr\|^2_2 &\le \bigl[ \|X\|_2 \bigl\| \bar{\theta}^{n-1}_t(s,a) - \theta_t^*(s,a) \bigr\|_2 + \|Z\|_2\bigr]^2 \\
&\le C_{\epsilon,1} \bigl\| \bar{\theta}^{n-1}_t(s,a) - \theta_t^*(s,a) \bigr\|_2^2+ C_{\epsilon,2},
\end{aligned}
\label{eq:eps2norm}
\end{equation}
for some constants $C_{\epsilon,1}$ and $C_{\epsilon,2}$. 
We also reproduce the definition of $\xi_{t+1}^{h,n}(s,a)$ given previously:
\[
\xi_{t+1}^{h,n}(s,a) = \Bigr[ \bigl|H_t^{*}(w)\bigr|  - \bigl|H_t^{n}(w)\bigr| \Bigr] \phi(w)\, p_t(w) \, p^u(w) \, \bar{p}_t^{n-1}(w)^{-1}.
\]
Step 7 of Algorithm \ref{alg:dq_rds} can therefore be written
\[
\bar{\theta}^{n}_t(s,a) = \Bigl[ \bar{\theta}^{n-1}_t(s,a) - \beta_t^{n}(s,a) \, \Bigl[h_t\bigl(\bar{\theta}_t^{n-1}(s,a) \bigr)  + \epsilon_{t+1}^{h,n}(s,a)  +   \xi_{t+1}^{h,n}(s,a)  \Bigr]  \Bigr]^+.
\]
For convenience, let us define
\[
A_t^n = \bar{\theta}^{n}_t(s,a) - \theta_t^{*}(s,a) \quad \mbox{and} \quad \hat{h}_t^{n} = h_t\bigl(\bar{\theta}_t^{n-1}(s,a) \bigr)  + \epsilon_{t+1}^{h,n}(s,a)  +   \xi_{t+1}^{h,n}(s,a). 
\]
Hence,
\begin{equation}
\begin{aligned}
\|A_t^n \|_2^2 &\le \bigl\|\bigl[ \bar{\theta}^{n-1}_t(s,a) - \beta_t^{n}(s,a) \, \hat{h}_t^{n}  \bigr]^+ - \bigl[\theta_t^{*}(s,a)\bigr]^+ \bigr\|_2^2\\
&\le \|A_t^{n-1}\|_2^2 - 2 \,\beta_t^n(s,a) \, (A^{n-1}_t)^\mathsf{T} \,\hat{h}_t^{n} + \bigl(\tilde{\beta}_t^{n-1}\bigr)^2 \, \|\hat{h}_t^{n} \|^2_2.
\end{aligned}
\label{eq:Aexpand}
\end{equation}
Taking conditional expectation of the cross term, using Assumption \ref{ass:stepbeta}, noting the inequality $(A^{n-1}_t)^\mathsf{T} \,h_t(\bar{\theta}_t^{n-1}(s,a) ) > 0$ (by strict convexity), and using Cauchy-Schwarz, we get
\begin{align}
\mathbf{E} \bigl[ - 2 \,\beta_t^n(s,a) \, (A^{n-1}_t)^\mathsf{T} \,\hat{h}_t^{n} \, | \, \mathcal G_{t+1}^{n-1} \bigr] &\le  \begin{aligned}[t]- 2\,\varepsilon\, &\tilde{\beta}_t^{n-1} (A^{n-1}_t)^\mathsf{T} \,h_t\bigl(\bar{\theta}_t^{n-1}(s,a) \bigr) \\
&+ 2\,\tilde{\beta}_t^{n-1} \| A_t^{n-1} \|_2 \, \mathbf{E} \bigl[ \| \xi_{t+1}^{h,n}(s,a) \|_2 \, \bigl | \, \mathcal G_{t+1}^{n-1} \bigr]
\end{aligned}
\label{eq:Aexpand2}
\end{align}
Moving onto the third term of (\ref{eq:Aexpand}), we have
\begin{align}
\mathbf{E} \bigl[\|\hat{h}_t^{n} \|^2_2\, | \, \mathcal G_{t+1}^{n-1} \bigr] &\le \begin{aligned}[t]&\Bigl[ \| h_t(\bar{\theta}_t^{n-1}(s,a)) \|_2^2  + \mathbf{E} \bigl[  \|\epsilon_{t+1}^{h,n}(s,a)\|^2_2  + \|\xi_{t+1}^{h,n}(s,a)\|^2_2  \\
 &  + 2\, \|\epsilon_{t+1}^{h,n}(s,a)\|_2\, \|\xi_{t+1}^{h,n}(s,a)\|_2 + 2\,\|h_t(\bar{\theta}_t^{n-1}(s,a)) \|_2\, \|\epsilon_{t+1}^{h,n}(s,a)\|_2  \\
 & + 2\,\|h_t(\bar{\theta}_t^{n-1}(s,a)) \|_2\, \|\xi_{t+1}^{h,n}(s,a)\bigr\|_2 \,  | \, \mathcal G_{t+1}^{n-1} \bigr] \Bigr]
\end{aligned}\nonumber\\
&\le \begin{aligned}[t]& 3\, \Bigl[\mathbf{E} \bigl[  \|\epsilon_{t+1}^{h,n}(s,a)\|^2_2  +  \|\xi_{t+1}^{h,n}(s,a)\|^2_2 \,  | \, \mathcal G_{t+1}^{n-1} \bigr]   +    \|  \,h_t(\bar{\theta}_t^{n-1}(s,a)) \|_2^2 \Bigr] \\
\end{aligned}\nonumber\\
&\le \begin{aligned}[t]&\Bigl[ C_{\hat{h},1} \,\| A_t^{n-1} \|_2^2+ C_{\hat{h},2}+ 3\, \mathbf{E} \bigl[\|\xi_{t+1}^{h,n}(s,a)\|^2_2 \,  | \, \mathcal G_{t+1}^{n-1} \bigr]  \Bigr]
\end{aligned}
\label{eq:Aexpand3}
\end{align}
where the second inequality is due to the inequality $2ab \le a^2+b^2$ and the third inequality, with new constants $C_{\hat{h},1}$ and $C_{\hat{h},2}$, is due to the bounds (\ref{eq:haffine}) and (\ref{eq:eps2norm}). Combining (\ref{eq:Aexpand}), (\ref{eq:Aexpand2}), and (\ref{eq:Aexpand3}),
\begin{equation*}
\mathbf{E} \bigl[ \| A_t^n \|_2^2 \,|\,  \mathcal G_{t+1}^{n-1}  \bigr ] \le \|A_t^{n-1}\|_2^2 \, ( 1+\zeta^{n-1}_t) + \mu^{n-1}_t - \nu^{n-1}_t,
\end{equation*}
where
\begin{align*}
\zeta^{n-1}_t &= C_{\hat{h},1}\,\bigl(\tilde{\beta}_t^{n-1}\bigr)^2 + 2\,\tilde{\beta}_t^{n-1} \,\mathbf{E} \bigl[ \| \xi_{t+1}^{h,n}(s,a) \|_2 \,  | \, \mathcal G_{t+1}^{n-1} \bigr],\\
\mu_t^{n-1} &= 3\,\bigl(\tilde{\beta}_t^{n-1}\bigr)^2\, \mathbf{E} \bigl[\|\xi_{t+1}^{h,n}(s,a)\|^2_2 \,  | \, \mathcal G_{t+1}^{n-1} \bigr] +  C_{\hat{h},2} \,\bigl(\tilde{\beta}_t^{n-1}\bigr)^2,\\
\nu_t^{n-1} &= 2\,\varepsilon\, \tilde{\beta}_t^{n-1} (A^{n-1}_t)^\mathsf{T} \,h_t\bigl(\bar{\theta}_t^{n-1}(s,a) \bigr).
\end{align*}
Note that $\sum_{n=1}^\infty \bigl(\tilde{\beta}_t^{n-1}\bigr)^2 < \infty$ almost surely by Assumption \ref{ass:stepbeta}(iii) and the monotone convergence theorem (we can apply the same logic as Lemma \ref{lem:betaxi_sum}). This, together with Lemma \ref{lem:betaxi_sum}, allows us to conclude that $\sum_{n=1}^\infty \zeta_t^{n-1} < \infty$ almost surely. Moreover, since $\tilde{\beta}_t^{n-1}$ must converge to zero and (\ref{eq:xi2norm}) implies $\mathbf{E} \bigl[\|\xi_{t+1}^{h,n}(s,a)\|^2_2 \,  | \, \mathcal G_{t+1}^{n-1} \bigr]$ converges to zero, we can see that $\sum_{n=1}^\infty \mu_t^{n-1} < \infty$ almost surely. The well-known supermartingale convergence lemma of \cite{Robbins1971} tells us that $\|A_t^n\|_2$ converges and $\sum_{n=1}^\infty \nu_t^{n-1} < \infty$ almost surely. Let $D_\delta = \bigl\{\lim_{n\to\infty}\|A_t^n\|_2 >\delta\bigr\}$. On the event $D_\delta$, using strict convexity, we know that $(A^{n-1}_t)^\mathsf{T} \,h_t\bigl(\bar{\theta}_t^{n-1}(s,a) \bigr)$ is positive and bounded away from zero for any $n$. Along with $\sum_{n=1}^\infty \tilde{\beta}_t^{n-1} = \infty$, this implies $\sum_{n=1}^\infty \nu_t^{n-1} = \infty$, which in turn shows us that $D_\delta$ must occur with probability zero for any $\delta > 0$.
\end{proof}

\clearpage
\bibliographystyle{abbrvnat}
\bibliography{/Users/drjiang/Documents/Dropbox/Pittsburgh/Bibtex/Risk,/Users/drjiang/Documents/Dropbox/Pittsburgh/Bibtex/Bib}

\end{document}